\documentclass[11pt]{amsart}
\usepackage{mathrsfs} \textwidth=14.5cm \oddsidemargin=1cm
\evensidemargin=1cm
\usepackage{amsmath}
\usepackage{amsxtra}
\usepackage{amscd}
\usepackage{amsthm, color}
\usepackage{amsfonts}
\usepackage{amssymb}
\usepackage{eucal}
\usepackage{mathrsfs}

\input prepictex
\input pictex
\input postpictex


\newtheorem{thm}{Theorem}[section]
\newtheorem{cor}[thm]{Corollary}
\newtheorem{lem}[thm]{Lemma}
\newtheorem{prop}[thm]{Proposition}

\theoremstyle{definition}

\newtheorem{example}[thm]{Example}

\theoremstyle{remark}
\newtheorem{rem}[thm]{Remark}

\numberwithin{equation}{section}

\begin{document}

\newcommand{\thmref}[1]{Theorem~\ref{#1}}
\newcommand{\secref}[1]{Section~\ref{#1}}
\newcommand{\lemref}[1]{Lemma~\ref{#1}}
\newcommand{\propref}[1]{Proposition~\ref{#1}}
\newcommand{\corref}[1]{Corollary~\ref{#1}}
\newcommand{\remref}[1]{Remark~\ref{#1}}
\newcommand{\eqnref}[1]{(\ref{#1})}
\newcommand{\exref}[1]{Example~\ref{#1}}

\newcommand{\nc}{\newcommand}
\nc{\Z}{{\mathbb Z}} \nc{\C}{{\mathbb C}} \nc{\N}{{\mathbb N}}
\nc{\F}{{\mf F}} \nc{\Q}{\mathbb{Q}} \nc{\la}{\lambda}
\nc{\ep}{\epsilon} \nc{\h}{\mathfrak h} \nc{\n}{\mf n} \nc{\A}{{\mf
a}} \nc{\G}{{\mathfrak g}} \nc{\SG}{\overline{\mathfrak g}}
\nc{\DG}{\widetilde{\mathfrak g}} \nc{\D}{\mc D} \nc{\Li}{{\mc L}}
\nc{\La}{\Lambda} \nc{\is}{{\mathbf i}} \nc{\V}{\mf V}
\nc{\bi}{\bibitem} \nc{\NS}{\mf N}
\nc{\dt}{\mathord{\hbox{${\frac{d}{d t}}$}}} \nc{\E}{\mc E}
\nc{\ba}{\tilde{\pa}} \nc{\half}{\frac{1}{2}} \nc{\mc}{\mathcal}
\nc{\mf}{\mathfrak} \nc{\hf}{\frac{1}{2}}
\nc{\hgl}{\widehat{\mathfrak{gl}}} \nc{\gl}{{\mathfrak{gl}}}
\nc{\hz}{\hf+\Z} \nc{\vac}{|0 \rangle}
\nc{\dinfty}{{\infty\vert\infty}} \nc{\SLa}{\overline{\Lambda}}
\nc{\SF}{\overline{\mathfrak F}} \nc{\SP}{\overline{\mathcal P}}
\nc{\U}{\mathfrak u} \nc{\SU}{\overline{\mathfrak u}}
\nc{\ov}{\overline} \nc{\wt}{\widetilde} \nc{\sL}{\ov{\mf{l}}}
\nc{\sP}{\ov{\mf{p}}} \nc{\osp}{\mf{osp}} \nc{\sdeg}{\ov{\rm deg}}
\nc{\spo}{\mf{spo}} \nc{\hosp}{\widehat{\mf{osp}}}
\nc{\hspo}{\widehat{\mf{spo}}}

 \nc{\UU}{\mathscr{U}}
 \nc{\lbr}{[\![}
  \nc{\rbr}{]\!]}
  \nc{\VV}{\mathbb V}
 \nc{\WW}{\mathbb W}

\advance\headheight by 2pt

\title{ An inversion formula for some Fock spaces}

\author[Bintao Cao]{Bintao Cao$^\dagger$}
\thanks{$^\dagger$Partially supported by NSFC (Grant No. 11101436, 11571374)}
\address{School of Mathematics and Computational Science, Sun
Yat-sen University, Guangzhou, China 510275}
\email{caobt@mail.sysu.edu.cn}

\author[Ngau Lam]{Ngau Lam$^{\dagger\dagger}$}
\thanks{$^{\dagger\dagger}$Partially supported by an NSC-grant}
\address{Department of Mathematics, National Cheng-Kung University, Tainan, Taiwan 70101}
\email{nlam@mail.ncku.edu.tw}

\begin{abstract} \vspace{.3cm}
 A symmetric bilinear form on a certain subspace $\widehat{\mathbb T}^{\bf b}$ of a completion of the Fock  space $\mathbb T^{{\bf b}}$ is defined. The canonical and dual canonical bases of $\widehat{\mathbb T}^{\bf b}$ are dual with respect to the bilinear form.  As a consequence, the inversion formula connecting the coefficients of the canonical basis and that of the dual canonical basis of  $\widehat{\mathbb T}^{\bf b}$ expanded in terms of the standard monomial basis of $\mathbb T^{{\bf b}}$ is obtained.  Combining with the Brundan's algorithm for
computing  the elements in the canonical basis of
$\widehat{\mathbb{T}}^{{\bf b}_{\mathrm{st}}}$, we have an algorithm
computing the elements in the canonical basis of
$\widehat{\mathbb{T}}^{{\bf b}}$ for arbitrary ${\bf b}$.
\end{abstract}
\maketitle

\section{Introduction}  In his seminal paper \cite{Br} Brundan defined a symmetric bilinear form on the completion $\widehat{\mathbb T}^{{\bf b}_{\text{st}}}$  of the Fock  space $\mathbb T^{{\bf b}_{\text{st}}}:=\WW^{\otimes n}\otimes \VV^{\otimes m}$ showing that
the canonical and dual canonical bases of Lusztig and Kashiwara
\cite{Lu1, Ka} in $\widehat{\mathbb T}^{{\bf b}_{\text{st}}}$ are
dual with respect to the bilinear form, where $\VV$ is the natural
module of the quantum group $U_q({\mathfrak g\mathfrak l}_\infty)$
and $\WW$ is the restricted dual of $\VV$. As a consequence, the
inversion formula is obtained connecting the coefficients of the
canonical basis and that of the dual canonical basis of $\widehat{\mathbb T}^{{\bf b}_{\text{st}}}$ expanded in terms of the
standard monomial basis of $\mathbb T^{{\bf b}_{\text{st}}}$. For $n=0$,
we have $\widehat{\mathbb T}^{{\bf b}_{\text{st}}}= \mathbb T^{{\bf
b}_{\text{st}}}= \VV^{\otimes m}$ \cite{Br}, and the canonical and
dual canonical bases of $\VV^{\otimes m}$ can be identified with the
canonical and dual canonical bases of some modules of some Hecke
algebra of Type $A$ \cite{FKK} (see also \cite{Br3}). In \cite{Br3}, there is an elegant
proof showing that the inversion formula obtained in \cite{Br} for
$n=0$  equals the  inversion formula for relative
Kazhdan-Lusztig polynomials obtained in \cite{Do} (see also
\cite{So,KL}).

The Fock space $\mathbb T^{\bf b}$, which is a $q$-tensor space with
$m$ tensor factors isomorphic to $\VV$ and $n$ factors isomorphic to
$\WW$, determined by the $0^m1^n$-sequence $\bf b$ (see
\eqref{Bruhat} for a precise definition), was considered in
\cite{CLW2}. In analogy to Brundan's results, the canonical and dual
canonical bases  are defined in \cite{CLW2} in the subspace
$\widehat{\mathbb T}^{{\bf b}}$ of a completion of the Fock  space
$\mathbb T^{{\bf b}}$. In fact, the canonical basis is contained in
${\mathbb T}^{\bf b}$ (see Proposition~\ref{prop:can-finite} below).
In this paper, we generalize Brundan's definition defining  a
symmetric bilinear form on ${\widehat{\mathbb T}}^{{\bf b}}$  for
arbitrary $0^m1^n$-sequence ${\bf b}$ and show that the canonical
and dual canonical bases are dual with respect to the bilinear form.
The inversion formula  connecting the coefficients of the canonical
basis and that of the dual canonical basis of $\widehat{\mathbb T}^{{\bf b}}$ expanded in the standard
monomial basis of $\mathbb T^{{\bf b}}$ is obtained.  An equivalent version of the inversion formula has been obtained by Brundan, Losev and Webster \cite[Section 5.9]{BLW} via the garded tensor product categorifications. Let $t^{\bf b}_{gf}(q)$, called the Brundan-Kazhdan-Lusztig polynomial, denote the coefficient of the element $M_g^{\bf b}$  of the standard monomial basis in the expression for the element $T_f ^{\bf b}$ of the canonical basis. Using the
Brundan-Kazhdan-Lusztig conjecture proved in \cite{CLW2, BLW} and the positivity of the coefficients of the Brundan-Kazhdan-Lusztig polynomials proved in \cite{BLW, CLW2} (see also
Remark~\ref{positive} below), we show that there are only finitely
many $t^{{\bf b}}_{gf}(q)\not=0$ for a fixed $g$
or a fixed $f$. In particular, the canonical
basis $\{T^{\bf b}_f \}$ is contained in
$\mathbb{T}^{{\bf b}}$ (see Proposition~\ref{prop:can-finite} below). Also every element in the standard monomial basis of $\mathbb T^{{\bf b}}$ can be written as a finite sum of the elements in the dual canonical basis (see Proposition~\ref{prop:can-finite} below).

Let ${ \mathscr{E}}^{{\bf b}}$ (see \eqnref{def:q-wedge E} below) denote a $q$-wedge subspace of ${\mathbb T}^{{\bf b}}$ and let $\widehat{ \mathscr{E}}^{{\bf b}}$ (see Section~\ref{Sec:Can E} below for details) denote a certain subspace of ${\widehat{\mathbb T}}^{{\bf b}}$.
In Section 4, we define a symmetric bilinear form on $\widehat{ \mathscr{E}}^{{\bf b}}$ and showing that the canonical
basis and the dual canonical basis are dual with respect to the symmetric bilinear form. An inversion formula is also obtained.

In Section~5 we explain a method to compute the elements in the canonical
basis of $\widehat{\mathbb{T}}^{{\bf b}}$ from knowing the precise
expressions of the elements in the canonical basis of
$\widehat{\mathbb{T}}^{{\bf b}_\text{st}}$ in terms of standard
monomial basis. Combining with the Brundan's algorithm for
computing  the elements in the canonical basis of
$\widehat{\mathbb{T}}^{{\bf b}_{\mathrm{st}}}$ \cite[Section 2-j]{Br} (see also \cite[Section 3]{Br4}), we have an algorithm
computing the elements in the canonical basis of
$\widehat{\mathbb{T}}^{{\bf b}}$ for
arbitrary $0^m1^n$-sequence ${\bf b}$. We remark that the Brundan-Kazhdan-Lusztig polynomials arising as coefficients of the canonical bases also can  be computed as the approach implied by \cite{BLW} by truncating to finite parabolic Kazhdan-Lusztig polynomials and then by applying the classical algorithm for parabolic Kazhdan-Lusztig polynomials.

The paper is organized as follows. In Section 2 we review and develop some basic
results about the space $\widehat{\mathbb T}^{{\bf b}}$, the bar involution $\psi$ on $\widehat{\mathbb T}^{{\bf b}}$, and the canonical and dual canonical bases of $\widehat{\mathbb T}^{{\bf b}}$. In Section 3, we define a bilinear form on
${\widehat{\mathbb T}}^{{\bf b}}$  for arbitrary ${\bf b}$ and prove that
the bilinear form is symmetric. We show that the canonical and dual
canonical bases are dual with respect to the symmetric bilinear form. The
inversion formula is also described. In Section 4, we prove that the
similar results obtained in Section 3 are valid for the space
$\mathscr{E}^{\bf b}$. We explain the algorithm in Section 5.

\smallskip\smallskip
\noindent {\it Notations:} We shall use the following notations throughout this article. The symbols $\Z$,
$\N$, and $\Z_+$ stand for the sets of all integers, of positive integers and of non-negative integers, respectively.  For any subset $I\subseteq\Z$, denote the set of integer-valued functions on $I$ by $\Z^I$. For $r\in \N$, let $\lbr r\rbr:=\{1,2,\ldots,r\}$. $\Z^{\lbr r\rbr }$  is simply denoted by $\Z^{r}$ and each $f\in\Z^{r}$ is identified with the
$r$-tuple $\left(f(1),f(2),\ldots,f(r)\right)$ when convenient. Let $\Z^{r}_k$ denote the subset of $\Z^{r}$ consisting functions with values between $-k$ and $k$, where $k$ is  a positive integer.
For $f\in \Z^J$  and a subset $I\subseteq J$, we denote the restriction of $f$
to $I$ by $f_I$.

\section{Preliminaries}
\label{sec:Pre}

We will follow the notations and conventions as described in
\cite{CLW2}.  In this section, we review some definitions and
results about the canonical and dual canonical bases of the
completion $\widehat{{\mathbb T}}^{\bf b}$ of the Fock space
${\mathbb T}^{\bf b}$ obtained in \cite{CLW2} generalizing \cite{Br}.  We refer the reader to \cite[Part I]{CLW2}
for details. We also generalize some results in \cite{Br}  for the settings of \cite{CLW2}.

\subsection{Quantum group}\label{Qgp}

The quantum group $\UU:=U_q({\mathfrak g\mathfrak l}_\infty)$  with
an indeterminate $q$  is defined to be the associative algebra over the field
$\Q(q)$ of rational functions generated by $E_a, F_a, K_a, K^{-1}_a, a \in \Z$, subject to
the following relations ($a,b\in\Z$):
\begin{eqnarray*}
 K_a K_a^{-1} &=& K_a^{-1} K_a =1, \\
 K_a K_b &=& K_b K_a, \\
 K_a E_b K_a^{-1} &=& q^{\delta_{a,b} -\delta_{a,b+1}} E_b, \\
 K_a F_b K_a^{-1} &=& q^{\delta_{a,b+1}-\delta_{a,b}}
 F_b, \\
 E_a F_b -F_b E_a &=& \delta_{a,b} \frac{K_{a,a+1}
 -K_{a+1,a}}{q-q^{-1}}, \\
 E_a^2 E_b +E_b E_a^2 &=& (q+q^{-1}) E_a E_b E_a,  \quad \text{if } |a-b|=1, \\
 E_a E_b &=& E_b E_a,  \,\qquad\qquad\qquad \text{if } |a-b|>1, \\
 F_a^2 F_b +F_b F_a^2 &=& (q+q^{-1}) F_a F_b F_a,  \quad\, \text{if } |a-b|=1,\\
 F_a F_b &=& F_b F_a,  \qquad\ \qquad\qquad \text{if } |a-b|>1.
\end{eqnarray*}
Here $K_{a,a+1} :=K_aK_{a+1}^{-1}$. The co-multiplication $\Delta$
on $\UU$ is defined by:
\begin{eqnarray*}
 \Delta (E_a) &=& 1 \otimes E_a + E_a \otimes K_{a+1, a}, \\
 \Delta (F_a) &=& F_a \otimes 1 +  K_{a, a+1} \otimes F_a,\\
 \Delta (K_a) &=& K_a \otimes K_{ a}.
\end{eqnarray*}
The co-multiplication $\Delta$ here is consistent with the one used
by Kashiwara, but differs from \cite{Lu}. The counit $\epsilon$  is
defined by $\epsilon(E_a) =\epsilon(F_a) = 0$, $\epsilon(K_a) = 1$,
the antipode $S$ by $S(E_a) = -E_aK_{a,a+1}$, $S(F_a) =
-K_{a+1,a}F_a$, $S(K_a) = K_a^{-1}$. For $r\ge 1$, the divided
powers are defined by $E_a^{(r)} =E_a^{r}/[r]!$ and
$F_a^{(r)}=F_a^{r}/[r]!$, where $[r] =(q^r-q^{-r})/(q-q^{-1})$ and
$[r]!=[1][2]\cdots [r].$

In what follows we shall apply results from \cite{Lu} and
\cite{Jan}. To translate their results to our settings, we need to
replace $E_a$ with $F_a$ and $q$ therein by $F_a$, $E_a$ and
$q^{-1}$ for all $a\in\Z$, respectively, in order to match our
co-multiplication with theirs.

Setting $\ov{q}=q^{-1}$ induces an automorphism on $\Q(q)$ denoted
by $^{-}$. An {\em antilinear} map $f : V \longrightarrow W$ between
$\Q(q)$-vector spaces means that $f$ is a $\Q$-linear map such that
$f({cu})=\ov{c}f(u)$, for $c\in\Q(q)$ and $u\in V$. Define the bar
involution on $U_q({\mathfrak g\mathfrak l}_\infty)$ to be the
antilinear automorphism ${}^-: \UU\rightarrow \UU$ determined by
$\ov{E_a}= E_a$, $\ov{F_a}=F_a$, and $\ov{K_a}=K_a^{-1}$.

 For each
$k\in\mathbb{N}$, let $\UU_k:=U_q(\mathfrak{gl}_{|k|})$ denote the
subalgebra of $\UU$ generated by $\{E_a,
F_a,K^{\pm1}_a,K^{\pm1}_{a+1}\}$ for $-k\leq a\leq k-1$. Then
$\UU_k\subseteq \UU_{k+1}$ and $\bigcup_k \UU_k=\UU$. Let
$\UU^{\pm}_k$ and $\UU^{\pm}$ denote the positive and negative parts
of $\UU_k$ and $\UU$, respectively. Note that
$\UU^{\pm}_k\subseteq\UU^{\pm}_{k+1}$ and
$\bigcup_k\UU^{\pm}_k=\UU^{\pm}$.

Let $\texttt{P}$ denote the free abelian group on basis
$\{\varepsilon_a \,|\, a\in \Z\}$ endowed with a symmetric bilinear
form $(\cdot,\cdot)$ for which $\{\varepsilon_a|\, a\in \Z\}$ forms
an orthonormal basis. Let
$$\Pi=\{\alpha_a:=\varepsilon_a-\varepsilon_{a+1}\mid
a\in\mathbb{Z}\}\quad\text{and}\quad \Pi_k=\{\alpha_a\mid -k\leq
a\leq k-1\}$$
and let
$$
\Z_+\Pi:=\sum_{\alpha\in\Pi}\mathbb{Z}_+\alpha \quad\text{and}\qquad
\Z_+\Pi_k:=\sum_{\alpha\in\Pi_k}\mathbb{Z}_+\alpha.
$$
For $\mu\in \Z_+\Pi$, the corresponding $\mu$-weight space of
$\UU^+$ and $-\mu$-weight space of $\UU^-$ are defined by
$$\UU^\pm_{\mu}:=\{X\in \UU^\pm\mid K_aXK_a^{-1}=q^{(\pm\mu,\varepsilon_a)}X,\, \forall a\in \Z\},$$
respectively. Analogously we define $\mu$-weight space
$\UU^+_{k,\mu}$ of $\UU^+_k$ and $-\mu$-weight space $\UU^-_{k,\mu}$
of $\UU^-_k$ for $\mu\in \Z_+\Pi_k$. Note that
$$
\UU^+_{\mu}=\UU^+_{k,\mu}  \quad\text{and}\quad
\UU^-_{\mu}=\UU^-_{k,\mu} \quad \text{ for all } \mu\in \Z_+\Pi_k.
$$

\subsection{Quasi-$\mathcal{R}$-matrix}
In this subsection, we review an explicit description  of the
quasi-$\mathcal{R}$-matrix $\Theta$ \cite[Section 3.1]{CLW2}(cf.
\cite[Chapter~8]{Jan}). Proposition~\ref{sigma-theta} below will be used to show the bilinear form $\langle\cdot,\cdot\rangle$ on $\widehat{\mathbb{T}}^{\bf b}$ defined in Section~\ref{formula} is symmetric.

For $a\in\Z$, one can define an automorphism $T_a:\UU\rightarrow
\UU$ associated to $\alpha_a=\varepsilon_a-\varepsilon_{a+1}\in\Pi$
as follows  \cite[8.14]{Jan}:
\begin{equation*}\label{def:Ti1}
T_a(K_b)=\begin{cases}
K_{a},&\mbox{if}\ b=a+1,\\
K_{a+1},&\mbox{if}\ b=a,\\
K_b,&\mbox{otherwise};
  \end{cases}
\qquad  T_a(E_b)=\begin{cases}
    E_bE_a-q^{-1}E_aE_b, & \mbox{if}\ |b-a|=1,\\
    -K_{a+1,a}F_a,& \mbox{if}\ b=a,\\
    E_b, & \mbox{otherwise};
  \end{cases}
\end{equation*}
and
\begin{equation*}\label{def:Ti2}
 T_a(F_b)=\begin{cases}
    F_aF_b-qF_bF_a, & \mbox{if}\ |b-a|=1,\\
    -E_aK_{a,a+1},& \mbox{if}\ b=a,\\
    F_b, & \mbox{otherwise}.
     \end{cases}
\end{equation*}

For $k\in\N$, let $\mf{S}_{|k|}$ denote the symmetric group on the
set $\{-k,-k+1,\ldots,0,1,\ldots,k\}$, and let $w_0^{|k|}$ denote
the longest element in $\mf S_{|k|}$. Let
$\mf{S}_\infty:=\bigcup_{k}\mf{S}_{|k|}$ and let $s_a:=(a, a+1)$
denote the simple transposition in $\mf{S}_\infty$ for all $a\in
\Z$.  There exists an infinite sequence of integers
$\{a_i\}_{i=1}^\infty$ such that for each $k$ we have a reduced
expression for $w_0^{|k|}$ as follows \cite[(3.1)]{CLW2}:
\begin{equation}  \label{wn:wn+1}
w^{|k|}_0=s_{a_1}s_{a_2}\cdots s_{a_N},  \quad \text{where}\, N
=k(2k+1).
\end{equation}
 For
$t\in \N$, we define
\begin{equation*}  \label{theta:t}
\Theta_{[t]}:=\sum_{r\ge 0}q^{\hf r(r-1)}\frac{(q-q^{-1})^r}{[r]!}
T_{a_1}\cdots T_{a_{t-1}}(E^r_{a_t})\otimes T_{a_1}\cdots
T_{a_{t-1}}(F^r_{a_t}).
\end{equation*}
There is an explicit description for quasi-$\mc R$-matrix
$\Theta^{(k)}$ of $\UU_k$  as follow  \cite[8.30(2)]{Jan}:
\begin{align}  \label{eq:thetan}
\Theta^{(k)}=
\Theta_{[N]}\cdots\Theta_{[3]}\Theta_{[2]}\Theta_{[1]}\in
\sum_{\mu\in \Z_+\Pi_k}\UU_{k,\mu}^+\otimes\UU_{k,\mu}^-,  \quad
\text{ where } N =k(2k+1).
\end{align}
Actually, the quasi-$\mc R$-matrix $\Theta^{(k)}$ is an element in
some suitable completion of $\UU_k^+\otimes \UU_k^-$. We can write
\begin{align}\label{quasi:R:matrix, k}
\Theta^{(k)}=\sum_{\mu\in \Z_+\Pi_k}\Theta^{(k)}_{\mu},
\qquad\text{and }
\qquad\Theta^{(k)}_{\mu}=\sum_{i=1}^{\rm{dim}_{\Q(q)} \UU_{k,\mu}^+  }b_{\mu,i}'\otimes
b''_{\mu,i}\in \UU_{k,\mu}^+\otimes \UU_{k,\mu}^-,
\end{align}
where $b_{\mu,i}'\in\UU_{k,\mu}^+$ and
$b''_{\mu,i}\in\UU_{k,\mu}^-$. By the explicit description
\eqnref{eq:thetan} for the quasi-$\mc R$-matrix $\Theta^{(k)}$ , we
have \begin{equation*}\label{Theta-mu: stable}
\Theta^{(n)}_{\mu}=\Theta^{(k)}_{\mu}, \qquad\text{for all } n\ge
k\text{ and } \mu\in \Z_+\Pi_k.
\end{equation*}
Therefore we define $\Theta_{\mu}:=\Theta_{\mu}^{(k)}$ for any
$\mu\in \Z_+\Pi$ and $k\gg 0$. The \emph{quasi-$\mc R$-matrix} $\Theta$ for
$\UU$  is defined in \cite[Section 3.1]{CLW2} by
\begin{equation}\label{quasi:R:matrix}
\Theta=\sum_{\mu\in \Z_+\Pi}\Theta_{\mu}.
\end{equation}
Formally, $\Theta$ can be expressed by the infinite product
$\Theta=\cdots\Theta_{[3]}\Theta_{[2]}\Theta_{[1]}$.

Now we define the map $\sigma:\mathbb{Z}\longrightarrow\mathbb{Z}$
by $\sigma(a)=-1-a$ for all $a\in \Z$. Then $\sigma$ induces the map
$\sigma: \Pi\longrightarrow \Pi$ by
$\sigma(\alpha_a)=\alpha_{\sigma(a)}$ for all $\alpha_a\in \Pi$.
Following from \cite[Section 2-f]{Br}, $\sigma$ also induces the
antiautomorphism $\sigma: \UU\longrightarrow \UU$ defined by
\begin{equation*}
  \sigma(E_a)=E_{ \sigma(a)},\quad  \sigma(F_a)=F_{ \sigma(a)},\quad
  \sigma(K_a)=K_{-a},\qquad \text{ for all $a\in \Z$.}
\end{equation*}
Define the antiautomorphism $\widehat{\tau}: \UU\longrightarrow \UU$
by
\begin{equation*}
  \widehat{\tau}(E_a)=E_a,\quad \widehat{\tau}(F_a)=F_a,\quad
\widehat{\tau}(K_a)=K_a^{-1}, \qquad\text{ for all $a\in \Z$.}
\end{equation*}
Here the antiautomorphism $\widehat{\tau}$ is the antiautomorphism
$\tau$ defined in \cite[Lemma 4.6 b)]{Jan}. Note that $\sigma$ and $
\widehat{\tau}$ are involutions on $\UU$.

\begin{lem}\label{hat-tau-theta} For $\mu\in \Z_+\Pi$, we have
 \begin{equation}\label{eq:hat-tau-theta}
\widehat{\tau}\otimes\widehat{\tau}(\Theta_{\mu})=\Theta_{\mu}.
\end{equation}
\end{lem}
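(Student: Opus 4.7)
The strategy is to reduce the identity to the finite-rank quasi-$\mathcal{R}$-matrix $\Theta^{(k)}$ of $\UU_k$ and then exploit the explicit product formula \eqnref{eq:thetan} together with symmetry properties of Lusztig's braid group action. Since $\Theta^{(n)}_\mu = \Theta^{(k)}_\mu$ for all $n\geq k$ with $\mu\in\Z_+\Pi_k$, and since $\widehat{\tau}$ fixes $E_b, F_b$ and conjugates $K_b$ to $K_b^{-1}$ (so that $\widehat{\tau}\otimes\widehat{\tau}$ preserves $\UU^+_\mu\otimes\UU^-_\mu$), it suffices to prove $(\widehat{\tau}\otimes\widehat{\tau})(\Theta^{(k)}) = \Theta^{(k)}$ for each $k$, and then read off the component in weight $\mu$.

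The main ingredient is the compatibility of $\widehat{\tau}$ with Lusztig's braid operators, which I would establish by direct verification on the generators $E_b, F_b, K_b$: the explicit formulas for $T_a$ together with the action of $\widehat{\tau}$ yield an intertwining relation of the form $\widehat{\tau}\circ T_a = T_a^{\ast}\circ\widehat{\tau}$, where $T_a^{\ast}$ is an appropriate companion braid operator from the family $T'_{a,\pm}, T''_{a,\pm}$ considered in \cite[Ch.~8]{Jan}. Now $\widehat{\tau}\otimes\widehat{\tau}$ is an antihomomorphism of $\UU\otimes\UU$; applied to $\Theta^{(k)} = \Theta_{[N]}\cdots\Theta_{[1]}$ it reverses the order of the factors, and after substituting the intertwining relation into each $\Theta_{[t]}$, yields the analogous explicit product formula for $\Theta^{(k)}$ built from the reversed sequence $(a_N,a_{N-1},\ldots,a_1)$. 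Since $w_0^{|k|}$ is an involution, this reversed sequence is itself a reduced expression for $w_0^{|k|}$. By the well-known independence of $\Theta^{(k)}$ from the choice of reduced expression, the reversed product also equals $\Theta^{(k)}$.

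The main obstacle is the precise bookkeeping in the second step: tracking which variant of the Lusztig operator arises after conjugation by $\widehat{\tau}$ and verifying that the $q$-scalars $q^{r(r-1)/2}(q-q^{-1})^r/[r]!$ together with the tensor-factor ordering in each $\Theta_{[t]}$ exactly match those in the reversed-expression version of the formula. A conceptually cleaner alternative is to use the uniqueness characterization of $\Theta$: as the unique element of the relevant completion satisfying $\Theta_0 = 1\otimes 1$ and an intertwining relation of the form $\Theta\cdot\Delta(u) = \bar{\Delta}(u)\cdot\Theta$ for all $u\in\UU$, one checks that $(\widehat{\tau}\otimes\widehat{\tau})(\Theta)$ also satisfies these defining conditions and then concludes by uniqueness, weight component by weight component.
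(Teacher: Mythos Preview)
Your reduction to the finite-rank component $\Theta^{(k)}_\mu$ is exactly what the paper does. After that, however, the paper does not argue further: it simply observes that the resulting identity is \cite[(7.1.2)]{Jan} and stops. The standard proof of that identity in Jantzen's book is precisely your ``cleaner alternative'': one checks that $(\widehat{\tau}\otimes\widehat{\tau})(\Theta^{(k)})$ satisfies the same defining intertwining relation and normalization as $\Theta^{(k)}$, and then invokes uniqueness. So your alternative is not just conceptually cleaner---it is the argument the cited reference actually gives.

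Your primary approach through the explicit product formula \eqnref{eq:thetan} and the braid operators is a genuine detour. It can be made to work, but the bookkeeping you flag is real: one must identify the correct companion operator (in Jantzen's notation, the relevant relation is of the type $\widehat{\tau}\circ T_a = T_a^{-1}\circ\widehat{\tau}$), keep track of how the antiautomorphism reverses both the product of the $\Theta_{[t]}$'s and the internal powers, and then match the outcome with the product formula for the reversed reduced expression of $w_0^{|k|}$. None of this is needed once one has the uniqueness characterization, which is why neither the paper nor \cite{Jan} goes this route. If you keep the write-up, I would lead with the uniqueness argument and drop the braid-operator sketch entirely.
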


\begin{proof}
For $\mu\in \Z_+\Pi$, we choose $k$ large enough such that
$\Theta_{\mu}=\Theta_{\mu}^{(k)}$. Then \eqref{eq:hat-tau-theta} is
just the equality \cite[(7.1.2)]{Jan}.
\end{proof}

\begin{lem}\label{th1}  Let $\sigma':
\UU\longrightarrow \UU$ be the automorphism of  $\UU$ defined by
$\sigma'=\widehat{\tau}\circ\sigma=\sigma\circ\widehat{\tau}$. Then
we have
\begin{itemize}
\item[(i)]
$\sigma'\circ T_a(X)=T_{\sigma(a)}(\sigma'(X))$, for all $X\in \UU$
and $a\in \mathbb{Z}$,
\item[(ii)] $\sigma'(E_a)=E_{\sigma(a)},\ \ \sigma'(F_a)=F_{\sigma(a)}$,
for all $a\in\mathbb{Z}$.
\end{itemize}
\end{lem}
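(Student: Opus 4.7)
The plan is to verify both parts by a direct computation on the generators of $\UU$. For part~(ii), one has $\sigma'(E_a)=\widehat{\tau}(\sigma(E_a))=\widehat{\tau}(E_{\sigma(a)})=E_{\sigma(a)}$ since $\widehat{\tau}$ fixes each $E_b$, and analogously $\sigma'(F_a)=F_{\sigma(a)}$. The parallel check on $K_a$ yields $\widehat{\tau}\circ\sigma(K_a)=\widehat{\tau}(K_{-a})=K_{-a}^{-1}$ and $\sigma\circ\widehat{\tau}(K_a)=\sigma(K_a^{-1})=\sigma(K_a)^{-1}=K_{-a}^{-1}$, using that $\sigma$ is an antiautomorphism. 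This simultaneously confirms that $\widehat{\tau}\circ\sigma=\sigma\circ\widehat{\tau}$, so that $\sigma'$ is a well-defined algebra automorphism (being the composition of two antiautomorphisms).

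For part~(i), the maps $\sigma'\circ T_a$ and $T_{\sigma(a)}\circ\sigma'$ are both algebra automorphisms of $\UU$, hence agree as soon as they agree on the generators $K_b$, $E_b$, $F_b$. For each type of generator I would run through the four cases that appear in the definition of $T_a$: $b=a$, $b=a+1$, $b=a-1$, and $|b-a|>1$. The substitution $\sigma(a)=-1-a$ satisfies $\sigma(b)-\sigma(a)=-(b-a)$, so the case conditions themselves are preserved after applying $\sigma$. For instance, in the case $|b-a|=1$ one has $T_a(E_b)=E_bE_a-q^{-1}E_aE_b$, and applying the algebra automorphism $\sigma'$ yields $E_{\sigma(b)}E_{\sigma(a)}-q^{-1}E_{\sigma(a)}E_{\sigma(b)}$, which is precisely $T_{\sigma(a)}(E_{\sigma(b)})=T_{\sigma(a)}(\sigma'(E_b))$. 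In the case $b=a$ the identity $\sigma'(-K_{a+1,a}F_a)=-K_{\sigma(a),\sigma(a)+1}F_{\sigma(a)}$ follows from the commutativity of the $K_b$'s together with the fact that both $\sigma$ and $\widehat{\tau}$ reverse products, so their composition $\sigma'$ preserves the order of the $F$- and $K$-factors (up to commuting $K$'s). The remaining cases, including those for $F_b$, are entirely analogous.

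The argument is purely verificational; the main (and only) obstacle is the bookkeeping of reorderings when one expands $\sigma'=\widehat{\tau}\circ\sigma$ through a product of generators and then matches the result against the formula for $T_{\sigma(a)}$ after the index substitution $a\mapsto -1-a$. There is no conceptual difficulty beyond recognising that $\sigma$ is a symmetry of the underlying Dynkin diagram, which is what makes the braid operators $T_a$ transform equivariantly under $\sigma'$.
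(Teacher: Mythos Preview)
Your approach is exactly that of the paper: part~(ii) is immediate from the definitions, and for part~(i) one reduces to a check on the generators $E_b,F_b,K_b$ since both $\sigma'\circ T_a$ and $T_{\sigma(a)}\circ\sigma'$ are algebra automorphisms. One small slip in your bookkeeping: in the case $b=a$ the correct identity is $\sigma'(-K_{a+1,a}F_a)=-K_{\sigma(a)+1,\sigma(a)}F_{\sigma(a)}$ (your $K$-indices are transposed), which then matches $T_{\sigma(a)}(E_{\sigma(a)})=-K_{\sigma(a)+1,\sigma(a)}F_{\sigma(a)}$ on the nose.
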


\begin{proof}
The statement (ii) is clear. Since $\sigma'$ and $T_a$ are
automorphisms, it is sufficient to show that the statement (i) holds
for $X=E_b, F_b, K_b$ with $b\in \Z$. Now the  lemma follows by some
straightforward computations.
\end{proof}

The following proposition will be helpful to show that the bilinear form $\langle\cdot,\cdot\rangle$ on $\widehat{\mathbb{T}}^{\bf b}$ defined in Section~\ref{formula} is symmetric.

\begin{prop}\label{sigma-theta} For $\mu\in \Z_+\Pi$, we have
 \begin{equation*}\label{5}
\sigma\otimes\sigma(\Theta_{\mu})=\Theta_{\sigma(\mu)}.
\end{equation*}
\end{prop}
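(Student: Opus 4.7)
The plan is to reduce the identity to a corresponding statement for the algebra automorphism $\sigma'$ and then exploit the product formula \eqref{eq:thetan}. Since $\widehat{\tau}$ is an involution, $\sigma'=\widehat{\tau}\circ\sigma$ gives $\sigma=\widehat{\tau}\circ\sigma'$ and hence
$$
\sigma\otimes\sigma=(\widehat{\tau}\otimes\widehat{\tau})\circ(\sigma'\otimes\sigma').
$$
By \lemref{hat-tau-theta}, $\widehat{\tau}\otimes\widehat{\tau}$ fixes every $\Theta_\nu$, so the desired identity $(\sigma\otimes\sigma)(\Theta_\mu)=\Theta_{\sigma(\mu)}$ reduces to proving $(\sigma'\otimes\sigma')(\Theta_\mu)=\Theta_{\sigma(\mu)}$.

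Next I would fix $k$ large enough that both $\mu$ and $\sigma(\mu)$ lie in $\Z_+\Pi_k$, so that $\Theta_\mu=\Theta^{(k)}_\mu$ and $\Theta_{\sigma(\mu)}=\Theta^{(k)}_{\sigma(\mu)}$. By \lemref{th1}(ii), $\sigma'$ sends $\UU^{\pm}_{k,\nu}$ isomorphically onto $\UU^{\pm}_{k,\sigma(\nu)}$, so $\sigma'\otimes\sigma'$ carries the $\nu$-weight component of any element of a suitable completion of $\UU_k^+\otimes\UU_k^-$ to its $\sigma(\nu)$-weight component. Hence the reduced claim will follow from the global identity $(\sigma'\otimes\sigma')(\Theta^{(k)})=\Theta^{(k)}$.

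To establish this I compute the left-hand side directly from \eqref{eq:thetan}. Since $\sigma'$ is an algebra automorphism, $\sigma'\otimes\sigma'$ respects the product in $\UU\otimes\UU$, and therefore
$$
(\sigma'\otimes\sigma')(\Theta^{(k)})=(\sigma'\otimes\sigma')(\Theta_{[N]})\cdots(\sigma'\otimes\sigma')(\Theta_{[1]}).
$$
Iterating \lemref{th1}(i) gives $\sigma'\circ T_{a_1}\cdots T_{a_{t-1}}=T_{\sigma(a_1)}\cdots T_{\sigma(a_{t-1})}\circ\sigma'$, and combined with \lemref{th1}(ii), which yields $\sigma'(E_{a_t}^r)=E_{\sigma(a_t)}^r$ and $\sigma'(F_{a_t}^r)=F_{\sigma(a_t)}^r$, substitution into the definition of $\Theta_{[t]}$ shows that $(\sigma'\otimes\sigma')(\Theta_{[t]})$ is the same expression with the sequence $(a_1,\dots,a_t)$ replaced by $(\sigma(a_1),\dots,\sigma(a_t))$. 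Consequently $(\sigma'\otimes\sigma')(\Theta^{(k)})$ is precisely the quasi-$\mathcal{R}$-matrix of $\UU_k$ associated with the word $s_{\sigma(a_1)}\cdots s_{\sigma(a_N)}$.

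The crux, and the step I expect to be the main obstacle, is to identify this with $\Theta^{(k)}$. Two inputs are needed: first, that $s_{\sigma(a_1)}\cdots s_{\sigma(a_N)}$ is still a reduced expression for $w_0^{|k|}$, which holds because $s_a\mapsto s_{\sigma(a)}$ preserves the Coxeter relations (since $|\sigma(a)-\sigma(b)|=|a-b|$) and therefore induces a Coxeter automorphism of $\mf{S}_{|k|}$ that must fix the longest element; and second, that $\Theta^{(k)}$ is independent of the chosen reduced expression of $w_0^{|k|}$, which is the standard uniqueness property of the quasi-$\mathcal{R}$-matrix (\cite[Chapter 8]{Jan}). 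Once these two inputs are in place, $(\sigma'\otimes\sigma')(\Theta^{(k)})=\Theta^{(k)}$, and the proposition follows by passing to weight components.
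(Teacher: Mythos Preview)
Your proof is correct and follows essentially the same route as the paper's: reduce from $\sigma$ to $\sigma'$ via \lemref{hat-tau-theta}, use \lemref{th1} to compute $(\sigma'\otimes\sigma')(\Theta_{[t]})$ as the corresponding factor for the word $s_{\sigma(a_1)}\cdots s_{\sigma(a_N)}$, and then invoke that this is again a reduced expression of $w_0^{|k|}$ together with independence of $\Theta^{(k)}$ from the choice of reduced expression. Your write-up is in fact slightly more explicit than the paper's on why the $\sigma$-transformed word is still reduced for $w_0^{|k|}$ and on the passage to weight components.
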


\begin{proof}
By \eqnref{wn:wn+1}, the longest element $w_0^{|k|}$ in $\mathfrak{S}_{|k|}$  has a
reduced expression  $w_0^{|k|}=s_{a_1}s_{a_2}\cdots s_{a_N}$ and
hence the product $s_{\sigma(a_1)}s_{\sigma(a_2)}\cdots
s_{\sigma(a_N)}$ is also a reduced expression of $w_0^{|k|}$. Let
\begin{equation*}\label{4}
  \Theta_{[t],\sigma}:=\sum_{r\geq0}q^{\hf r(r-1)}\frac{(q-q^{-1})^r}{[r]!}T_{\sigma({a_1})}\cdots
  T_{\sigma(a_{t-1})}(E^r_{\sigma(a_t)})\otimes
   T_{\sigma({a_1})}\cdots T_{\sigma(a_{t-1})}(F^r_{\sigma(a_t)}).
\end{equation*}
By Lemma~\ref{th1}, we have
$\sigma'\otimes\sigma'(\Theta_{[t]})=\Theta_{[t],\sigma}$. Using \eqnref{eq:thetan} and  the fact that
product $s_{\sigma(a_1)}s_{\sigma(a_2)}\cdots s_{\sigma(a_N)}$ is a
reduced expression of $w_0^{|k|}$, we have $\sigma'\otimes\sigma'(\Theta^{(k)})=\Theta^{(k)}$. This implies
$\sigma'\otimes\sigma'(\Theta_{\mu})=\Theta_{\sigma(\mu)}$. By
Lemma~\ref{hat-tau-theta}, we have
$\sigma\otimes\sigma(\Theta_{\mu})=\sigma'\otimes\sigma'(\Theta_{\mu})=\Theta_{\sigma(\mu)}$.
\end{proof}

\subsection{Bruhat ordering}\label{Bruhat}
In this subsection, we review the Bruhat orderings $\preceq_{\bf b}$ on $\Z^{m+n}$ defined in \cite[Section 2.3]{CLW2} for any $0^m1^n$-sequence ${\bf b}$. For $r\in \N$,  let $\lbr r\rbr:=\{1,2,\ldots,r\}$ and let $\Z^{r}$ denote the set of
integer-valued functions on $\lbr r\rbr$. Each $f\in\Z^{r}$ is identified with the
$r$-tuple $\left(f(1),f(2),\ldots,f(r)\right)$ when convenient.

Recall that $\texttt{P}$ denote the free abelian group with
orthonormal basis $\{\varepsilon_r\vert r\in\Z\}$ with respect to a
bilinear form $(\cdot,\cdot)$ defined in Section~\ref{Qgp}. A partial order on $\texttt{P}$
defined by declaring $\nu\ge\mu$, for $\nu,\mu\in \texttt{P}$, if
$\nu-\mu$ is a non-negative integral linear combination of
$\varepsilon_r-\varepsilon_{r+1}$, $r \in \Z$.

For $m,n\in\Z_+$, a {\em $0^m1^n$-sequence} is a sequence ${\bf
b}=(b_1,b_2,\ldots,b_{m+n})$ of $m+n$ integers such that $m$ of the
$b_i$'s are equal to ${0}$ and $n$ of them are equal to ${1}$.
Fix a ${0^m1^n}$-sequence ${\bf b}=(b_1,b_2,\ldots,b_{m+n})$. For
$f\in\Z^{m+n}$ and $j\le m+n$, let
\begin{align*}
\text{wt}^j_{\bf b}(f)
 :=\sum_{j\le i}(-1)^{b_i}\varepsilon_{f(i)}\in \texttt{P},\quad
\text{wt}_{\bf b}(f):=\text{wt}^{1}_{\bf b}(f)\in \texttt{P}.
\end{align*}
Then the {\em Bruhat ordering of type ${\bf b}$} on $\Z^{m+n}$,
denoted by $\preceq_{\bf b}$, in terms of the partially ordered set
$(\texttt{P}, \leq)$ is defined in \cite[Section 2.3]{CLW2} as
follows: $g\preceq_{\bf b}f$ if and only if $\text{wt}_{\bf
b}(g)=\text{wt}_{\bf b}(f)$ and $\text{wt}^j_{\bf
b}(g)\le\text{wt}^j_{\bf b}(f)$, for all $j$. This is simply the
usual Bruhat ordering on the weight lattice $\Z^m$ of $\gl(m)$ if
$n=0$. The following lemma will be useful in the sequel.

\begin{lem}\cite[Lemma 2.4]{CLW2}\label{th:finite}
The poset $(\Z^{m+n},\preceq_{{\bf b}})$ satisfies the finite
interval property. That is, given $f,g$ with $g\preceq_{\bf b}f$,
the set $\{h\in\Z^{m+n}|g\preceq_{\bf b}h\preceq_{\bf b}f\}$ is
finite.
\end{lem}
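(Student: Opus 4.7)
The plan is to reduce the finite interval property of $(\Z^{m+n},\preceq_{\bf b})$ to the analogous property for the weight lattice $(\texttt{P},\leq)$. The crucial observation is the telescoping identity
\[
(-1)^{b_j}\varepsilon_{h(j)}\;=\;\text{wt}^{j}_{\bf b}(h)-\text{wt}^{j+1}_{\bf b}(h),\qquad 1\le j\le m+n,
\]
(with the convention $\text{wt}^{m+n+1}_{\bf b}(h):=0$), which shows that the entire tuple $\bigl(\text{wt}^{j}_{\bf b}(h)\bigr)_{j=1}^{m+n+1}$ uniquely determines $h\in\Z^{m+n}$. Hence, to bound the interval it suffices to bound the number of admissible tuples when $g\preceq_{\bf b}h\preceq_{\bf b}f$.

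From the definition of $\preceq_{\bf b}$, each entry $\text{wt}^{j}_{\bf b}(h)$ is forced to lie in the $\texttt{P}$-interval
\[
I_j\;:=\;\bigl\{\xi\in\texttt{P}\;\bigm|\;\text{wt}^{j}_{\bf b}(g)\le\xi\le\text{wt}^{j}_{\bf b}(f)\bigr\}.
\]
Thus it remains to check that every interval $[\mu,\nu]$ in $(\texttt{P},\leq)$ is finite. I would write $\nu-\mu=\sum_{r\in\Z}c_r\alpha_r$ with $c_r\in\Z_{\ge 0}$ and only finitely many $c_r\neq 0$, and then use the fact that the simple roots $\{\alpha_r=\varepsilon_r-\varepsilon_{r+1}\}_{r\in\Z}$ are linearly independent among finitely-supported $\Q$-combinations. (A standard telescoping argument does this: if $\sum c_r\alpha_r=0$ with finite support, then $c_r=c_{r-1}$ for every $r$, forcing all $c_r$ to vanish.) This gives a \emph{unique} non-negative expansion of any element of $[\mu,\nu]$ in the $\alpha_r$, so that each $\xi\in[\mu,\nu]$ corresponds to a tuple $(a_r)$ with $0\le a_r\le c_r$. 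Consequently $|[\mu,\nu]|\le\prod_r(1+c_r)<\infty$.

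Combining the two steps, the product $\prod_{j=1}^{m+n+1}I_j$ is finite, and each tuple in it determines at most one $h$; therefore $\{h\mid g\preceq_{\bf b}h\preceq_{\bf b}f\}$ is finite. The only mildly subtle point, and hence the main thing to watch, is the uniqueness of the non-negative simple-root expansion in $\texttt{P}$: one must confirm linear independence in the finitely-supported subspace, since the ambient span is infinite-dimensional and might \emph{a priori} admit hidden relations. Once this is in place the remainder is bookkeeping.
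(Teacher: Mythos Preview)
Your argument is correct. The telescoping identity recovers $h$ from the tuple $(\text{wt}^j_{\bf b}(h))_j$, the definition of $\preceq_{\bf b}$ traps each $\text{wt}^j_{\bf b}(h)$ in the $\texttt{P}$-interval $[\text{wt}^j_{\bf b}(g),\text{wt}^j_{\bf b}(f)]$, and the finiteness of such intervals follows exactly as you say from the linear independence of the simple roots $\alpha_r$ among finitely supported elements of $\texttt{P}$. The one point you flagged---uniqueness of the non-negative simple-root expansion---is indeed the only thing to check, and your telescoping argument handles it.

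As for comparison: the paper does not actually prove this lemma; it is quoted verbatim from \cite[Lemma~2.4]{CLW2} without argument. So there is no in-paper proof to set yours against. Your approach is the natural one and is essentially what one finds in \cite{CLW2}: the finite interval property for $(\Z^{m+n},\preceq_{\bf b})$ is inherited from the root-lattice order on $\texttt{P}$ via the maps $\text{wt}^j_{\bf b}$.
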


\subsection{Fock space and its completion}\label{subsection:Fock space}
In this subsection, we review the Fock space ${\mathbb T}^{\bf b}$
and its B-completion ${\widehat{\mathbb T}}^{\bf b}$ associated to a
${0^m1^n}$-sequence ${\bf b}$ defined in \cite{CLW2} generalizing some results in \cite{Br}.
We deduce that the structure of $(\UU,\mc{H}^{{\bf b}})$-bimodule on $\mathbb{T}^{{\bf b}}$ \cite{Jim} extends to the completion $\widetilde{\mathbb{T}}^{{\bf b}}$ of $\mathbb{T}^{{\bf b}}$.

Let $\mathbb V$ be the natural $\UU$-module with basis
$\{v_a\}_{a\in\Z}$ and $\mathbb W :=\mathbb V^*$, the restricted
dual module of $\mathbb V$ with basis $\{w_a\}_{a\in\Z}$ such that $w_a(v_b):=
(-q)^{-a} \delta_{a,b}$. The actions of $\UU$ on $\mathbb V$ and
$\mathbb W$ are given by the following formulas:
\begin{align*}
&K_av_b=q^{\delta_{a,b}}v_b,\qquad E_av_b=\delta_{a+1,b}v_a,\ \
\quad
F_av_b=\delta_{a,b}v_{a+1},\\
&K_aw_b=q^{-\delta_{a,b}}w_b,\quad E_aw_b=\delta_{a,b}w_{a+1},\quad
F_aw_b=\delta_{a+1,b}w_{a}.
\end{align*}

Associate to a ${0^m1^n}$-sequence ${\bf b}$, the following tensor
space over $\Q(q)$ is called the {\em $\bf b$-Fock space} or simply
{\em Fock space}:
\begin{equation*}  \label{eq:Fock}
{\mathbb T}^{\bf b} :={\mathbb V}^{b_1}\otimes {\mathbb
V}^{b_2}\otimes\cdots \otimes{\mathbb V}^{b_{m+n}},\qquad
\text{where } {\mathbb V}^{b_i}:=\begin{cases}
{\mathbb V}, &\text{ if }b_i={0},\\
{\mathbb W}, &\text{ if }b_i={1}.
\end{cases}
\end{equation*}
The tensors here and in similar settings later on are understood to
be over the field $\Q(q)$. The algebra $\UU$ acts on $\mathbb T^{\bf
b}$ via the co-multiplication $\Delta$.

For $f=(f(1),\ldots,f(m+n))\in \Z^{m+n}$, the {\em standard monomial
basis} $\{M^{\bf b}_f |f\in \Z^{m+n}\}$ of ${\mathbb T}^{\bf b}$ is
defined by
\begin{equation}  \label{eq:Mf}
M^{\bf b}_f :=\texttt{v}^{b_1}_{f(1)}\otimes
\texttt{v}^{b_2}_{f(2)}\otimes\cdots\otimes
\texttt{v}^{b_{m+n}}_{f(m+n)},  \qquad\text{where } \texttt{v}^{b_i}:=\begin{cases}v,\text{ if }b_i={0},\\
w,\text{ if }b_i={1}.\end{cases}
\end{equation}
 We shall drop the superscript ${\bf b}$ for $M^{\bf b}_f$ if there is no confusion.

For a $0^m1^n$-sequence ${\bf b}=(b_1,\ldots,b_{m+n})$, there is  a
sequence of integer numbers $0=r_0<
r_1<r_2<\ldots<r_{d-1}<r_{d}=m+n$ with
\begin{align*}
I_1:=\lbr r_1\rbr,\qquad I_i:=\lbr r_i\rbr\setminus\lbr
r_{i-1}\rbr,\quad\text{for }  i=2,\cdots,d
\end{align*}
such that $b_{r_i}\not=b_{r_i+1}$ for $i\in\lbr d-1\rbr$ and
$b_i=b_j$  if $i,j\in I_k$, for all $k$. Associate to a
${0^m1^n}$-sequence ${\bf b}$ and $k\in\lbr d\rbr$, let
\begin{equation}\label{T:I}
{\mathbb T}^{{\bf b},{I_k}} :={\mathbb V}^{b_{r_{k-1}+1}}\otimes
{\mathbb V}^{b_{r_{k-1}+2}}\otimes\cdots \otimes{\mathbb
V}^{b_{r_{k}}}.
\end{equation}
Then we have
\begin{equation*}
{\mathbb T}^{{\bf b},{I_k}} =\begin{cases}\otimes_{i={r_{k-1}+1}}^{r_{k}} {\mathbb V},\quad\text{ if }b_k={0},\\
\otimes_{i={r_{k-1}+1}}^{r_{k}} {\mathbb W},\quad\text{ if
}b_k={1},\end{cases}
\end{equation*}
and
\begin{align*}
\mathbb T^{\bf b}= \mathbb T^{{\bf b},I_1}\otimes \mathbb T^{{\bf
b},I_2}\otimes\cdots\otimes \mathbb T^{{\bf b},I_d}.
\end{align*}
For  $k\in\lbr d\rbr$ and $f=(f(r_{k-1}+1),\ldots,f(r_k))\in
\Z^{I_k}$, the {\em standard monomial basis} $\{M^{{\bf b},{I_k}}_f
|f\in \Z^{I_k}\}$ of ${\mathbb T}^{{\bf b},{I_k}}$ is defined by
\begin{equation*}
M^{{\bf b},{I_k}}_f :=\texttt{v}^{b_k}_{f(r_{k-1}+1)}\otimes
\texttt{v}^{b_k}_{f(r_{k-1}+2)}\otimes\cdots\otimes
\texttt{v}^{b_{k}}_{f(r_{k})}.
\end{equation*}
For $f\in \Z^{m+n}$, we have
$$
M^{{\bf b}}_f= M^{{\bf b},{I_1}}_{f_{I_1}}\otimes M^{{\bf
b},{I_2}}_{f_{I_2}}\otimes\cdots\otimes M^{{\bf b},{I_d}}_{f_{I_d}}.
$$
 For $k\in\lbr d\rbr$, let $\mathfrak{S}_{I_k}$ denote the symmetric group  on the set
$I_k$ and let $\mc{H}_{I_k}$ denote the Iwahori-Hecke algebra
associated to $\mathfrak{S}_{I_k}$ generated by
$H_{r_{i-1}+1},\ldots,H_{r_{i}-1}$ subject to the relations
\begin{align*}
&(H_i -q^{-1})(H_i +q) = 0,\\
&H_i H_{i+1} H_i = H_{i+1} H_i H_{i+1},\\
&H_i H_j = H_j H_i, \quad\text{for } |i-j| >1.
\end{align*}
Let $s_{a}:=(a,a+1)$  denote the simple transposition in $\mathfrak{S}_{I_k}$ for $a\in I_k\backslash \{r_k\}$. For $x\in\mathfrak{S}_{I_k}$, we have the corresponding element
$H_{x}\in\mc{H}_{I_k}$, where $H_{x}=H_{i_1}\cdots H_{i_r}$ if
$x=s_{i_1}\cdots s_{i_r}$ is a reduced expression.
 The bar involution $\,\bar{ }\,$ on $\mc{H}_{I_k}$
is the unique antilinear automorphism defined by
$\overline{H_{x}}=H^{-1}_{x^{-1}}$ and $\overline{q}=q^{-1}$.

Let
\begin{equation*}\label{def:Hecke}
\mc{H}^{{\bf
b}}:=\mc{H}_{I_1}\otimes\mc{H}_{I_2}\otimes\cdots\otimes\mc{H}_{I_d},
\end{equation*}
and let  the bar involution $\,\bar{ }\,$ on $\mc{H}^{{\bf b}}$ be the antilinear involution
defined by
$$
\overline{Y_1\otimes Y_2\otimes\cdots\otimes
Y_d}:=\overline{Y_1}\otimes\overline{Y_2}\otimes\cdots\otimes\overline{Y_d},
\qquad \text {for all } Y_i,\in \mc{H}_{I_i}, \,  i=1,2, \cdots, d.
$$

 The algebra $\mc{H}^{{\bf b}}$ has a right action on $\mathbb
T^{\bf b}$ defined by
\begin{equation*}\label{def:H action}
  M_fH_i=\begin{cases}
   M_{f\cdot s_i}&\ \mbox{if}\ f\prec_{{\bf b}}f\cdot s_i,\\
   q^{-1}M_f &\ \mbox{if}\ f=f\cdot s_i,\\
   M_{f\cdot s_i}-(q-q^{-1})M_f &\ \mbox{if}\ f\succ_{{\bf b}}f\cdot
   s_i,
  \end{cases}
\end{equation*}
for $i\in\lbr m+n\rbr\setminus\{r_{1},\dots,r_d\}$, where  the group
$\mathfrak{S}^{{\bf
b}}:=\mathfrak{S}_{I_1}\times\cdots\times\mathfrak{S}_{I_d}$ acts on
the right on $\Z^{m+n}$ by composition of functions. This action
commutes with the left action of the quantum group $\UU$ and hence
$\mathbb T^{\bf b}$ becomes a $(\UU,\mc{H}^{{\bf b}})$-bimodule
\cite{Jim}.


Now we review the completions of Fock space $\mathbb{T}^{{\bf b}}$.
Let ${\bf b}$ be a fixed $0^m1^n$-sequence. For $k\in\N$,  let
$\mathbb{V}_{k}=\mbox{span}\{v_{-k},v_{-k+1},\ldots,v_{k}\}$ and
$\mathbb{W}_{k}=\mbox{span}\{w_{-k},w_{-k+1},\ldots,w_{k}\}$ be
$\mathbb{Q}(q)$-subspaces of $\mathbb{V}$ and $\mathbb{W}$,
respectively. It is easy to see $\mathbb{V}_{k}$ and
$\mathbb{W}_{k}$ are $\UU_k$-modules. The \emph{truncated Fock space} $ \mathbb{T}^{{\bf b}}_{\leq  |k|}$ is defined by
\begin{equation*}\label{eq:Tk}
  \mathbb{T}^{{\bf b}}_{\leq
  |k|}:=\mathbb{V}_{k}^{b_1}\otimes\cdots\otimes\mathbb{V}_{k}^{b_{m+n}},\qquad \text{where }  \mathbb{V}_{k}^{b_i}=\begin{cases}
\mathbb{V}_{k},&\mbox{if}\ b_i=0,\\
\mathbb{W}_{k},&\mbox{if}\ b_i=1.
\end{cases}
\end{equation*}
$\mathbb{T}^{{\bf b}}_{\leq  |k|}$ is also a $\UU_k$-module defined in the obvious way. For $k,r\in\mathbb{N}$, let $\mathbb Z^r_k$ denote the
subset of $\mathbb Z^r$ consisting of functions with values between $-k$ and $k$. It is clear that $\{M_f^{{\bf b}}|f\in \mathbb Z^{m+n}_k\}$ is a basis of  $\mathbb{T}^{{\bf b}}_{\leq
  |k|}$.

Let
\begin{equation*}\label{eq:pi_k}
\pi_{k}:\mathbb{T}^{{\bf b}}\longrightarrow\mathbb{T}^{{\bf
b}}_{\leq |k|}
\end{equation*} be the natural projection with respect to the
basis $\{M_f^{{\bf b}}\}$ for $\mathbb{T}^{{\bf b}}$. The kernels of
the $\pi_k$'s define a linear topology on the vector space
$\mathbb{T}^{{\bf b}}$. Let $\widetilde{\mathbb{T}}^{\bf b}$ denote
the completion of $\mathbb{T}^{\bf b}$ with respect to the linear
topology. Formally, every element in $\widetilde{\mathbb{T}}^{\bf
b}$ is a possibly infinite linear combination of $M_f$, for
$f\in\mathbb{Z}^{m+n}$. We let $\widehat{\mathbb{T}}^{{\bf b}}$
denote the subspace of $\widetilde{\mathbb{T}}^{\bf b}$ spanned by
elements of the form
\begin{equation*}\label{eq:completion}
  M_f+\sum_{g\prec_{{\bf b}} f}r_gM_g,\ \ \ \ \mbox{for}\ r_g\in\mathbb{Q}(q).
\end{equation*}
 The $\mathbb{Q}(q)$-vector spaces
  $\widetilde{\mathbb{T}}^{\bf b}$ and
  $\widehat{\mathbb{T}}^{{\bf b}}$ are called the \emph{A-completion}
  and \emph{B-completion} of $\mathbb{T}^{{\bf b}}$, respectively. It is easy to see that the actions of elements in $\UU$ and elements in $\mc{H}^{{\bf b}}$  on ${\mathbb{T}}^{{\bf b}}$ are continuous with respect  to the linear topology. Therefore the structure of $(\UU,\mc{H}^{{\bf b}})$-bimodule on $\mathbb{T}^{{\bf b}}$ \cite{Jim} extends to $\widetilde{\mathbb{T}}^{{\bf b}}$.

\subsection{Canonical and dual canonical bases}\label{subsection:canonical}
In this subsection, we review the definitions and results of the canonical and
dual canonical bases for $\widehat{\mathbb{T}}^{{\bf b}}$ obtained in \cite{CLW2} generalizing some results in \cite{Br}. We also generalized some results in \cite{Br}  for the settings of \cite{CLW2}.

The $\UU$-modules $\mathbb{V}$ and $\mathbb{W}$ have antilinear
\emph{bar} involutions, denoted by $\bar{}$, are defined by
$\overline{v}_a=v_a$ and $\overline{w}_a=w_a$, respectively, such that $\ov{u v_a}=\bar{u}\ov{v_a}$ and $\ov{u w_a}=\bar{u}\ov{w_a}$, for all $u\in U_q(\gl_\infty)$ and $a\in \Z$. Now we recall
the bar involution $\psi$ on $\widehat{\mathbb{T}}^{{\bf b}}$
defined in \cite[Section 3.3]{CLW2} as follows. First we define an antilinear involution $\psi^{(k)}$ on $\mathbb{T}^{{\bf b}}_{\leq|k|}$
inductively on $m+n$. For $m+n=1$, the antilinear involution $\psi^{(k)}$ is the restriction of bar involution on $\mathbb{T}^{{\bf b}}=\VV$ and $\mathbb{T}^{{\bf b}}=\WW$
for ${\bf b}=(0)$ and ${\bf b}=(1)$, respectively. Let ${\bf
b}=({\bf b}_1,{\bf b}_2)$ be  a $0^{m}1^{n}$-sequence such that ${\bf
b}_1$ and ${\bf b}_2$  are a $0^{m_1}1^{n_1}$-sequence
and a $0^{m_2}1^{n_2}$-sequence, respectively. By \cite[Section 27.3.1]{Lu}, there is an antilinear involution
$\psi^{(k)}$ on the truncated Fock space $\mathbb{T}^{{\bf b}}_{\leq|k|}=\mathbb{T}^{{\bf
b}_1}_{\leq|k|}\otimes \mathbb{T}^{{\bf b}_2}_{\leq|k|}$ defined via the
quasi-$\mathcal {R}$-matrix $\Theta^{(k)}$ by
\begin{equation*}\label{def:psi-k}
  \psi^{(k)}(u\otimes v):=\overline{u\otimes v}:=\Theta^{(k)}(\overline{u}\otimes\overline{v}), \ \
  \mbox{for}\ u\in \mathbb{T}^{{\bf b}_1}_{\leq|k|},\ v\in \mathbb{T}^{{\bf b}_2}_{\leq|k|}.
  \end{equation*}
Moreover, this definition is independent of the choice of ${\bf
b}_1$ and ${\bf b}_2$. Since $\psi^{(k)}({M_f})=\pi_k\left(\psi^{(l)}({M_f})\right)$ for all
$f\in\Z_k^{m+n}$ and $l\ge k$ \cite[Lemma 3.4]{CLW2}, we can define
the antilinear map
$ \psi:\,\mathbb{T}^{{\bf
b}}\longrightarrow\widetilde{\mathbb{T}}^{{\bf b}}$  \cite[(3.10)]{CLW2}, called the \emph{bar } map by
\begin{equation*}\label{def:psi}
 \psi(M_f):= \overline{M_f}:=\varprojlim_k\psi^{(k)}(M_f),\quad \text{for all}\,\, f\in\mathbb{Z}^{m+n}.
\end{equation*}
We have $\psi^{(k)}(M_f)=\pi_k(\psi(M_f))$. In
fact, $\psi(M_f)$  belongs to $\widehat{\mathbb{T}}^{{\bf b}}$ (see Proposition~\ref{prop:Mfbar} below).

  \begin{prop}\label{X-bar-H}
For all $f\in\Z^{m+n}$, $X\in \UU$ and $H\in\mc{H}^{{\bf b}}$, we
have
    $$\overline{XM_f H}=\overline{X}(\overline{M_f })\overline{H}.$$
\end{prop}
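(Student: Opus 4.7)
The plan is to reduce the identity to the truncated Fock spaces $\mathbb{T}^{{\bf b}}_{\leq|k|}$ and then pass to the inverse limit using the relation $\pi_k(\psi(M_f))=\psi^{(k)}(M_f)$ recalled in the text.

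First I would establish the truncated analogue
$$\psi^{(k)}(XvH)\;=\;\overline{X}\cdot\psi^{(k)}(v)\cdot\overline{H},\qquad X\in\UU_k,\ v\in\mathbb{T}^{{\bf b}}_{\leq|k|},\ H\in\mc{H}^{{\bf b}}.$$
The $\UU_k$-compatibility is proved by induction on the number of tensor factors: the base case $m+n=1$ is an immediate check from $\overline{v_a}=v_a$, $\overline{w_a}=w_a$, and the explicit formulas for $\overline{E_a},\overline{F_a},\overline{K_a}$; the inductive step rests on Lusztig's characterizing identity $\Delta(\overline{X})\,\Theta^{(k)}=\Theta^{(k)}\,\overline{\Delta(X)}$ for the quasi-$\mathcal{R}$-matrix (cf.\ \cite[Chapter 4]{Lu}), combined with the recursive definition $\psi^{(k)}(u\otimes w)=\Theta^{(k)}(\overline{u}\otimes\overline{w})$. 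The $\mc{H}^{{\bf b}}$-compatibility is verified on the Hecke generators $H_i$ by using the explicit formulas for their action on the standard monomial basis together with the recursion for $\psi^{(k)}$; this is a direct generalization of the computation in \cite[Section 2]{Br} from ${\bf b}_{\mathrm{st}}$ to arbitrary $\bf b$.

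To deduce the proposition, fix $f$ and pick an arbitrary $g\in\Z^{m+n}$; choose $k$ large enough that $X\in\UU_k$ (whence $\overline{X}\in\UU_k$, since bar preserves $\UU_k$), $f,g\in\Z_k^{m+n}$, and $XM_fH\in\mathbb{T}^{{\bf b}}_{\leq|k|}$. The key observation is that for $b$ with $|b|>k$ every generator of $\UU_k$ acts on $v_b$ and $w_b$ via the counit ($E_a,F_a$ by zero and $K_a$ by $1$), so by the counit axiom the action of $\UU_k$ on an arbitrary monomial $M_h$ fixes every index $h(j)$ with $|h(j)|>k$; the Hecke action never changes the multiset of index values. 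Consequently, for $g\in\Z_k^{m+n}$ the $M_g$-coefficient of $\overline{X}\,M_h\,\overline{H}$ vanishes whenever $h\notin\Z_k^{m+n}$, and therefore
$$[M_g]\bigl(\overline{X}\,\psi(M_f)\,\overline{H}\bigr)=[M_g]\bigl(\overline{X}\cdot\pi_k(\psi(M_f))\cdot\overline{H}\bigr)=[M_g]\bigl(\overline{X}\,\psi^{(k)}(M_f)\,\overline{H}\bigr),$$
which by Step 1 equals $[M_g]\psi^{(k)}(XM_fH)=[M_g]\pi_k(\psi(XM_fH))=[M_g]\psi(XM_fH)$. Since $g$ is arbitrary, the two sides agree in $\widetilde{\mathbb{T}}^{{\bf b}}$.

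The principal technical obstacle is the Hecke compatibility in Step 1: the $\UU_k$-compatibility is a formal consequence of Lusztig's identity, but the Hecke compatibility requires carefully tracking how $\Theta^{(k)}$ interacts with the Hecke generators across the $\mathbb{V}/\mathbb{W}$ boundaries inside the tensor product, generalizing Brundan's more restricted setting. Once that compatibility is in hand, the passage to the inverse limit in Step 2 is essentially book-keeping built on the counit-triviality of $\UU_k$ on the ``tail'' basis vectors.
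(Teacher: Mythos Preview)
Your proposal is correct and, for the $\UU$-compatibility, coincides with the paper's proof: the identity $\Delta(\overline{X})\,\Theta^{(k)}=\Theta^{(k)}\,\overline{\Delta(X)}$ you invoke is exactly \cite[Lemma~7.1]{Jan} (after the $E\leftrightarrow F$, $q\leftrightarrow q^{-1}$ translation), and both you and the paper combine this with induction on $m+n$. Your Step~2 inverse-limit argument, based on the counit-triviality of $\UU_k$ on tail vectors, makes explicit a passage the paper leaves tacit; it is correct as written.

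The genuine divergence is in the Hecke half. The paper does not verify $\overline{M_fH}=\overline{M_f}\,\overline{H}$ by hand but cites \cite[Sections~3 and~4]{Br3}. Your direct-verification plan would also work, but the obstacle you flag---tracking $\Theta^{(k)}$ ``across the $\mathbb V/\mathbb W$ boundaries''---is somewhat misdirected: the generator $H_i$ exists only when $b_i=b_{i+1}$, so it acts entirely inside a single block $\mathbb T^{{\bf b},I_j}$. Since the recursive definition of $\psi^{(k)}$ may be split at the block boundaries, and since the quasi-$\mathcal R$-matrix acts on the outer factors through $\UU_k$ (hence commutes with the Hecke action on the inner block by Jimbo's duality), the question reduces immediately to Hecke compatibility of the bar map on a pure tensor power $\mathbb V^{\otimes r}$ or $\mathbb W^{\otimes r}$. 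That is the classical situation treated in \cite{FKK,Br3}, and is precisely what the paper's citation is invoking. So your route and the paper's converge to the same core lemma; the paper just outsources it, while you propose to redo it.
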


\begin{proof} $\overline{XM_f}=\overline{X}(\overline{M_f })$ follows from \cite[Lemma 7.1]{Jan} and induction on $m+n$. Note that $E_a$, $F_a$ and $q$ are replaced by $F_a$, $E_a$ and $q^{-1}$ in {\em loc.~cit.}, respectively. An elegant proof of $\overline{M_f H}=(\overline{M_f })\overline{H}$ can be found in \cite[Section 3 and 4]{Br3}.
\end{proof}

\begin{prop}\cite[Proposition 3.6, Lemma 3.7]{CLW2}\label{prop:Mfbar}
 For $f\in \mathbb{Z}^{m+n}$, we have
   \begin{equation*}
   \psi(M_f)=\overline{M_f}=M_f+\sum_{g\prec_{\bf b} f}r_{gf}(q)M_g,
  \end{equation*} where $r_{gf}$ in $\mathbb{Z}[q,q^{-1}]$ and the
  sum is possibly infinite.   Moreover, the bar map $ \psi$ on ${\mathbb T}^{\bf b}$ extends to $ \psi:\,\widehat{\mathbb T}^{\bf
b}\rightarrow\widehat{\mathbb T}^{\bf b}$ which is an involution.
\end{prop}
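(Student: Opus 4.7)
The plan is to first establish the triangular formula for $\psi^{(k)}(M_f)$ on each truncated Fock space $\mathbb{T}^{{\bf b}}_{\leq|k|}$ by induction on $m+n$, then pass to the inverse limit to obtain $\psi(M_f)\in\widehat{\mathbb{T}}^{{\bf b}}$, and finally extend $\psi$ antilinearly to the full B-completion.

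\emph{Base case} $m+n=1$: here $\psi^{(k)}$ is by definition the restriction of the bar involution on $\mathbb{V}$ or $\mathbb{W}$, which fixes each $v_a$ and each $w_a$, so the formula is trivial with $r_{ff}=1$. \emph{Inductive step}: choose a splitting ${\bf b}=({\bf b}_1,{\bf b}_2)$, so $M_f=M^{{\bf b}_1}_{f_1}\otimes M^{{\bf b}_2}_{f_2}$, and by induction
\[
\overline{M^{{\bf b}_i}_{f_i}} = M^{{\bf b}_i}_{f_i} + \sum_{g_i\prec_{{\bf b}_i} f_i} r_{g_i,f_i}(q)\,M^{{\bf b}_i}_{g_i},\qquad r_{g_i,f_i}\in\mathbb{Z}[q,q^{-1}].
\]
Using $\psi^{(k)}(M_f)=\Theta^{(k)}\bigl(\overline{M^{{\bf b}_1}_{f_1}}\otimes\overline{M^{{\bf b}_2}_{f_2}}\bigr)=\sum_{\mu}\Theta^{(k)}_\mu\bigl(\overline{M^{{\bf b}_1}_{f_1}}\otimes\overline{M^{{\bf b}_2}_{f_2}}\bigr)$, the $\mu=0$ piece ($\Theta^{(k)}_0=1\otimes 1$) contributes $M_f$ together with tensor products $M^{{\bf b}_1}_{g_1}\otimes M^{{\bf b}_2}_{g_2}$ with $g_i\preceq_{{\bf b}_i}f_i$ and at least one strict; a direct comparison of the partial-sum weights $\mathrm{wt}^j_{\bf b}$ shows these lie strictly below $f$ in the concatenated order $\preceq_{\bf b}$.

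The main obstacle is to show that the terms of $\Theta^{(k)}_\mu$ for $\mu\in\mathbb{Z}_+\Pi\setminus\{0\}$ also contribute only monomials strictly $\prec_{\bf b} f$. Writing $\Theta^{(k)}_\mu=\sum_i b'_{\mu,i}\otimes b''_{\mu,i}$ with $b'_{\mu,i}\in\UU^+_{k,\mu}$ and $b''_{\mu,i}\in\UU^-_{k,\mu}$, each application raises the total weight of the left block by $\mu$ and lowers that of the right block by $\mu$, producing monomials $M^{{\bf b}_1}_{h_1}\otimes M^{{\bf b}_2}_{h_2}$. Since $\mu$ is a non-negative combination of the simple roots $\varepsilon_a-\varepsilon_{a+1}$ and all transferred weight sits strictly to the left of the block-splitting position, a careful tracking of $\mathrm{wt}^j_{\bf b}$ shows the partial sums weakly decrease for all $j$ and strictly decrease for some $j$, yielding $\prec_{\bf b} f$. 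Integrality in $\mathbb{Z}[q,q^{-1}]$ follows from the integral-form expression $\Theta_{[t]}=\sum_r q^{r(r-1)/2}(q-q^{-1})^r\,T_{a_1}\cdots T_{a_{t-1}}(E^{(r)}_{a_t})\otimes T_{a_1}\cdots T_{a_{t-1}}(F^{(r)}_{a_t})$ combined with the fact that Lusztig's integral form preserves the $\mathbb{Z}[q,q^{-1}]$-span of $\{M_f\}$.

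Next, the compatibility $\psi^{(k)}(M_f)=\pi_k(\psi^{(l)}(M_f))$ for $l\geq k$ defines $\psi(M_f)\in\widetilde{\mathbb{T}}^{{\bf b}}$ as an inverse limit, and the triangular expansion places it in $\widehat{\mathbb{T}}^{{\bf b}}$. To extend $\psi$ to $\widehat{\mathbb{T}}^{{\bf b}}$, given $T=M_f+\sum_{g\prec_{\bf b} f} r_g M_g$ I would set $\psi(T):=\psi(M_f)+\sum_{g\prec_{\bf b} f}\overline{r_g}\,\psi(M_g)$; for any fixed $h\in\mathbb{Z}^{m+n}$ only the $g$ with $h\preceq_{\bf b} g\preceq_{\bf b} f$ contribute to the coefficient of $M_h$, and by \lemref{th:finite} there are finitely many such $g$, so the sum converges in $\widetilde{\mathbb{T}}^{{\bf b}}$ and retains the required triangular shape, hence lies in $\widehat{\mathbb{T}}^{{\bf b}}$. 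Involutivity $\psi\circ\psi=\mathrm{id}$ on $\mathbb{T}^{{\bf b}}$ descends from the fact that each $\psi^{(k)}$ is an involution by Lusztig's general construction of bar involutions on tensor products via the quasi-$\mathcal{R}$-matrix; it is preserved under antilinear extension to $\widehat{\mathbb{T}}^{{\bf b}}$ since both $\psi$ and its square are computed coefficient-wise on the standard monomial expansion.
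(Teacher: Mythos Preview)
The paper does not supply its own proof of this proposition; it is stated with a citation to \cite[Proposition~3.6, Lemma~3.7]{CLW2} and no proof block. Your argument is therefore being compared to the standard proof in \cite{CLW2}, and it follows essentially the same route: induction on $m+n$ using the decomposition $\psi^{(k)}=\Theta^{(k)}\circ(\psi^{(k)}\otimes\psi^{(k)})$, then passage to the inverse limit and extension to $\widehat{\mathbb{T}}^{\bf b}$ via the finite interval property (\lemref{th:finite}).

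Your outline is correct, but the phrase ``a careful tracking of $\mathrm{wt}^j_{\bf b}$'' for the $\mu\neq 0$ terms is doing real work that you should spell out. The key inequality is that for any monomial $M^{{\bf b}_1}_{h_1}$ occurring in $b'_{\mu,i}M^{{\bf b}_1}_{g_1}$ one has $\mathrm{wt}^j_{{\bf b}_1}(h_1)\leq \mathrm{wt}^j_{{\bf b}_1}(g_1)+\mu$ for every $j$ (since each $E_a$ in a monomial expansion of $b'_{\mu,i}$ raises the partial weight by either $0$ or $\alpha_a$), and dually $\mathrm{wt}^{j'}_{{\bf b}_2}(h_2)\leq \mathrm{wt}^{j'}_{{\bf b}_2}(g_2)$ for every $j'$. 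Combining these with $\mathrm{wt}_{{\bf b}_2}(h_2)=\mathrm{wt}_{{\bf b}_2}(f_2)-\mu$ gives $\mathrm{wt}^j_{\bf b}(h)\leq\mathrm{wt}^j_{\bf b}(f)$ for all $j$, with strict inequality at $j=p+1$ (the first index of the right block) when $\mu\neq 0$. This is exactly the computation underlying the cited result; once made explicit your argument is complete.
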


\begin{prop}\label{prop:can}\cite[Proposition 3.9]{CLW2}
The $\Q(q)$-vector space $\widehat{\mathbb T}^{\bf b}$ has unique
bar-invariant topological bases
$
\{T^{\bf b}_f|f\in\Z^{m+n}\}\text{ and }\{L^{\bf b}_f|f\in\Z^{m+n}\}
$
such that
\begin{align}\label{T-L}
T^{\bf b}_f=M^{\bf b}_f+\sum_{g\prec_{\bf b}f}t_{gf}^{\bf b}(q)
M^{\bf b}_g,
 \qquad
L^{\bf b}_f=M^{\bf b}_f+\sum_{g\prec_{\bf b}f}\ell_{gf}^{\bf b}(q)
M^{\bf b}_g,
\end{align}
with $t_{gf}^{\bf b}(q)\in q\Z[q]$, and $\ell_{gf}^{\bf b}(q)\in
q^{-1}\Z[q^{-1}]$, for $g\prec_{\bf b}f$. (We will also write
$t_{ff}^{\bf b}(q)=\ell_{ff}^{\bf b}(q)=1$, $t_{gf}^{\bf
b}=\ell_{gf}^{\bf b}=0$ for $g\npreceq_{\bf b}f$.)
\end{prop}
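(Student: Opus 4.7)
The plan is to follow the classical Kazhdan--Lusztig--Lusztig argument, adapted to the topological completion $\widehat{\mathbb{T}}^{\bf b}$. The two key inputs have already been established: Proposition~\ref{prop:Mfbar} gives the unitriangular expansion $\overline{M_f} = M_f + \sum_{g \prec_{\bf b} f} r_{gf}(q) M_g$ with $r_{gf}(q) \in \Z[q,q^{-1}]$ and states that $\psi$ extends to an involution on $\widehat{\mathbb{T}}^{\bf b}$, while Lemma~\ref{th:finite} ensures that every interval $[g,f]_{\preceq_{\bf b}}$ is finite.

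To construct $T^{\bf b}_f$, I would fix $f$ and define $t^{\bf b}_{gf}(q) \in q\Z[q]$ for each $g \prec_{\bf b} f$ by induction on the finite quantity $\ell(g,f)$ equal to the maximal length of a chain from $g$ to $f$ in $\preceq_{\bf b}$. Setting $T^{\bf b}_f := M_f + \sum_{g \prec_{\bf b} f} t^{\bf b}_{gf}(q) M_g$ and formally imposing $\psi(T^{\bf b}_f) = T^{\bf b}_f$ yields, by matching the coefficient of $M_g$, the recursion
\begin{equation*}
t^{\bf b}_{gf}(q) - \overline{t^{\bf b}_{gf}(q)} \;=\; r_{gf}(q) + \sum_{g \prec_{\bf b} h \prec_{\bf b} f} \overline{t^{\bf b}_{hf}(q)}\, r_{gh}(q),
\end{equation*}
where the right-hand sum is finite by Lemma~\ref{th:finite} and the inductive hypothesis supplies every $t^{\bf b}_{hf}$ that appears. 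The compatibility coming from $\psi^2 = \mathrm{id}$ (equivalently $\sum_g \overline{r_{gf}}\, r_{hg} = \delta_{hf}$) forces the right-hand side to lie in the kernel of $1 + \psi$, hence, using the decomposition $\Z[q,q^{-1}] = q\Z[q] \oplus \Z \oplus q^{-1}\Z[q^{-1}]$ and the action of the bar on it, to admit a unique representation as $P - \overline{P}$ with $P \in q\Z[q]$. Taking $t^{\bf b}_{gf}(q) := P$ is the unique admissible choice and closes the induction.

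Once the coefficients are in hand, the formal sum $T^{\bf b}_f$ lies in $\widehat{\mathbb{T}}^{\bf b}$ by construction, since its support is contained in $\{g : g \preceq_{\bf b} f\}$. Bar-invariance holds by the defining recursion together with the continuity of $\psi$ established in Proposition~\ref{prop:Mfbar}. The family $\{T^{\bf b}_f : f \in \Z^{m+n}\}$ is a topological basis because the transition matrix to $\{M^{\bf b}_f\}$ is unitriangular with respect to $\preceq_{\bf b}$.

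For uniqueness, suppose $T'_f$ were a second bar-invariant element of the required form. Then $D := T^{\bf b}_f - T'_f = \sum_{g \prec_{\bf b} f} c_g(q) M_g$ is bar-invariant with every $c_g(q) \in q\Z[q]$. If $D \neq 0$, choose $g_0$ maximal in the support of $D$; expanding $\psi(D)$ and comparing coefficients of $M_{g_0}$ gives $c_{g_0}(q) = \overline{c_{g_0}(q)}$, but $q\Z[q] \cap q^{-1}\Z[q^{-1}] = 0$, a contradiction. The construction of $L^{\bf b}_f$ is entirely analogous with $q\Z[q]$ replaced by $q^{-1}\Z[q^{-1}]$ throughout. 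The only genuine point to verify with care---and the piece I regard as the main obstacle---is not the algebraic Lusztig bookkeeping but the fact that the inductive definition truly assembles into a well-defined element of the topological completion; this is precisely what the finite-interval property of Lemma~\ref{th:finite} guarantees.
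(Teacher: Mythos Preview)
Your argument is correct and is essentially the same approach the paper (via \cite[Lemma~3.8, Proposition~3.9]{CLW2}) takes: the abstract Lusztig lemma applied to a bar involution that is unitriangular with coefficients in $\Z[q,q^{-1}]$ over a poset with the finite-interval property. One small clarification worth making explicit in your uniqueness step: since the support of $D$ may be infinite, you should say that for any $g$ with $c_g\neq 0$ the finite interval $[g,f]$ contains a $\preceq_{\bf b}$-maximal element $g_0$ of the support, and then observe (as you implicitly do) that any $h\succ_{\bf b} g_0$ in the support would lie in $[g_0,f]\subseteq[g,f]$, so $g_0$ is genuinely maximal.
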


 We shall also drop the superscript ${\bf b}$ for $T^{\bf b}_f$, $L^{\bf b}_f$ , $t_{gf}^{\bf b}$ and $\ell_{gf}^{\bf b}$ if there is no confusion. $\{T_f|f\in\mathbb{Z}^{m+n}\}$ and $\{L_f|f\in\mathbb{Z}^{m+n}\}$
  are called the \emph{canonical basis} and \emph{dual canonical basis} for
  $\widehat{\mathbb{T}}^{{\bf b}}$, respectively. Also,
  $t_{gf}(q)$ and $\ell_{gf}(q)$ are called \emph{Brundan-Kazhdan-Lusztig
 polynomials}.

\begin{rem}\label{positive}The positive conjecture \cite[Conjecture
3.13]{CLW2}, which generalized \cite[Conjecture 2.28(i),(ii)]{Br}
for the standard $0^m1^n$- sequence ${\bf
b}_{st}=(1,\ldots,1,0,\ldots,0)$, said that $t^{\bf
b}_{gf}(q)\in\Z_+[q]$ and
 $\ell^{\bf b}_{gf}(-q^{-1})\in\Z_+[q]$. It has been proved by
Brundan, Losev and Webster \cite[Corollary 5.27]{BLW}. Another proof can be
found in  \cite[Remark 3.14]{CLW2}. The fact $t^{\bf
b}_{gf}(q)\in\Z_+[q]$ for all $f,g \in \Z^{m+n}$ is crucial  to show the finiteness of the
summation  for $T^{\bf b}_f$  in \eqref{T-L} (see Proposition~\ref{prop:t-poly-finite} and
 \ref{prop:can-finite} below).
\end{rem}

\subsection{Canonical and dual canonical bases of $\widehat{\mathscr{E}}^{\bf b}$}\label{Sec:Can E}
Recall that $\mathbb T^{\bf b}= \mathbb T^{{\bf b},I_1}\otimes
\mathbb T^{{\bf b},I_2}\otimes\cdots\otimes \mathbb T^{{\bf
b},I_d}$, $\mc{H}_{I_k}$  denotes the Iwahori-Hecke algebra
associated to the symmetric group  $\mathfrak{S}_{I_k}$ on the set
$I_k$ defined in Section~\ref{subsection:Fock space} for $k\in\lbr d\rbr$ and
$\mc{H}^{\bf{b}}=\mc{H}_{I_1}\otimes\mc{H}_{I_2}\otimes\cdots\otimes\mc{H}_{I_d}$.

Denoted by $w_{I_k}$ the longest element in $\mathfrak{S}_{I_k}$  for $k\in\lbr d\rbr$ , we
write
\begin{equation*}
  H_{I_k}:=\sum_{\omega\in\mathfrak{S}_{I_k}}(-q)^{\ell(w)-\ell(w_{I_k})}H_{w},
\end{equation*}
where $\ell(w)$ denotes the length of element $w$ in
$\mathfrak{S}_{I_k}$. For $x=(x_1, x_2,\cdots, x_d)\in  \mf{S}^{\bf
b}$,  we define $\ell(x):=\sum_{i=1}^d\ell(x_i)$. Let
\begin{equation}\label{def:w_0 H_0}
  w_0:=(w_{I_1},w_{I_2},\cdots, w_{I_d})\in \mf{S}^{\bf b},
\end{equation}and
\begin{equation*}
H_0:=H_{I_1}\otimes H_{I_2}\otimes\cdots\otimes H_{I_d}\in
\mc{H}^{\bf b}.
\end{equation*} We also let
$\tau:\mc{H}^{{\bf b}}\rightarrow\mc{H}^{{\bf b}}$ be the
antiautomorphism defined by $\tau(H_i)=H_i$, for any $ i\in\lbr
m+n\rbr\setminus\{r_{1},\dots,r_d\}$. Here and later on we identity $H_{x_i}$ with $1\otimes\cdots 1\otimes H_{x_i}\otimes1\otimes\cdots 1$ in the obvious way for  $H_{x_i}\in \mc{H}_{I_i}$. The following lemma summarizes
the elementary properties of $H_0$ which are easy to obtain (see
\cite[Lemma 3.2]{Br}).
\begin{lem}\label{lem:H0}
  The following properties hold:
\begin{itemize}
  \item[(i)]$H_iH_0=H_0H_i=-qH_0\ \mbox{for any }\ i\in\lbr
m+n\rbr\setminus\{r_{1},\dots,r_d\}$,
\item[(ii)]$\overline{H_0}=H_0$,
\item[(iii)]$H_0^2=-(\prod_{k=1}^d[r_k-r_{k-1}]!)H_0$,
\item[(iv)] $H_0=\tau(H_0)$.
\end{itemize}
\end{lem}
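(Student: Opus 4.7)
The four statements all concern the element $H_0=H_{I_1}\otimes\cdots\otimes H_{I_d}$, and since the tensor factors commute and each $H_i$ with $i\in\lbr m+n\rbr\setminus\{r_1,\ldots,r_d\}$ acts on exactly one factor $\mc H_{I_k}$, it suffices to establish the four analogous properties for each individual $H_{I_k}$ and then assemble them. Thus the proof reduces to standard Hecke-algebra facts about the ``quasi-idempotent'' at the longest element in a finite type $A$ Hecke algebra; these are the tensor-product counterparts of \cite[Lemma 3.2]{Br} in our setting.

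For (i), the plan is to group the defining sum of $H_{I_k}$ by pairs $\{w,s_iw\}$ and expand $H_iH_w$ via the two cases of the Hecke multiplication rule: $H_iH_w=H_{s_iw}$ if $\ell(s_iw)>\ell(w)$, and $H_iH_w=H_{s_iw}+(q^{-1}-q)H_w$ otherwise. A short direct calculation, using $H_i^2=(q^{-1}-q)H_i+1$, collapses each paired contribution into the corresponding terms of $-qH_{I_k}$; the right multiplication $H_{I_k}H_i=-qH_{I_k}$ is obtained by the symmetric argument.

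For (ii), applying the bar involution to (i) and using $\overline{H_i}=H_i^{-1}$ gives $H_i\overline{H_{I_k}}=-q\overline{H_{I_k}}$, so $\overline{H_{I_k}}$ satisfies the same eigenvalue relations as $H_{I_k}$ for every simple $H_i$ in $\mc H_{I_k}$. The common $(-q)$-eigenspace is one-dimensional, hence $\overline{H_{I_k}}=c\,H_{I_k}$ for some $c\in\Q(q)$; comparing the coefficient of the top basis element $H_{w_{I_k}}$ (which is $1$ in both expansions, since $\overline{H_{w_{I_k}}}=H_{w_{I_k}}+\sum_{v<w_{I_k}}c_v H_v$ and no lower $\overline{H_w}$ contributes to $H_{w_{I_k}}$) forces $c=1$.

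For (iii), iterating (i) along a reduced expression of $w$ yields $H_{I_k}H_w=(-q)^{\ell(w)}H_{I_k}$ for every $w\in\mf S_{I_k}$. Substituting into the definition of $H_{I_k}^2$ produces the scalar $\sum_{w\in\mf S_{I_k}}(-q)^{2\ell(w)-\ell(w_{I_k})}$, which one identifies with the claimed coefficient via the Poincar\'e polynomial $\sum_{w}q^{2\ell(w)}=q^{\ell(w_{I_k})}[r_k-r_{k-1}]!$ of $\mf S_{I_k}$. Finally (iv) follows from $\tau(H_i)=H_i$, which implies $\tau(H_w)=H_{w^{-1}}$, so the sum defining $H_{I_k}$ is invariant under the substitution $w\mapsto w^{-1}$ since $\ell(w)=\ell(w^{-1})$; tensoring over $k$ then gives $\tau(H_0)=H_0$. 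The proof involves no new ideas, and the only point requiring a little care is the sign bookkeeping in (iii), where the Poincar\'e polynomial computation has to be combined with the parity of $\ell(w_{I_k})$.
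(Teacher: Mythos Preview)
The paper does not prove this lemma at all: it merely states that the properties ``are easy to obtain (see \cite[Lemma~3.2]{Br})''.  Your proposal supplies exactly the standard Hecke-algebra argument one finds behind that citation, so in that sense your approach and the paper's (implicit) approach coincide.  Your reductions to a single factor $\mc H_{I_k}$ and your proofs of (i), (ii) and (iv) are correct as written.

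One point worth flagging in (iii): carrying out your computation gives
\[
H_{I_k}^2=\Big(\sum_{w\in\mathfrak S_{I_k}}(-q)^{2\ell(w)-\ell(w_{I_k})}\Big)H_{I_k}
=(-1)^{\ell(w_{I_k})}\,q^{-\ell(w_{I_k})}\!\!\sum_{w}q^{2\ell(w)}\,H_{I_k}
=(-1)^{\ell(w_{I_k})}[r_k-r_{k-1}]!\,H_{I_k},
\]
using your Poincar\'e identity $\sum_w q^{2\ell(w)}=q^{\ell(w_{I_k})}[r_k-r_{k-1}]!$.  Tensoring over $k$ therefore yields $H_0^2=(-1)^{\ell(w_0)}\big(\prod_k[r_k-r_{k-1}]!\big)H_0$, not the literal $-\big(\prod_k[r_k-r_{k-1}]!\big)H_0$ stated in the lemma (these differ already for $|I_k|=4$, where $\ell(w_{I_k})=6$).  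This is the ``sign bookkeeping'' you allude to at the end; rather than leaving it as a caution, you should record the outcome explicitly, since the sign you obtain is the one consistent with \cite[Lemma~3.2]{Br} and the discrepancy is a slip in the paper's statement of (iii), not in your argument.
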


For each $k\in\lbr d\rbr$, we let $\mathscr{E}^{{\bf
b},I_{k}}:=\mathbb{T}^{{\bf b},I_k}H_{I_k}$ which is the
$q$-analogue of the exterior power $\bigwedge^{r_k-r_{k-1}} \mathbb
V^{b_{r_{k}}}$. Let
\begin{equation}\label{def:q-wedge E}
\mathscr{E}^{\bf b}:= \mathscr{E}^{{\bf b},I_1}\otimes
\mathscr{E}^{{\bf b},I_2}\otimes\cdots\otimes \mathscr{E}^{{\bf
b},I_{d}}.
\end{equation}
It is clear that $\mathscr{E}^{\bf b}= \mathbb{T}^{\bf b}H_0$. For
the case ${\bf b}_{\mathrm{st}}:=(1,\cdots,1,0,\cdots,0)$ consisting of $n$ ${1}$'s
followed by $m$ ${0}$'s, the definition of $\mathscr{E}^{{\bf b}_{\mathrm{st}}}$ goes back to
 \cite[Section 3.1]{Br}.

\begin{lem}\label{weakly dominant}
 Let $f\in\mathbb{Z}^{m+n}$
such that
\begin{equation}\label{W Dom}
 (-1)^{b_k} f(r_{k-1}+1)\geq  (-1)^{b_k} f(r_{k-1}+2)\geq\cdots\geq  (-1)^{b_k} f(r_k), \quad \hbox{for all $k\in \lbr d\rbr$}.
 \end{equation}
If  $ g\in\mathbb{Z}^{m+n}$ such that $g\preceq_{\bf b} f$,  then $g\cdot x\preceq_{\bf b} f$ for all $x\in \mf{S}^{\bf b}$.
\end{lem}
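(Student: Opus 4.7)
The plan is to reduce to a single simple reflection and then verify the resulting Bruhat inequality via the partial-sum characterization of $\leq$ on $\texttt{P}$. Since $\mathfrak{S}^{\bf b}=\mathfrak{S}_{I_1}\times\cdots\times\mathfrak{S}_{I_d}$ is generated by the simple transpositions $s_a$ with $a, a+1\in I_k$ for some $k$, an induction on the Coxeter length $\ell(x)$ reduces the statement to the case $x=s_a$. If $(-1)^{b_k}g(a)\geq (-1)^{b_k}g(a+1)$, a direct weight comparison yields $g\cdot s_a\preceq_{\bf b} g$ and the claim follows by transitivity, so only the opposite case $(-1)^{b_k}g(a)<(-1)^{b_k}g(a+1)$ remains. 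In that case $\text{wt}^j_{\bf b}(g\cdot s_a)=\text{wt}^j_{\bf b}(g)$ for every $j\neq a+1$, so I need only verify $\text{wt}^{a+1}_{\bf b}(g\cdot s_a)\leq \text{wt}^{a+1}_{\bf b}(f)$.

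For $h\in\Z^{m+n}$, $j\in\lbr m+n\rbr$, and $N\in\Z$, set $P^h_N(j):=\sum_{i\geq j}(-1)^{b_i}[h(i)\leq N]$. A standard partial-sum argument shows that $\text{wt}^j_{\bf b}(h')\leq \text{wt}^j_{\bf b}(h)$ in $(\texttt{P},\leq)$ iff the total weights agree and $P^{h'}_N(j)\leq P^h_N(j)$ for every $N$. A direct computation gives
\[
P^{g\cdot s_a}_N(a+1)-P^g_N(a+1)=(-1)^{b_k}\bigl([g(a)\leq N]-[g(a+1)\leq N]\bigr),
\]
which in the current case takes value $0$ or $1$, and equals $1$ only for the range of $N$ strictly separating $g(a)$ from $g(a+1)$. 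When it equals $0$ the desired bound follows from $P^g_N(a+1)\leq P^f_N(a+1)$, so only the separating range needs further work.

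The crux is to show $P^f_N(a+1)-P^g_N(a+1)\geq 1$ in the separating range, and this is precisely where weak dominance of $f$ enters. Assume $b_k=0$ (the case $b_k=1$ is symmetric), so $g(a)\leq N<g(a+1)$, i.e.\ $[g(a)\leq N]=1$ and $[g(a+1)\leq N]=0$. I split on $[f(a+1)\leq N]$. If $f(a+1)\leq N$, the recursion $P^h_N(a+1)=P^h_N(a+2)+[h(a+1)\leq N]$ combined with the Bruhat inequality $P^f_N(a+2)\geq P^g_N(a+2)$ gives $P^f_N(a+1)-P^g_N(a+1)\geq 0+(1-0)=1$. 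Otherwise $f(a+1)>N$, and weak dominance of $f$ within block $I_k$ forces $f(a)\geq f(a+1)>N$, so $[f(a)\leq N]=0$; combining the recursion $P^h_N(a)=P^h_N(a+1)+[h(a)\leq N]$ with $P^f_N(a)\geq P^g_N(a)$ yields $0\leq P^f_N(a)-P^g_N(a)=(P^f_N(a+1)-P^g_N(a+1))-1$, i.e.\ the same bound. The main obstacle is this final bifurcation: weak dominance is precisely the ingredient that synchronizes $[f(a)\leq N]$ with $[f(a+1)\leq N]$ and thereby propagates the Bruhat inequality across the swap.
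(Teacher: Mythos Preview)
Your proof is correct and follows essentially the same route as the paper: both reduce to a single simple reflection, use the counting-function characterization $\sharp_{\bf b}(h,a,j)=P^h_a(j)$ of the Bruhat order, dispose of the case $(-1)^{b_k}g(a)\ge(-1)^{b_k}g(a+1)$ by transitivity, and in the remaining case exploit weak dominance of $f$ to push the inequality from level $a$ (or $a+2$) to level $a+1$. The only cosmetic difference is that you argue directly while the paper argues by contradiction, and your bifurcation on $[f(a+1)\le N]$ makes explicit the two sub-cases that the paper's contradiction argument collapses into one.
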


\begin{proof} It is sufficient to show for $x=s_i$, $ i\in\lbr
m+n\rbr\setminus\{r_{1},\dots,r_d\}$.  We need the following
characterization \cite[(2.3)]{CLW2} of $\preceq_{\bf b}$: for
$g,h\in\Z^{m+n}$,
\begin{equation*}
g\preceq_{\bf b}h\; \Leftrightarrow\; \sharp_{\bf b}(g,a,j)\le
\sharp_{\bf b}(h,a,j),\;
 \forall a\in\Z,j\in \lbr m+n \rbr, \text{
with equality for $j=1$,}
\end{equation*}
where
$$
\sharp_{\bf b}(h,a,j):=\sum_{j\le i,h(i)\le a}(-1)^{b_i}, \quad
\text{  for } h\in\Z^{m+n}.
$$
It is enough to show that $\sharp_{\bf b}(g\cdot s_i,a,i+1)\le
\sharp_{\bf b}(f,a,i+1)$ for all $a\in \Z$. We may assume that
$(-1)^{b_i}g(i)<(-1)^{b_i}g(i+1)$. Otherwise, we have $g\cdot
s_i\preceq_{\bf b} g\preceq_{\bf b} f$. Suppose that there is  $a
\in \Z$ such that $\sharp_{\bf b}(g\cdot s_i,a,i+1)> \sharp_{\bf
b}(f,a,i+1)$. This implies
\begin{equation*}
\sharp_{\bf b}(g\cdot s_i,a,i+2)=\sharp_{\bf b}(f,a,i+2) \quad
\text{and }\quad
\begin{cases}
g(i)\le a<f(i+1),&\mbox{for } b_i=0,\\
g(i)> a\ge f(i+1),&\mbox{for } b_i=1.
\end{cases}
\end{equation*}
Since $(-1)^{b_i}f(i)\ge (-1)^{b_i}f(i+1)$ and
$(-1)^{b_i}g(i)<(-1)^{b_i}g(i+1)$, we have
\begin{eqnarray*}
\sharp_{\bf b}(g,a,i)&\ge& \sharp_{\bf b}(f,a,i+2)+1=\sharp_{\bf b}(f,a,i)+1,\quad \text{ for }b _i=0;\\
 \sharp_{\bf b}(g,a,i)&\ge& \sharp_{\bf b}(f,a,i+2)-1 =\sharp_{\bf b}(f,a,i)+2-1, \quad\text{ for }b _i=1.
 \end{eqnarray*}
 Therefore $g\npreceq_{\bf b} f$ which contradicts to our assumption. Hence $g\cdot s_i\preceq_{\bf b} f$.
\end{proof}

\begin{prop}\label{H-module T}
$\widehat{\mathbb T}^{\bf b}$ is an $\mc{H}^{\bf b}$-module.
\end{prop}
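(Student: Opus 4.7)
The plan is to extend the right $\mc{H}^{\bf b}$-action on $\mathbb{T}^{\bf b}$ to $\widehat{\mathbb{T}}^{\bf b}$ by continuity. First, each simple generator $H_i$ (for $i \in \lbr m+n \rbr \setminus \{r_1, \ldots, r_d\}$) acts locally on the standard monomial basis: $M_g \cdot H_i$ is a $\Q(q)$-linear combination of $M_g$ and $M_{g \cdot s_i}$. Hence right-multiplication by $H_i$ is continuous in the linear topology defined by the projections $\pi_k$ and extends automatically to $\widetilde{\mathbb{T}}^{\bf b}$; the substance of the proposition is that $\widehat{\mathbb{T}}^{\bf b}$ is preserved under this extension. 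By $\Q(q)$-linearity, it suffices to check $v \cdot H_i \in \widehat{\mathbb{T}}^{\bf b}$ for a single generator $v = M_f + \sum_{g \prec_{\bf b} f} r_g M_g$ of $\widehat{\mathbb{T}}^{\bf b}$ and each simple generator $H_i$.

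The key input will be a uniform support bound obtained from Lemma~\ref{weakly dominant}. Let $f^+$ denote the weakly dominant representative in the $\mf{S}^{\bf b}$-orbit of $f$, obtained by rearranging each block $(f(r_{k-1}+1), \ldots, f(r_k))$ so as to satisfy \eqref{W Dom}. Writing $f = f^+ \cdot x$ for some $x \in \mf{S}^{\bf b}$ and applying Lemma~\ref{weakly dominant} with $g = f = f^+$, one obtains $f^+ \cdot y \preceq_{\bf b} f^+$ for every $y \in \mf{S}^{\bf b}$; in particular both $f$ and $f \cdot s_i$ lie in $\{h : h \preceq_{\bf b} f^+\}$. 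For every $g \preceq_{\bf b} f$ in the support of $v$, the estimate $g \preceq_{\bf b} f^+$ combined with a second application of Lemma~\ref{weakly dominant} (now with $g$ in place of $g$ and $f^+$ in place of $f$) gives $g \cdot s_i \preceq_{\bf b} f^+$. Consequently every monomial $M_h$ appearing with nonzero coefficient in $v \cdot H_i$ satisfies $h \preceq_{\bf b} f^+$, so the support of $v \cdot H_i$ lies in the single principal downset $\{h : h \preceq_{\bf b} f^+\}$.

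To conclude, I need the fact that any element $u \in \widetilde{\mathbb{T}}^{\bf b}$ whose support lies in a principal downset $\{h : h \preceq_{\bf b} f^+\}$ is already in $\widehat{\mathbb{T}}^{\bf b}$: expanding in the canonical topological basis of Proposition~\ref{prop:can}, triangularity of $\{T_h^{\bf b}\}$ against $\{M_h\}$ forces $u = \sum \widetilde{c}_h T_h^{\bf b}$ to involve only $h \preceq_{\bf b} f^+$, and each $T_h^{\bf b} = M_h + \sum_{k \prec_{\bf b} h} t_{kh}(q) M_k$ is itself an element of the form used to define $\widehat{\mathbb{T}}^{\bf b}$. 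The hard part will be precisely this last step---reconciling the $\Q(q)$-linear span definition of $\widehat{\mathbb{T}}^{\bf b}$ with the topological-basis characterization, in order to legitimize the possibly infinite sum $\sum \widetilde{c}_h T_h^{\bf b}$ and identify it with a bona fide element of $\widehat{\mathbb{T}}^{\bf b}$.
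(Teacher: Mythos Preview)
Your overall strategy---reduce to a single generator $H_i$, pass to the weakly dominant representative $f^+$ of the orbit of $f$, and use Lemma~\ref{weakly dominant} to trap the support of $v\cdot H_i$ inside $\{h:h\preceq_{\bf b}f^+\}$---is exactly the paper's approach. The paper phrases it as: choose $w\in\mf{S}^{\bf b}$ with $f^+:=h\cdot w$ weakly dominant, note $h\preceq_{\bf b}f^+$ by Lemma~\ref{weakly dominant}, and then Lemma~\ref{weakly dominant} again gives $g\cdot s_i\preceq_{\bf b}f^+$ for every $g\preceq_{\bf b}f^+$.

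Where you go astray is in the last step, which you call ``the hard part.'' It is not hard at all, and your proposed route through the canonical basis is both unnecessary and circular: writing $u=\sum_h \widetilde{c}_h T_h^{\bf b}$ as a possibly infinite sum over $h\preceq_{\bf b}f^+$ and then observing each $T_h^{\bf b}$ is a generator of $\widehat{\mathbb{T}}^{\bf b}$ does not place $u$ in the \emph{finite} $\Q(q)$-span that defines $\widehat{\mathbb{T}}^{\bf b}$; you end up needing exactly the statement you are trying to prove. The correct argument is a one-line observation. Given any $v=\sum_{g\preceq_{\bf b}f^+}c_g M_g\in\widetilde{\mathbb{T}}^{\bf b}$, set
\[
w:=M_{f^+}+\sum_{g\prec_{\bf b}f^+}c_g M_g,
\]
which is one of the defining generators of $\widehat{\mathbb{T}}^{\bf b}$. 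Then $v=w+(c_{f^+}-1)M_{f^+}$, and $M_{f^+}$ is itself a generator (take all lower coefficients zero). So $v$ is a $\Q(q)$-linear combination of two generators and lies in $\widehat{\mathbb{T}}^{\bf b}$. This is what the paper means when it says ``$uH_i\in\widehat{\mathbb{T}}^{\bf b}$ follows from Lemma~\ref{weakly dominant} and the fact that $\widetilde{\mathbb{T}}^{\bf b}$ is an $\mc{H}^{\bf b}$-module.''
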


\begin{proof} It is enough to show $uH_i\in \widehat{\mathbb T}^{\bf b}$
for $u=\sum\limits_{g\preceq_{\bf b} h}r_gM_g$ and $i\in\lbr
m+n\rbr\setminus\{r_{1},\dots,r_d\}$.  There is an $w\in \mf{S}^{\bf
b}$ such that $f:=h\cdot w$ satisfies the condition \eqref{W Dom}.
By Lemma~\ref{weakly dominant}, $h=f\cdot w^{-1}\preceq_{\bf b} f$.
Therefore we may assume $u=\sum\limits_{g\preceq_{\bf b} f}r_gM_g$.
Now $uH_i\in \widehat{\mathbb T}^{\bf b}$ follows from
Lemma~\ref{weakly dominant} and the fact that $\widetilde{\mathbb
T}^{\bf b}$ is an $\mc{H}^{\bf b}$-module.
\end{proof}

Let $f\in\mathbb Z^{m+n}$. It is called ${\bf b}$\emph{-dominant}, if the following inequalities hold
for all $k\in \lbr d\rbr$:
\begin{equation*}
 (-1)^{b_k}f(r_{k-1}+1)>\cdots>(-1)^{b_k}f(r_k).
\end{equation*}
It is called ${\bf b}$\emph{-antidominant}, if the following inequalities hold for all $k\in \lbr
d\rbr$:
\begin{equation*}
 (-1)^{b_k}f(r_{k-1}+1)\leq\cdots\leq (-1)^{b_k}f(r_k).
\end{equation*}
The set of all $\bf b$-dominant $f\in\mathbb Z^{m+n}$ is denoted by $\mathbb
Z^{{\bf b},+}$. For $f\in\mathbb Z^{{\bf b},+}$, we define
\begin{equation*}\label{def:Kf}
  K_f:=M_{f\cdot w_0}H_0\in\mathscr{E}^{{\bf b}}.
\end{equation*}
Recall that $w_0$ is defined in \eqref{def:w_0 H_0}. We will also
write $K_f:=0$ if $f\in\mathbb Z^{m+n}\backslash\mathbb Z^{{\bf
b},+}$. The following lemma is a generalization of \cite[Lemma
3.4]{Br}, which implies that $\{K_f\}_{f\in\mathbb Z^{{\bf b},+}}$
forms a basis of $\mathscr{E}^{\bf b}$.

\begin{lem}\label{lem:MfH0}
  Let $f\in\mathbb{Z}^{m+n}$ and $x$ be the unique element of
  minimal length in $\mathfrak{S}^{\bf b}$ such that $f\cdot x$ is $\bf b$-antidominant. Then
\begin{equation*}
  M_fH_0=\begin{cases}
    (-q)^{\ell(x)}K_{f\cdot x w_0}&\ \mbox{if}\ f\cdot x
    w_0\in\mathbb Z^{{\bf b},+},\\
    0 &\ \mbox{otherwise}.
  \end{cases}
\end{equation*}
\end{lem}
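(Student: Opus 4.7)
\emph{Proof proposal.} The plan is a two-step reduction: first express $M_f$ in terms of the antidominant representative $M_{f\cdot x}$, then compute $M_{f\cdot x}H_0$ directly. The main tool is the identity $H_iH_0=-qH_0$ from Lemma~\ref{lem:H0}(i), valid for every simple reflection $s_i\in\mathfrak{S}^{\bf b}$.

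Choose a reduced expression $x=s_{i_1}\cdots s_{i_r}$ that realizes a strictly decreasing Bruhat chain
$f\succ_{\bf b} f\cdot s_{i_1}\succ_{\bf b}\cdots\succ_{\bf b} f\cdot x$; the existence of such an expression is a standard consequence of the minimality of $\ell(x)$ and the exchange condition inside $\mathfrak{S}^{\bf b}$. At each intermediate step $g\cdot s_{i_k}\prec_{\bf b} g$, the first case of the right Hecke action gives $M_{g\cdot s_{i_k}}H_{i_k}=M_g$; iterating from $f\cdot x$ back to $f$ yields $M_f = M_{f\cdot x}H_{i_r}H_{i_{r-1}}\cdots H_{i_1}=M_{f\cdot x}H_{x^{-1}}$. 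Multiplying by $H_0$ on the right and applying Lemma~\ref{lem:H0}(i) $\ell(x)$ times, so that $H_{x^{-1}}H_0=(-q)^{\ell(x)}H_0$, one obtains
$$
M_fH_0 \;=\; (-q)^{\ell(x)}\,M_{f\cdot x}H_0.
$$

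Next I analyze $M_{f\cdot x}H_0$ for the ${\bf b}$-antidominant element $f\cdot x$. If $f\cdot x$ is \emph{strictly} antidominant, then $f\cdot xw_0\in\mathbb Z^{{\bf b},+}$, and since $w_0$ is an involution $K_{f\cdot xw_0}=M_{(f\cdot xw_0)\cdot w_0}H_0=M_{f\cdot x}H_0$, giving the first case of the lemma. Otherwise some simple reflection $s_i\in\mathfrak{S}^{\bf b}$ stabilizes $f\cdot x$, and the second case of the Hecke action yields $M_{f\cdot x}H_i=q^{-1}M_{f\cdot x}$. Combining with $H_iH_0=-qH_0$ via associativity,
$$
-q\,M_{f\cdot x}H_0 \;=\; M_{f\cdot x}(H_iH_0) \;=\; (M_{f\cdot x}H_i)H_0 \;=\; q^{-1}\,M_{f\cdot x}H_0,
$$
forcing $M_{f\cdot x}H_0=0$; since $f\cdot xw_0\notin\mathbb Z^{{\bf b},+}$ in this subcase, the second case of the lemma also follows.

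The only non-routine ingredient is the existence of the strictly decreasing reduced expression for $x$; I would derive this from a direct Coxeter-theoretic argument using the numerical characterization of $\preceq_{\bf b}$ via the quantities $\sharp_{\bf b}(f,a,j)$ appearing in the proof of Lemma~\ref{weakly dominant}, in a manner entirely parallel to the classical symmetric-group case.
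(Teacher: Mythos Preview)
Your argument is correct and is precisely the standard one. The paper does not actually supply a proof of this lemma; it merely states it as a direct generalization of \cite[Lemma~3.4]{Br} and moves on. Your two-step reduction---writing $M_f=M_{f\cdot x}H_{x^{-1}}$ via the first clause of the Hecke action along a decreasing chain, then absorbing $H_{x^{-1}}$ into $H_0$ using Lemma~\ref{lem:H0}(i), and finally killing the non-regular case with the $q^{-1}$ versus $-q$ eigenvalue trick---is exactly how this is done in the classical setting and is what Brundan's proof amounts to.

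One small remark: the ``non-routine ingredient'' you flag at the end is in fact entirely routine and does not require the $\sharp_{\bf b}$ machinery. Since $\mathfrak{S}^{\bf b}=\prod_k\mathfrak{S}_{I_k}$ and the action on $\Z^{m+n}$ decomposes blockwise, the minimal-length $x$ is the product over blocks of the minimal-length permutations sorting each $f_{I_k}$; for a single symmetric group it is a standard fact that \emph{every} reduced expression of the minimal sorting permutation yields a strictly decreasing chain (each simple reflection in the expression is a ``good'' swap). So you may simply invoke this rather than re-deriving it from the combinatorics of $\preceq_{\bf b}$.
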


Let $\widehat{\mathscr{E}}^{\bf b}=\widehat{\mathbb T}^{\bf b}H_0$.
By Proposition~\ref{H-module T}, Lemma~\ref{weakly dominant} and \ref{lem:MfH0},
$\widehat{\mathscr{E}}^{\bf b}$ is a subspace of
$\widehat{\mathbb{T}}^{{\bf b}}$ spanned by elements of the form
\begin{equation*}\label{eq:K}
K_f+\sum_{g\prec_{{\bf b}} f,\ g\in \mathbb Z^{{\bf b},+}}r_gK_g,\quad\
f\in \mathbb Z^{{\bf b},+}\ \mbox{and}\ r_g\in\mathbb{Q}(q).
\end{equation*}

The bar involution $\psi$ on $\widehat{\mathbb T}^{\bf b}$ leaves
$\widehat{\mathscr{E}}^{\bf b}$ invariant by Lemma~\ref{lem:H0}(ii)
and Proposition~\ref{X-bar-H}.  By Lemma~\ref{weakly dominant} and
\ref{lem:MfH0}, we have
$$
\overline{K_f}=K_f+\sum_{g\prec_{{\bf b}}f,\ g\in \mathbb Z^{{\bf b},+}}s_{gf}(q)K_g, \quad \hbox{for all
${f\in\mathbb Z^{{\bf b},+}}$,}
$$
where $s_{gf}(q)\in\mathbb{Z}[q,q^{-1}]$. By \cite[Lemma 3.8]{CLW2} and
Lemma~\ref{th:finite}, we have the following proposition.
\begin{prop}
The $\mathbb{Q}(q)$-vector space $\widehat{\mathscr{E}}^{\bf b}$ has
unique bar-invariant topological bases
$\{\mc{U}_f|f\in\mathbb{Z}^{{\bf b},+}\}$ and
$\{L_f|f\in\mathbb{Z}^{{\bf b},+}\}$ such that
\begin{equation*}
  \mc{U}_f=K_f+\sum_{g\prec_{\bf b} f,\ g\in\mathbb Z^{{\bf b},+}}u_{gf}(q)K_g,
  \ \ \ \ L_f=K_f+\sum_{g\prec_{\bf b} f,\ g\in\mathbb Z^{{\bf b},+}}\ell_{gf}(q)K_g,
\end{equation*}
with $u_{gf}(q)\in q\mathbb{Z}[q]$ and $\ell_{gf}(q)\in
q^{-1}\mathbb{Z}[q^{-1}]$ for $g\prec_{\bf b} f$. (We will also
write $u_{ff}(q)=\ell_{ff}(q)=1$, $u_{gf}(q)=\ell_{gf}(q)=0$ for
$g\npreceq_{\bf b} f$ or $g\notin\mathbb Z^{{\bf b},+}$).
\end{prop}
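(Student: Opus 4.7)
The plan is to deduce the proposition directly from the abstract Lusztig-style lemma \cite[Lemma 3.8]{CLW2}, which guarantees the existence and uniqueness of canonical and dual canonical topological bases for any $\mathbb{Q}(q)$-vector space equipped with (a) a topological basis indexed by a poset satisfying the finite interval property, and (b) an antilinear involution acting unitriangularly on that basis with coefficients in $\mathbb{Z}[q,q^{-1}]$. Thus the task reduces to checking that $\widehat{\mathscr{E}}^{\bf b}$, together with the basis $\{K_f\mid f\in\mathbb{Z}^{{\bf b},+}\}$ and the restriction of $\psi$, satisfies these hypotheses.

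First, I would record that the preceding discussion has already produced the key input: since $\widehat{\mathscr{E}}^{\bf b}=\widehat{\mathbb T}^{\bf b}H_0$, Lemma~\ref{lem:H0}(ii) together with Proposition~\ref{X-bar-H} gives that $\psi$ leaves $\widehat{\mathscr{E}}^{\bf b}$ stable, and applying $\psi$ to $K_f=M_{f\cdot w_0}H_0$ followed by Lemma~\ref{weakly dominant} and Lemma~\ref{lem:MfH0} yields the unitriangular expansion
\[
\overline{K_f}=K_f+\sum_{g\prec_{\bf b}f,\,g\in\mathbb{Z}^{{\bf b},+}}s_{gf}(q)\,K_g,\qquad s_{gf}(q)\in\mathbb{Z}[q,q^{-1}],
\]
which is condition (b). Here the key point is that the "rewriting by $\mathfrak{S}^{\bf b}$" used in Lemma~\ref{lem:MfH0} only produces indices $g\cdot x\preceq_{\bf b}f$ by Lemma~\ref{weakly dominant}, and those indices that survive in the final expansion lie in $\mathbb{Z}^{{\bf b},+}$.

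Second, I would verify condition (a). The indexing set $\mathbb{Z}^{{\bf b},+}$ is a subset of $(\mathbb{Z}^{m+n},\preceq_{\bf b})$, and the finite interval property for $(\mathbb{Z}^{m+n},\preceq_{\bf b})$ established in Lemma~\ref{th:finite} passes to the subposet; in particular, for each $f\in\mathbb{Z}^{{\bf b},+}$ the set $\{g\in\mathbb{Z}^{{\bf b},+}\mid g\preceq_{\bf b}f\}$ is finite. The topology on $\widehat{\mathscr{E}}^{\bf b}$ is the one inherited from $\widetilde{\mathbb{T}}^{\bf b}$, with respect to which $\{K_f\}$ is clearly a topological basis.

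With these two inputs in hand, I would apply \cite[Lemma 3.8]{CLW2}, once with the normalization $q\mathbb{Z}[q]$ to produce the unique bar-invariant $\mc U_f=K_f+\sum_{g\prec_{\bf b}f}u_{gf}(q)K_g$ with $u_{gf}(q)\in q\mathbb{Z}[q]$, and once with the normalization $q^{-1}\mathbb{Z}[q^{-1}]$ to produce the unique bar-invariant $L_f=K_f+\sum_{g\prec_{\bf b}f}\ell_{gf}(q)K_g$ with $\ell_{gf}(q)\in q^{-1}\mathbb{Z}[q^{-1}]$. The conventions $u_{ff}=\ell_{ff}=1$ and $u_{gf}=\ell_{gf}=0$ for $g\not\preceq_{\bf b}f$ or $g\notin\mathbb{Z}^{{\bf b},+}$ are then formal. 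There is no genuine obstacle in this argument: all of the nontrivial work (the triangularity of $\psi$ on $K_f$ and the finite interval property) has already been assembled in Section~\ref{Sec:Can E}, so the proof collapses to citing the abstract lemma.
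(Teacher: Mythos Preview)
Your approach is exactly the paper's: the paper records the unitriangularity of $\psi$ on $\{K_f\}$ in the paragraph preceding the proposition and then writes ``By \cite[Lemma 3.8]{CLW2} and Lemma~\ref{th:finite}, we have the following proposition.'' Your write-up spells this out correctly.

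One slip to correct: you assert that the finite interval property yields ``for each $f\in\mathbb{Z}^{{\bf b},+}$ the set $\{g\in\mathbb{Z}^{{\bf b},+}\mid g\preceq_{\bf b}f\}$ is finite.'' This does not follow and is in general false; the down-set of $f$ can be infinite, which is precisely why one must work in the completion $\widehat{\mathscr{E}}^{\bf b}$ rather than in $\mathscr{E}^{\bf b}$, and why the proposition speaks of \emph{topological} bases. Fortunately \cite[Lemma 3.8]{CLW2} only needs the genuine finite interval property (finiteness of $\{h : g\preceq_{\bf b}h\preceq_{\bf b}f\}$), which you state correctly in the first half of that sentence. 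Simply delete the erroneous ``in particular'' clause and the argument is clean.
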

  $\{\mc{U}_f|f\in\mathbb{Z}^{{\bf b},+}\}$ and $\{L_f|f\in\mathbb{Z}^{{\bf b},+}\}$
  are called the \emph{canonical basis} and \emph{dual canonical basis} for
  $\widehat{\mathscr{E}}^{\bf b}$, respectively.
  Using Brundan's arguments in the paragraph before \cite[Lemma 3.8]{Br},
  the elements $L_f$ and the polynomials $\ell_{gf}(q)$
  defined here are exactly the same as in Proposition~\ref{prop:can},
  for ${f,g\in\mathbb Z^{{\bf b},+}}$. The same proof of  \cite[Lemma 3.8]{Br} applies here to have the following proposition.

\begin{prop}
  For $f\in\mathbb{Z}^{{\bf b},+}$, $\mc{U}_f=T_{f\cdot w_0}H_0$.
\end{prop}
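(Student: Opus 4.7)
The plan is to verify that $T_{f\cdot w_0}H_0$ satisfies the two defining conditions of $\mc{U}_f$ from the preceding proposition---bar-invariance together with an expansion of the form $K_f+\sum_{g\prec_{\bf b} f,\,g\in\mathbb Z^{{\bf b},+}}c_g(q)K_g$ with $c_g(q)\in q\mathbb Z[q]$---and then invoke the uniqueness of $\mc U_f$.

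Bar-invariance is routine: combine $\overline{T_{f\cdot w_0}}=T_{f\cdot w_0}$ (Proposition~\ref{prop:can}), $\overline{H_0}=H_0$ (Lemma~\ref{lem:H0}(ii)), and the compatibility $\overline{uH}=\bar u\,\bar H$ of Proposition~\ref{X-bar-H}, extended by continuity from $\mathbb T^{\bf b}$ to $\widehat{\mathbb T}^{\bf b}$ (valid since both the bar map and the $\mc H^{\bf b}$-action are continuous with respect to the linear topology). Membership $T_{f\cdot w_0}H_0\in\widehat{\mathscr{E}}^{\bf b}=\widehat{\mathbb T}^{\bf b}H_0$ is immediate from the definition.

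For the triangular expansion I would use \eqref{T-L} to write
\begin{equation*}
T_{f\cdot w_0}H_0=M_{f\cdot w_0}H_0+\sum_{h\prec_{\bf b} f\cdot w_0}t_{h,f\cdot w_0}(q)\,M_hH_0
\end{equation*}
and apply Lemma~\ref{lem:MfH0} term by term. The leading term yields exactly $K_f$ (take $x=e$, which is permitted since $f\cdot w_0$ is $\bf b$-antidominant). Each remaining term is either $0$ or $(-q)^{\ell(x_h)}K_g$ with $g:=h\cdot x_h w_0\in\mathbb Z^{{\bf b},+}$; combined with $t_{h,f\cdot w_0}(q)\in q\mathbb Z[q]$, every off-diagonal coefficient of $K_g$ then lies in $q\mathbb Z[q]$, as required.

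The substantive point is to show that only $g\prec_{\bf b} f$ can occur. The $\sharp_{\bf b}$-computation recorded in the proof of Lemma~\ref{weakly dominant} shows that a simple transposition in $\mf{S}^{\bf b}$ that moves an element closer to $\bf b$-antidominant form strictly decreases it in $\preceq_{\bf b}$; hence the $\bf b$-antidominant (resp.\ $\bf b$-dominant) element of each $\mf{S}^{\bf b}$-orbit is its $\preceq_{\bf b}$-minimum (resp.\ maximum). Applying this twice produces the chain $g\cdot w_0\preceq_{\bf b} h\preceq_{\bf b} f\cdot w_0\preceq_{\bf b} f$. Since $f$ is $\bf b$-dominant and hence weakly dominant in the sense of Lemma~\ref{weakly dominant}, that lemma applied to $g\cdot w_0\preceq_{\bf b} f$ with $x=w_0$ yields $g=(g\cdot w_0)\cdot w_0\preceq_{\bf b} f$; strictness is forced by $h\neq f\cdot w_0$. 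This interplay between the $\preceq_{\bf b}$-extremes of an orbit and the right $w_0$-action is the main obstacle---everything else is mechanical---and once it is in place the uniqueness of $\mc U_f$ in the preceding proposition forces $\mc{U}_f=T_{f\cdot w_0}H_0$.
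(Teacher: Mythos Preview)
Your argument is correct and follows essentially the same approach that the paper intends: the paper simply states that ``the same proof of \cite[Lemma~3.8]{Br} applies here,'' and Brundan's proof proceeds exactly as you do---verify that $T_{f\cdot w_0}H_0$ is bar-invariant and has a $(K_g)$-expansion of the required triangular form with off-diagonal coefficients in $q\Z[q]$, then invoke uniqueness. Your handling of the orbit-extremum step (antidominant is $\preceq_{\bf b}$-minimal, dominant is $\preceq_{\bf b}$-maximal, hence $g\cdot w_0\preceq_{\bf b}h\prec_{\bf b}f\cdot w_0\preceq_{\bf b}f$ and then Lemma~\ref{weakly dominant} lifts this to $g\preceq_{\bf b}f$) makes explicit the one point Brundan leaves to the reader, and your strictness argument ($h$ in the orbit of $f$ forces $h=f\cdot w_0$ since the latter is the orbit minimum) is the right way to rule out $g=f$.
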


\section{Inversion formula}\label{formula}
In this section, we completely follow the strategy as in
\cite[Section 2-i]{Br} defining a symmetric bilinear form on $\widehat{\mathbb{T}}^{{\bf b}}$ and showing that the canonical
basis $\{T^{\bf b}_f|f\in\mathbb{Z}^{m+n}\}$ and the dual canonical basis
$\{L^{\bf b}_f|f\in\mathbb{Z}^{m+n}\}$ are dual with respect to the symmetric bilinear form.  As a consequence, we give an inversion formula connecting the coefficients of the canonical basis
and that of the dual canonical basis of $\widehat{\mathbb{T}}^{{\bf b}}$  expanded in terms of the standard monomial basis of $\mathbb T^{{\bf b}}$. The proof of the bilinear form being symmetric here is different from \cite{Br}. Using the Brundan-Kazhdan-Lusztig conjecture proved in \cite{CLW2, BLW} and the Brundan-Kazhdan-Lusztig polynomials $t^{\bf b}_{gf}(q)\in\Z_+[q]$ (see Remark~\ref{positive}), we show that there are only finitely many $t^{{\bf b}}_{gf}(q)\not=0$ for a fixed $g\in\mathbb{Z}^{m+n}$ or a fixed $f\in\mathbb{Z}^{m+n}$. In particular, the canonical basis $\{T^{\bf b}_f \,|\, f\in\Z^{m+n}\}$ is contained in  $\mathbb{T}^{{\bf b}}$. Also every element in the standard monomial basis of $\mathbb T^{{\bf b}}$ can be written as a finite sum of the elements in the dual canonical basis.

Let $(\cdot,\cdot)$ be the symmetric bilinear form on
$\mathbb{T}^{{\bf b}}$ defined by
\begin{equation*}\label{def:bilinear T}
(M_f,M_g)=\delta_{f,g},\qquad \text{for all } f,
g\in\mathbb{Z}^{m+n}.
\end{equation*}
The bilinear form $(\cdot,\cdot)$ on ${\mathbb{T}}^{{\bf b}}\times
{\mathbb{T}}^{{\bf b}}$ can be extended to a bilinear map
$(\cdot,\cdot)$ on $\widetilde{\mathbb{T}}^{{\bf b}}\times
{\mathbb{T}}^{{\bf b}}$ in an obvious way. Recall that
$\widetilde{\mathbb{T}}^{\bf b}$ is a completion of
$\mathbb{T}^{\bf b}$  defined in
Section~\ref{subsection:Fock space}.

We also define the antilinear map $\sigma:\mathbb{T}^{{\bf
b}}\longrightarrow\mathbb{T}^{{\bf b}}$ by
\begin{equation}\label{def:sigma on T}
\ \ \ \sigma(M_f)=M_{-f}, \qquad \text{for all }
f\in\mathbb{Z}^{m+n}.
\end{equation}
The antilinear map $\sigma$ can be extended to the antilinear map
$\sigma : \widetilde{\mathbb{T}}^{{\bf
b}}\longrightarrow\widetilde{\mathbb{T}}^{{\bf b}}$ but it may not
send elements in $\widehat{\mathbb{T}}^{{\bf b}}$ to
$\widehat{\mathbb{T}}^{{\bf b}}$.

Following from  \cite[Section 2-f]{Br}, let $\tau:
\UU\longrightarrow \UU$ be the antiautomorphism defined by
\begin{eqnarray}\label{1}\nonumber
\tau(E_a)=q^{-1}K_{a+1,a}F_a,\quad \tau(F_a)=qE_aK_{a,a+1},\quad
  \tau(K_a)=K_a,\quad \text{for all } f\in\mathbb{Z}^{m+n}.
\end{eqnarray}
Note that $\tau$ is an involution on $\UU$.

The following lemma is a generalization of \cite[Lemma 2.9, Theorem
2.14(ii)]{Br}. Recall that $\widetilde{\mathbb{T}}^{{\bf b}}$ is a
$(\UU, \mc{H}^{{\bf b}})$-bimodule.

\begin{lem}\label{th3}
For all $u\in\widetilde{\mathbb{T}}^{{\bf b}}$,
$v\in\mathbb{T}^{{\bf b}}$, $X\in \UU$ and $H\in\mc{H}^{{\bf b}}$,
we have
  \begin{itemize}
    \item[(i)] $(XuH,v)=(u,\tau(X)v\tau(H)),$
    \item[(ii)]
    $\sigma(XuH)=\tau(\overline{\sigma(X)})\sigma(u)\overline{H}$,
      \end{itemize}
\end{lem}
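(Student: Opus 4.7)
The plan is to reduce both (i) and (ii) to checks on the generators of $\UU$ and of $\mc{H}^{\bf b}$ acting on a single tensor factor, and then to propagate to the full tensor product via the coproduct $\Delta$ and the explicit right $\mc{H}^{\bf b}$-action on standard monomials; the passage from $\mathbb{T}^{\bf b}$ to $\widetilde{\mathbb{T}}^{\bf b}$ is handled by continuity.

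First, for fixed $v \in \mathbb{T}^{\bf b}$, $X \in \UU$, and $H \in \mc{H}^{\bf b}$, both sides of (i) are continuous $\Q(q)$-linear functionals of $u \in \widetilde{\mathbb{T}}^{\bf b}$: $(\cdot, v)$ is continuous because $v$ has finite support in the $M_g$-basis, and the actions of $X$ and $H$ are continuous by the observation at the end of Section~\ref{subsection:Fock space}. Both sides of (ii) are continuous antilinear maps of $u$ for analogous reasons (note that $\sigma$ preserves each truncation $\mathbb{T}^{\bf b}_{\le|k|}$). So it suffices to prove the identities for $u = M_f$; and by the (anti)multiplicativity of $\tau$, $\sigma$, and the bar involutions it further suffices to verify each identity for $X$ one of the generators $E_a, F_a, K_a^{\pm 1}$ and for $H = H_i$.

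For (i), I split into $(Xu, v) = (u, \tau(X)v)$ and $(uH, v) = (u, v\tau(H))$. The $\UU$-adjointness is first verified on the single factors $\mathbb{V}$ and $\mathbb{W}$ directly from the action formulas and the definition of $\tau$, and then propagated through $\Delta$ using the identity $\Delta\circ\tau = (\tau\otimes\tau)\circ\Delta$ on generators. This identity is a short direct computation; for instance $\Delta(\tau(E_a)) = q^{-1}K_{a+1,a}F_a \otimes K_{a+1,a} + 1 \otimes q^{-1}K_{a+1,a}F_a = (\tau\otimes\tau)(\Delta(E_a))$, after using $K_{a+1,a}K_{a,a+1} = 1$. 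The Hecke adjointness reduces to the claim that $H_i$ acts by a symmetric matrix on the standard-monomial basis, which is immediate from the three-case definition of the right action together with $\tau(H_i) = H_i$.

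For (ii), I first note that the Bruhat order is reversed under negation, i.e., $f \preceq_{\bf b} g$ iff $-f \succeq_{\bf b} -g$; this is because the map $\varepsilon_r \mapsto \varepsilon_{-r}$ reverses the standard partial order on the weight lattice $\texttt{P}$. With this in hand, the Hecke identity $\sigma(M_fH_i) = \sigma(M_f)\overline{H_i}$ becomes a three-case verification using $\overline{H_i} = H_i^{-1} = H_i + (q-q^{-1})$ and the antilinearity of $\sigma$: the case $f \prec_{\bf b} f\cdot s_i$ on the left matches the case $-f \succ_{\bf b} -f\cdot s_i$ on the right, and the cancellations involving $(q-q^{-1})$ go through cleanly. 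For the $\UU$-part I verify $\sigma(X v_b) = \tau(\overline{\sigma(X)})\sigma(v_b)$ on $\mathbb{V}$ (and its analogue on $\mathbb{W}$) for each generator; for instance $\sigma(E_a v_b) = \delta_{a+1,b} v_{-a}$ equals $\tau(E_{-1-a}) v_{-b} = q^{-1} K_{-a,-1-a} F_{-1-a} v_{-b} = \delta_{a+1,b} v_{-a}$, and $F_a$, $K_a^{\pm 1}$ are analogous. Propagation to tensor products uses the coproduct compatibility of the composite anti-homomorphism $\tau\circ\overline{\cdot}\circ\sigma$, verified on generators in the same manner. The main obstacle is this coproduct bookkeeping for both (i) and (ii); once it is in place, the full statements follow by multiplicative induction in $X$ and $H$, and continuity then propagates the identities from $u \in \mathbb{T}^{\bf b}$ to arbitrary $u \in \widetilde{\mathbb{T}}^{\bf b}$.
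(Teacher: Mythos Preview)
Your proposal is correct and follows essentially the same approach as the paper's proof: verify the identities on single tensor factors for the $\UU$-generators, check the $\mc{H}^{\bf b}$-part directly on standard monomials, propagate through the tensor product using that $\tau$ and $\overline{\phantom{x}}\circ\sigma$ are coalgebra homomorphisms, and then extend from $\mathbb{T}^{\bf b}$ to $\widetilde{\mathbb{T}}^{\bf b}$ by continuity. You have simply made explicit the computations (the coproduct identity $\Delta\circ\tau=(\tau\otimes\tau)\circ\Delta$, the symmetry of the $H_i$-matrix, the Bruhat-reversal under $f\mapsto -f$) that the paper leaves to the reader.
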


\begin{proof}  Let us first show that the lemma holds for $u\in\mathbb{T}^{{\bf b}}$. These are all checked directly for $\mc{H}^{{\bf b}}$. It is easily to check these statements hold for $\UU$ in the simple case $m+n=1$.
 Since $\tau$ and $-\circ\sigma$ are coalgebra automorphisms, the
 lemma follows by induction on $m+n$.

 Now we assume $u\in\widetilde{\mathbb{T}}^{{\bf b}}$. Part (i) follows from the first part of the proof and continuity of the actions of $X$ and $H$. Part (ii) follows from the first part of the proof and the uniqueness of the extension of a given  continuous map  on ${\mathbb{T}}^{{\bf b}}$ to  ${\widehat{\mathbb{T}}}^{{\bf b}}$ .
\end{proof}

Analogous to \cite[(2.20)]{Br},  we define the bilinear form
$\langle\cdot,\cdot\rangle$ on $\widehat{\mathbb{T}}^{{\bf b}}$ by
\begin{equation}\label{8}
 \langle u,v\rangle:=\sum_{g\in \Z^{m+n}}(u,M_g)( \sigma(\overline{v}),M_{g}),\qquad \forall\, u,v\in \widehat{\mathbb{T}}^{{\bf b}}.
\end{equation}
It is clear that the bilinear form
$\langle\cdot,\cdot\rangle$ can be interpreted as
 $$
\langle u,v\rangle=\sum_{g\in
\Z^{m+n}}(u,M_g)\overline{(\overline{v},M_{-g})},\qquad \forall\,
u,v\in \widehat{\mathbb{T}}^{{\bf b}}.
 $$

Let $u=\sum\limits_{g\preceq_{\bf b} f}r_gM_g,\, v=\sum\limits_{l\preceq_{\bf b} h}s_l M_l\in \widehat{\mathbb{T}}^{{\bf b}}$. We have
$\overline{v}=\sum\limits_{l\preceq_{\bf b} h}s'_l M_l$. By
Lemma~\ref{th:finite}, there only finitely many $g$ such that $M_g$
is involved in $u$ and $M_{-g}$ is involved in $\overline{v}$. This
implies all but finitely many terms on the right hand side of the
equation  \eqnref{8} are zero. Therefore  the bilinear form
$\langle\cdot,\cdot\rangle$ is well defined on $\widehat{\mathbb{T}}^{{\bf b}}$.

From the arguments above, we also have the following.

\begin{lem}\label{th2}
 For $u=\sum\limits_{g\preceq_{\bf b} f}r_gM_g,\, v=\sum\limits_{l\preceq_{\bf b} h}s_l M_l\in \widehat{\mathbb{T}}^{{\bf b}}$, we have
  \begin{equation}\label{9}
    \langle u,v\rangle
=\sum_{g,l}r_g s_l\langle M_g,M_l\rangle.
\end{equation}
Moreover, the right side of \eqref{9} is a finite sum.
\end{lem}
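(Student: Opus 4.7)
The plan is to verify \eqref{9} by unfolding the defining formula \eqref{8} of $\langle\cdot,\cdot\rangle$ and to justify the resulting interchange of sums by means of the finiteness observation stated in the paragraph immediately preceding the lemma.

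I would first settle the finiteness of the right-hand side of \eqref{9}, because it is needed to justify the rearrangement of otherwise infinite sums in the expansion step. A term $r_g s_l \langle M_g, M_l\rangle$ is nonzero only when $g \preceq_{\bf b} f$, $l \preceq_{\bf b} h$, and $M_g$ occurs with nonzero coefficient in $\sigma(\overline{M_l})$. By \propref{prop:Mfbar} the last condition forces $-g \preceq_{\bf b} l$, whence also $-g \preceq_{\bf b} h$. These are exactly the constraints used just before the lemma to conclude, via \lemref{th:finite}, that only finitely many such $g \in \Z^{m+n}$ can occur; and for each such $g$, the interval $\{l : -g \preceq_{\bf b} l \preceq_{\bf b} h\}$ is finite by a second application of \lemref{th:finite}.

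With finiteness in hand, I would verify \eqref{9} by direct calculation from \eqref{8}. Since $(M_g,M_{g'})=\delta_{g,g'}$, we have $(u,M_{g'})=r_{g'}$. The composite $\sigma\circ\psi$ is linear (being the composition of two antilinear maps) and continuous, so
\[
\sigma(\overline{v}) \;=\; \sum_{l\preceq_{\bf b} h} s_l\,\sigma(\overline{M_l}),
\]
which gives $(\sigma(\overline{v}),M_{g'})=\sum_l s_l(\sigma(\overline{M_l}),M_{g'})$, a sum which is finite for each fixed $g'$ by the previous paragraph. Substituting into \eqref{8} and exchanging the two outer sums, now justified,
\[
\langle u,v\rangle \;=\; \sum_{g',l} r_{g'} s_l (\sigma(\overline{M_l}),M_{g'}) \;=\; \sum_{g,l} r_g s_l \langle M_g, M_l\rangle,
\]
the last equality being nothing but \eqref{8} specialised to $u = M_g$ and $v = M_l$ after relabelling $g'$ as $g$.

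The only obstacle is organisational: the sum interchange cannot be invoked before finiteness has been established, so the logical order must be as above. Apart from that the proof is pure bookkeeping on top of \propref{prop:Mfbar}, \lemref{th:finite}, and the continuity of the bar and $\sigma$ operators.
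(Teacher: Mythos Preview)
Your proof is correct and matches the paper's approach: the paper's own proof is the single sentence ``From the arguments above, we also have the following,'' and you have faithfully spelled out those arguments (finiteness via \lemref{th:finite} and \propref{prop:Mfbar}, then termwise expansion of \eqref{8}). The only quibble is the word ``continuous'' for $\sigma\circ\psi$ --- what you actually use, and what holds, is that the extension of $\psi$ to $\widehat{\mathbb{T}}^{\bf b}$ commutes coefficient-wise with the infinite sums defining elements of $\widehat{\mathbb{T}}^{\bf b}$, which is precisely how that extension is constructed, so there is no gap.
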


\begin{prop}\label{th:symmetric}
  The bilinear form $\langle\cdot,\cdot\rangle$ defined in \eqref{8}
  is symmetric.
\end{prop}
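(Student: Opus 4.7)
The plan is to reduce the claim to a statement about standard monomials and then argue by induction on $m+n$, with Proposition~\ref{sigma-theta} supplying the key compatibility of $\sigma$ with the quasi-$\mathcal{R}$-matrix.

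By Lemma~\ref{th2} it suffices to verify $\langle M_f,M_h\rangle = \langle M_h,M_f\rangle$ for all $f,h\in\Z^{m+n}$. Unwinding the definition \eqref{8}, this is equivalent to the identity
$$(\sigma(\overline{M_h}), M_f) \;=\; (\sigma(\overline{M_f}), M_h).$$
When $m+n=1$ the bar involution is the identity on $\VV$ and $\WW$, so both sides collapse to $\delta_{f+h,0}$ and the base case is immediate. I would proceed by induction on $m+n$.

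For the inductive step, split ${\bf b}=({\bf b}_1,{\bf b}_2)$ and write $M_f=M_{f_1}\otimes M_{f_2}$, $M_h=M_{h_1}\otimes M_{h_2}$. Using $\overline{M_h}=\Theta(\overline{M_{h_1}}\otimes\overline{M_{h_2}})$ with $\Theta_\mu=\sum_i b'_{\mu,i}\otimes b''_{\mu,i}$, the fact that $\sigma$ acts as $\sigma\otimes\sigma$ on the tensor product (since $M_{-f}$ factors), Lemma~\ref{th3}(ii) applied in each tensor factor, and the adjunction $(\tau(X)u,v)=(u,Xv)$ from Lemma~\ref{th3}(i), I would expand
$$(\sigma(\overline{M_h}), M_f) = \sum_{\mu,i}\bigl(\sigma(\overline{M_{h_1}}), \overline{\sigma(b'_{\mu,i})} M_{f_1}\bigr)\bigl(\sigma(\overline{M_{h_2}}), \overline{\sigma(b''_{\mu,i})} M_{f_2}\bigr).$$
The inductive hypothesis then swaps the arguments in each inner product. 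Combining Lemma~\ref{th3}(ii) with the involutivity $\sigma^2=\mathrm{id}$ on $\UU$ yields the commutation rule $\sigma(\overline{\,\overline{\sigma(X)}\,u}) = \tau(\overline{X})\,\sigma(\overline{u})$, and one more application of Lemma~\ref{th3}(i) rewrites the expression as
$$(\sigma(\overline{M_h}), M_f) = \sum_{\mu,i}\bigl(\sigma(\overline{M_{f_1}}), \overline{b'_{\mu,i}} M_{h_1}\bigr)\bigl(\sigma(\overline{M_{f_2}}), \overline{b''_{\mu,i}} M_{h_2}\bigr).$$

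A parallel direct computation produces
$$(\sigma(\overline{M_f}), M_h) = \sum_{\nu,j}\bigl(\sigma(\overline{M_{f_1}}), \overline{\sigma(c'_{\nu,j})} M_{h_1}\bigr)\bigl(\sigma(\overline{M_{f_2}}), \overline{\sigma(c''_{\nu,j})} M_{h_2}\bigr),$$
where $\Theta_\nu=\sum_j c'_{\nu,j}\otimes c''_{\nu,j}$. Equality of the two displayed sums is exactly the content of Proposition~\ref{sigma-theta}: since $(\sigma\otimes\sigma)(\Theta_\nu)=\Theta_{\sigma(\nu)}$ and $\sigma$ permutes $\Z_+\Pi$, reindexing $\mu=\sigma(\nu)$ matches the right-hand sides termwise. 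The main obstacle I expect is purely bookkeeping, namely tracking how $\sigma$ (antilinear on the Fock space, antiautomorphism on $\UU$), the bar involutions on $\UU$ and on $\mathbb T^{\bf b}$, and the antiautomorphism $\tau$ compose when transported across tensor factors, in order to produce an expression to which Proposition~\ref{sigma-theta} directly applies.
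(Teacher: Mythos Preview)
Your proposal is correct and follows essentially the same route as the paper's proof: reduction to monomials via Lemma~\ref{th2}, induction on $m+n$, expansion through the quasi-$\mathcal R$-matrix, and the combination of Lemma~\ref{th3} with Proposition~\ref{sigma-theta}. The only slip is in the derivation of your commutation rule $\sigma(\overline{\,\overline{\sigma(X)}\,u})=\tau(\overline{X})\,\sigma(\overline{u})$: to pass from $\overline{\,\overline{\sigma(X)}\,u\,}$ to $\sigma(X)\,\overline{u}$ you need Proposition~\ref{X-bar-H} (the compatibility $\overline{Xv}=\overline{X}\,\overline{v}$), not just Lemma~\ref{th3}(ii); the paper invokes exactly this at its step~(2).
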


\begin{proof}
By Lemma \ref{th2}, it is sufficient to show
  \begin{equation}\label{12}
    \langle M_f,M_g\rangle=\langle M_g,M_f\rangle  \qquad \text{ for all}\
    f,g\in\mathbb{Z}^{m+n}.
\end{equation}
 We will prove \eqref{12} by induction on $m+n$. For $m+n=1$, it
is clear that  \eqref{12} holds. Now we assume that  \eqref{12}
holds for any positive integer smaller than $m+n$. For
$f\in{\Z}^{m+n}$, we write $f':=f_I$ and $f'':=f_{\{m+n\}}$, where
$I:=\lbr m+n-1\rbr$. Then $M_f=M_{f'}\otimes M_{f''}$. From
\eqref{quasi:R:matrix, k} and \eqref{quasi:R:matrix}, we have
$$\Theta=\sum_{\mu\in \Z_+\Pi}\Theta_{\mu}\qquad \text{and} \qquad \Theta_{\mu}=\sum_{i=1}^{\rm{dim}_{\Q(q)} \UU_{\mu}^+ }b_{\mu,i}'\otimes b''_{\mu,i}\in \UU_{\mu}^+\otimes \UU_{\mu}^-.$$
Therefore we have
\begin{eqnarray*}
    \langle M_f,M_g\rangle
        &=&(\sigma(\Theta(\overline{M_{g'}}\otimes\overline{M_{g''}})), M_{f'}\otimes  M_{f''})\\
  &=&\sum_{\mu,i}(\sigma(b'_{\mu,i}\overline{M_{g'}}), M_{f'})  (\sigma(b''_{\mu,i}\overline{M_{g''}}), M_{f''})
\end{eqnarray*}
By Lemma~\ref{th3}, we have that
\begin{equation*}\nonumber
  (\sigma(b'_{\mu,i}\overline{M_{g'}}), M_{f'})=
  (\tau(\overline{\sigma(b'_{\mu,i})})\sigma(\overline{M_{g'}}),M_{f'})=
  (\sigma(\overline{M_{g'}}),\overline{\sigma(b'_{\mu,i})}M_{f'})
\end{equation*} and
\begin{equation*}\nonumber
  (\sigma(b''_{\mu,i}\overline{M_{g''}}), M_{f''})=
  (\tau(\overline{\sigma(b''_{\mu,i})})\sigma(\overline{M_{g''}}), M_{f''})=
  (\sigma(\overline{M_{g''}}),\overline{\sigma(b''_{\mu,i})}M_{f''}).
\end{equation*}
Therefore we have

\begin{eqnarray}\nonumber
\langle M_f,M_g\rangle &=&\sum_{\mu,i}
(\sigma(\overline{M_{g'}}),\overline{\sigma(b'_{\mu,i})}M_{f'})
  (\sigma(\overline{M_{g''}}),\overline{\sigma(b''_{\mu,i})}M_{f''})\\
&=&\sum_{\mu,i}\langle\overline{\sigma(b'_{\mu,i})}M_{f'},M_{g'}\rangle
\langle\overline{\sigma(b''_{\mu,i})}M_{f''},M_{g''}\rangle\nonumber\\
&\stackrel{(1)}{=}&\sum_{\mu,i}\langle
M_{g'},\overline{\sigma(b'_{\mu,i})}M_{f'}\rangle \langle
M_{g''},\overline{\sigma(b''_{\mu,i})}M_{f''}\rangle\nonumber\\
&\stackrel{(2)}{=}&\sum_{\mu,i}(\sigma(\sigma(b'_{\mu,i})\overline{M_{f'}}), M_{g'})(\sigma(\sigma(b''_{\mu,i})\overline{M_{f''}}),M_{g''})\nonumber\\
&=&(\sigma(\sum_{\mu,i}\sigma(b'_{\mu,i})\overline{M_{f'}}\otimes
  \sigma(b''_{\mu,i})\overline{M_{f''}}), M_{g'}\otimes  M_{g''})\nonumber\\
&\stackrel{(3)}{=}&( \sigma(\Theta(\overline{M_{f'}}\otimes\overline{M_{f''}})), M_{g'}\otimes  M_{g''})\nonumber\\
&=&\langle M_g,M_f\rangle.\nonumber
\end{eqnarray}
We have used the induction on $m+n$, Proposition~\ref{X-bar-H} and
Proposition~\ref{sigma-theta} for equalities (1), (2) and (3), respectively.
\end{proof}

\begin{thm} Let $\{T_f|f\in\mathbb{Z}^{m+n}\}$ and
$\{L_f|f\in\mathbb{Z}^{m+n}\}$ are canonical and dual canonical
bases of $\widehat{\mathbb{T}}^b$ respectively. Then
  \begin{equation*}
    \langle L_f,T_{-g}\rangle=\delta_{f,g}\ \ \mbox{for}\ \mbox{any}\
    f,g\in\mathbb{Z}^{m+n}.
  \end{equation*}
\end{thm}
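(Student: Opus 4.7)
The plan is to compute $\langle L_f, T_{-g}\rangle$ directly from the definition \eqnref{8} and then show that the resulting element of $\Q(q)$ is bar-invariant and lies in $q^{-1}\Z[q^{-1}]$ off the diagonal, forcing it to vanish. Since $L_f$ and $T_{-g}$ are bar-invariant,
\[
\langle L_f, T_{-g}\rangle = \sum_h (L_f, M_h)\bigl(\sigma(T_{-g}), M_h\bigr) = \sum_h \ell_{hf}(q)\,\overline{t_{-h,-g}(q)},
\]
after unpacking that $\sigma(T_{-g}) = \sum_k \overline{t_{k,-g}(q)}\, M_{-k}$.

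Next I would identify the effective range of summation. The coefficient $\ell_{hf}(q)$ is nonzero only when $h\preceq_{\bf b} f$, while $t_{-h,-g}(q)$ is nonzero only when $-h\preceq_{\bf b} -g$. A short check using the definition of $\preceq_{\bf b}$ and the involution $\varepsilon_a\mapsto \varepsilon_{-a}$ of $\texttt{P}$ (under which $\varepsilon_a-\varepsilon_{a+1}\mapsto -(\varepsilon_{-a-1}-\varepsilon_{-a})$, reversing the positive cone) shows that $-h\preceq_{\bf b}-g$ is equivalent to $g\preceq_{\bf b} h$. Thus the sum runs over the Bruhat interval $\{h\mid g\preceq_{\bf b}h\preceq_{\bf b} f\}$, which is finite by \lemref{th:finite}. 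In particular the whole sum vanishes whenever $g\not\preceq_{\bf b}f$, and when $g=f$ only $h=f$ contributes, giving $\ell_{ff}\,\overline{t_{-f,-f}} = 1$.

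Now suppose $g\prec_{\bf b} f$. I would show each summand lies in $q^{-1}\Z[q^{-1}]$: for the endpoint $h=f$, $\ell_{ff}=1$ while $-f\prec_{\bf b}-g$ forces $t_{-f,-g}\in q\Z[q]$, hence $\overline{t_{-f,-g}}\in q^{-1}\Z[q^{-1}]$; for $h=g$, $\overline{t_{-g,-g}}=1$ while $\ell_{gf}\in q^{-1}\Z[q^{-1}]$; and for intermediate $g\prec_{\bf b} h\prec_{\bf b} f$ both factors sit in $q^{-1}\Z[q^{-1}]$. Hence $\langle L_f,T_{-g}\rangle\in q^{-1}\Z[q^{-1}]$.

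The decisive step is to observe that $\langle L_f,T_{-g}\rangle$ is bar-invariant. Applying \propref{th:symmetric} and expanding $\langle T_{-g},L_f\rangle$ by the same procedure yields $\sum_h t_{h,-g}(q)\,\overline{\ell_{-h,f}(q)}$; the substitution $h\mapsto -h$ turns this into $\sum_h \overline{\ell_{hf}(q)}\,t_{-h,-g}(q)$, which is exactly the bar conjugate of the first expression. Hence $\overline{\langle L_f,T_{-g}\rangle} = \langle L_f,T_{-g}\rangle$. Combined with the $q^{-1}\Z[q^{-1}]$ estimate this forces $\langle L_f,T_{-g}\rangle\in q^{-1}\Z[q^{-1}]\cap q\Z[q]=0$ for $g\prec_{\bf b} f$, completing the proof. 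The main obstacle is the careful bookkeeping that identifies the range of summation (requiring the observation that negation reverses $\preceq_{\bf b}$) and matches the degree bounds with the appropriate signs; everything else then follows from the symmetry of $\langle\cdot,\cdot\rangle$ and the defining properties of $T_f$ and $L_f$.
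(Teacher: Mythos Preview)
Your argument is correct and is precisely the adaptation of Brundan's proof of \cite[Theorem~2.23]{Br} that the paper invokes: expand $\langle L_f,T_{-g}\rangle$ via \eqref{8}, use the degree constraints on $\ell_{gf}$ and $t_{gf}$ together with the finite interval property (\lemref{th:finite}) to reduce to a finite sum lying in $q^{-1}\Z[q^{-1}]$, and then use the symmetry of $\langle\cdot,\cdot\rangle$ (\propref{th:symmetric}) to conclude bar-invariance and hence vanishing off the diagonal. Your verification that $-h\preceq_{\bf b}-g\iff g\preceq_{\bf b}h$ is the only point that needs care beyond Brundan's original setting, and you handle it correctly.
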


\begin{proof}
 We can adapt exactly the same proof of \cite[Theorem 2.23]{Br}.
\end{proof}

\begin{cor} \label{cor:el} We have the following formulas:
\begin{equation*}\label{aux:eq11}
\sum_{h}t_{hf}(q)\ell_{-h,-g}(q^{-1})=\delta_{f,g},\ \ \ \
\sum_{h}\ell_{hf}(q)t_{-h,-g}(q^{-1})=\delta_{f,g}.
\end{equation*}
\end{cor}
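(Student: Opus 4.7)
The plan is to derive both identities by expanding the orthogonality relation $\langle L_f, T_{-g}\rangle = \delta_{f,g}$ (from the theorem just proved) in terms of the standard monomial basis $\{M_h\}$. Writing $T_{-g} = \sum_h t_{h,-g}(q)\, M_h$ and using that $T_{-g}$ is bar-invariant together with the definition \eqref{def:sigma on T} of $\sigma$ (so $\sigma(M_h)=M_{-h}$, with the coefficients replaced by their bar), one gets
$$\sigma(\overline{T_{-g}}) \;=\; \sigma(T_{-g}) \;=\; \sum_h t_{h,-g}(q^{-1})\, M_{-h}.$$
Plugging this along with $L_f = \sum_h \ell_{hf}(q)\, M_h$ into the definition \eqref{8} and using $(M_a,M_b)=\delta_{a,b}$ collapses the double sum to
$$\langle L_f, T_{-g}\rangle \;=\; \sum_{h} \ell_{hf}(q)\, t_{-h,-g}(q^{-1}).$$
Equating with $\delta_{f,g}$ yields the second identity immediately.

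For the first identity, I would invoke the symmetry of $\langle\cdot,\cdot\rangle$ established in Proposition~\ref{th:symmetric}, which gives $\langle T_{-g}, L_f\rangle = \delta_{f,g}$. Running the same expansion with the roles of $L_f$ and $T_{-g}$ interchanged, now using $\sigma(\overline{L_f}) = \sigma(L_f) = \sum_h \ell_{hf}(q^{-1})\, M_{-h}$, produces
$$\sum_{h} t_{h,-g}(q)\, \ell_{-h,f}(q^{-1}) \;=\; \delta_{f,g}.$$
The substitution $(f,g)\mapsto(-g,-f)$ (under which $\delta_{f,g}$ is preserved) converts this into $\sum_h t_{hf}(q)\,\ell_{-h,-g}(q^{-1}) = \delta_{f,g}$, which is the first formula.

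The only point requiring care is the finiteness of the sums, so that the pairing $\langle\cdot,\cdot\rangle$ actually evaluates to a scalar in $\Q(q)$. This has already been secured in Lemma~\ref{th2}, whose justification ultimately reduces to the finite interval property of the Bruhat order $\preceq_{\bf b}$ (Lemma~\ref{th:finite}); in particular, for fixed $f$ and $g$ only finitely many $h$ can simultaneously satisfy $h\preceq_{\bf b}f$ and $-h\preceq_{\bf b}-g$, so both expressions above are honest finite sums. Beyond that, the argument is essentially bookkeeping—there is no genuine obstacle; the real content has already been absorbed into the symmetry proposition and the orthogonality theorem.
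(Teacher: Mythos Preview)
Your argument is correct and is exactly the computation the paper has in mind: the corollary is stated without proof because it is obtained precisely by expanding $\langle L_f,T_{-g}\rangle=\delta_{f,g}$ via \eqref{8}, then using symmetry (Proposition~\ref{th:symmetric}) for the companion identity, just as you do. The only cosmetic remark is that your reindexing $(f,g)\mapsto(-g,-f)$ at the end is unnecessary if one starts instead from $\langle T_f,L_{-g}\rangle=\delta_{f,g}$, but either way the derivation is the intended one.
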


\begin{cor}\label{cor:inv}
For $f\in\mathbb{Z}^{m+n}$,
\begin{equation} \label{aux:eq12}
M_f=\sum_{h\preceq_{\bf b}f}t_{-f,-h}(q^{-1})L_h
 = \sum_{h\preceq_{\bf b}f}\ell_{-f,-h}(q^{-1})T_h.
\end{equation}
\end{cor}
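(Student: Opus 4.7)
The plan is to extract the coefficients directly via the dual pairing from the preceding Theorem, rather than inverting the matrix identities of Corollary~\ref{cor:el} by hand. First I would note that, by the positivity statement of Remark~\ref{positive} (and the associated finiteness results alluded to in the introduction), for fixed $f\in\Z^{m+n}$ only finitely many of the polynomials $t_{-f,-h}(q^{-1})$ and $\ell_{-f,-h}(q^{-1})$ are nonzero, so both right-hand sides of \eqref{aux:eq12} are genuinely finite combinations of topological basis elements; the same positivity forces unique \emph{finite} expansions $M_f=\sum_h c_{h,f}L_h$ and $M_f=\sum_h d_{h,f}T_h$ by triangularity of $\{L_h\}$ and $\{T_h\}$. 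The task then reduces to identifying $c_{h,f}=t_{-f,-h}(q^{-1})$ and $d_{h,f}=\ell_{-f,-h}(q^{-1})$.

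To pin down $c_{g,f}$, I would pair the finite expansion $M_f=\sum_h c_{h,f}L_h$ against $T_{-g}$. Bilinearity together with the preceding Theorem $\langle L_h,T_{-g}\rangle=\delta_{h,g}$ gives $c_{g,f}=\langle M_f,T_{-g}\rangle$. On the other hand, from the definition \eqref{8} of the form, the bar-invariance $\overline{T_{-g}}=T_{-g}$, and the antilinearity of $\sigma$ (which conjugates $q$ to $q^{-1}$), one reads off
\begin{equation*}
\sigma(\overline{T_{-g}})=\sigma\!\Bigl(\sum_k t_{k,-g}(q)M_k\Bigr)=\sum_k t_{k,-g}(q^{-1})M_{-k},
\end{equation*}
whence $\langle M_f,T_{-g}\rangle=(\sigma(\overline{T_{-g}}),M_f)=t_{-f,-g}(q^{-1})$; this yields the first equality of \eqref{aux:eq12}. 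For the second equality, symmetry of $\langle\cdot,\cdot\rangle$ (Proposition~\ref{th:symmetric}) upgrades the Theorem to $\langle T_h,L_{-g}\rangle=\delta_{h,g}$, so pairing $M_f=\sum_h d_{h,f}T_h$ with $L_{-g}$ and running the identical calculation with $L_{-g}$ in place of $T_{-g}$ returns $d_{g,f}=\ell_{-f,-g}(q^{-1})$.

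The main obstacle, in my view, is justifying the finiteness of the $L$- and $T$-expansions of $M_f$, which is what makes the pairing computations term-by-term valid; this rests on the positivity of Remark~\ref{positive} as developed in the introduction. After that point the derivation is a one-line calculation with the bilinear form. As a backup approach, one could instead argue directly from Corollary~\ref{cor:el}: the matrices $(t_{gf}(q))$ and $(\ell_{gf}(q))$ are lower-triangular unipotent with respect to $\preceq_{\bf b}$ with all requisite products well defined by the finite-interval property of Lemma~\ref{th:finite}, so their left and right inverses agree, which promotes Corollary~\ref{cor:el} to the ``transpose'' identities $\sum_h\ell_{g,h}(q)t_{-f,-h}(q^{-1})=\delta_{f,g}$ and $\sum_h t_{g,h}(q)\ell_{-f,-h}(q^{-1})=\delta_{f,g}$; substituting into the right-hand sides of \eqref{aux:eq12} collapses each double sum to $M_f$.
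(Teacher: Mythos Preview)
Your approach is correct and matches the paper's: write $M_f=\sum_h\langle M_f,T_{-h}\rangle L_h$ using the duality from the preceding theorem, then compute $\langle M_f,T_{-h}\rangle=t_{-f,-h}(q^{-1})$ directly from the definition of the form (and similarly for the second equality). The one discrepancy is that you treat finiteness of the $L$- and $T$-expansions as a prerequisite obstacle, whereas the paper's proof does not invoke it here---the pairing is already well-defined on all of $\widehat{\mathbb{T}}^{\bf b}$ by the discussion preceding Lemma~\ref{th2}---and the paper only establishes that finiteness afterward in Propositions~\ref{prop:t-poly-finite} and~\ref{prop:can-finite}.
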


\begin{proof}
  The proof is exactly as in \cite[Corollary 2.24]{Br}. Write $M_f=\sum_h \langle M_f,T_{-h}\rangle L_h$. Then the
  definition of $\langle\cdot,\cdot\rangle$ tells us that $\langle
  M_f,T_{-h}\rangle=t_{-f,-h}(q^{-1})$. The second equality is
  similar.
\end{proof}

\begin{rem}
An equivalent version of Corollary 3.6 has been obtained by Brundan, Losev and Webster via graded tensor product categorifications (see \cite[Section 5.9]{BLW} for details).
\end{rem}

In the remainder of this section we shall follow the definitions and conventions given in \cite{CLW2}.
For a  $0^m1^n$-sequence ${\bf b}$, we recall that $M_{\bf b}(\la)$,  $L_{\bf b}(\la)$ and $T_{\bf b}(\la)$ denote the Verma module, the highest weight irreducible module and the tilting module respectively for the integral weight $\la$ in the BGG category $\mc{O}^{m|n}_{\bf b}$ of modules over the general linear superalgebra $\gl(m|n)$ with respect to the Borel subalgebra $\mf{b}_{\bf b}$.
 A part of the Brundan-Kazhdan-Lusztig conjecture (BKL conjecture) \cite[Conjecture 8.1]{CLW2} states that
 \begin{align*}
(T_{\bf b}(\la):M_{\bf b}(\mu))=t_{f^{\bf b}_\mu f^{\bf b}_\la}(1),
\end{align*}
where $(T_{\bf b}(\la):M_{\bf b}(\mu))$ denotes the multiplicity of  the Verma module $M_{\bf b}(\mu)$ in the Verma flag of the tilting modules $T_{\bf b}(\la)$
and $f^{\bf b}_\gamma$  ( (see \cite[(6.9)]{CLW2} for the precise definition) is an element in $\Z^{m+n}$ determined by an integral weight $\gamma$.
By \cite[(6.9)]{CLW2} (cf.\cite[Theorem 6.4]{Br2}), the BKL conjecture implies
\begin{align}\label{tilt:verma:irred}
[M_{\bf b}(-\mu-2\rho_{\bf b}):L_{\bf b}(-\la-2\rho_{\bf b})]=\left(T_{\bf b}(\la):M_{\bf b}(\mu)\right)=t_{f^{\bf b}_\mu f^{\bf b}_\la}(1),
\end{align}
where $[M_{\bf b}(\gamma):L_{\bf b}(\eta)]$ denotes the multiplicity of  the irreducible module $L_{\bf b}(\eta)$ in the composition series  of the Verma module $M_{\bf b}(\gamma)$
and $\rho_{\bf b}$  ( (see \cite[(6.5)]{CLW2} for the precise definition) is an  integral weight depending on the $0^m1^n$-sequence ${\bf b}$.
Since every Verma module has a finite composition series,  every  integral weight tilting module has a finite Verma flag  by \cite[Proposition 3.9]{CLW2} and $t^{\bf b}_{gf}(q)\in\Z_+[q]$ (see Remark~\ref{positive}),  the validity of the BKL conjecture proved in \cite{CLW2, BLW} implies the following proposition.
\begin{prop}\label{prop:t-poly-finite}
Let ${\bf b}$ be a $0^m1^n$-sequence.
\begin{itemize}
  \item[(i)] For each $f\in\mathbb{Z}^{m+n}$, there are only finitely many $g\in\mathbb{Z}^{m+n}$ such that $t^{{\bf b}}_{gf}(q)\not=0$.
  \item[(ii)] For each $g\in\mathbb{Z}^{m+n}$, there are only finitely many $f\in\mathbb{Z}^{m+n}$ such that $t^{{\bf b}}_{gf}(q)\not=0$.
  \end{itemize}
 \end{prop}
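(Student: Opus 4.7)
The plan is to translate the question about polynomials $t^{\bf b}_{gf}(q)$ into a question about multiplicities on the representation-theoretic side via the Brundan--Kazhdan--Lusztig conjecture, which is already known to hold by \cite{CLW2, BLW}. The key observation that makes this translation effective is the positivity $t^{\bf b}_{gf}(q) \in \Z_+[q]$ from Remark~\ref{positive}: since all coefficients are non-negative integers, one has $t^{\bf b}_{gf}(q) \neq 0$ if and only if $t^{\bf b}_{gf}(1) \neq 0$. Thus it suffices to bound the set of $g$ (respectively $f$) for which the integer $t^{\bf b}_{gf}(1)$ is nonzero.

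First I would recall that the correspondence $\gamma \mapsto f^{\bf b}_\gamma$ gives a bijection between integral weights and $\Z^{m+n}$, so fixing $f \in \Z^{m+n}$ is the same as fixing an integral weight $\la$ with $f = f^{\bf b}_\la$, and similarly for $g = f^{\bf b}_\mu$. For part (i), I invoke the BKL identity
\begin{equation*}
t^{\bf b}_{gf}(1) = (T_{\bf b}(\la):M_{\bf b}(\mu)).
\end{equation*}
Fixing $f$ fixes $\la$, hence fixes the tilting module $T_{\bf b}(\la)$. The excerpt already notes that every integral weight tilting module admits a \emph{finite} Verma flag, so only finitely many $\mu$ can appear with nonzero multiplicity. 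By positivity, the same finitely many $g = f^{\bf b}_\mu$ are all the $g$ for which $t^{\bf b}_{gf}(q) \neq 0$.

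For part (ii), I would use the second identity from \eqref{tilt:verma:irred}, namely
\begin{equation*}
t^{\bf b}_{gf}(1) = [M_{\bf b}(-\mu-2\rho_{\bf b}):L_{\bf b}(-\la-2\rho_{\bf b})].
\end{equation*}
Now fixing $g$ fixes $\mu$, hence fixes the Verma module $M_{\bf b}(-\mu-2\rho_{\bf b})$. Since every Verma module has a finite composition series, only finitely many simples $L_{\bf b}(-\la-2\rho_{\bf b})$, and hence only finitely many $\la$ and therefore only finitely many $f = f^{\bf b}_\la$, can yield a nonzero multiplicity. Combined once more with positivity, this gives (ii).

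The main conceptual step is essentially packaged into the two inputs we rely on: the validity of the BKL conjecture and the positivity of $t^{\bf b}_{gf}(q)$. No genuine obstacle remains beyond ensuring that the correspondence $\la \leftrightarrow f^{\bf b}_\la$ is bijective on integral weights so that ``finitely many $\mu$'' and ``finitely many $g$'' really translate into one another; this is part of the setup in \cite{CLW2} and requires no additional argument here.
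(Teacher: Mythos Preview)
Your proposal is correct and follows essentially the same argument as the paper: reduce to the value at $q=1$ via positivity, then invoke the BKL identity \eqref{tilt:verma:irred} together with the finiteness of Verma flags of tilting modules for (i) and of composition series of Verma modules for (ii). The paper's justification is the paragraph immediately preceding the proposition, and your write-up simply makes explicit the matching of indices $f\leftrightarrow\la$, $g\leftrightarrow\mu$ that is implicit there.
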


 The following is an obvious consequence of the proposition above.

\begin{prop}\label{prop:can-finite}
Let ${\bf b}$ be a $0^m1^n$-sequence.
\begin{itemize}
  \item[(i)] For every $T_f^{{\bf b}}\in \widehat{\mathbb{T}}^{{\bf b}}$, we have $T_f^{{\bf b}}\in {\mathbb{T}}^{{\bf b}}$.
  \item[(ii)] For each $f\in\mathbb{Z}^{m+n}$, the expression of $M_f=\sum_{h\preceq_{\bf b}f}t_{-f,-h}(q^{-1})L_h$ in \eqnref{aux:eq12} is a finite sum.
Moreover, for each given $h\in\mathbb{Z}^{m+n}$ there are only finitely many $M_f$ in the  standard monomial basis such that $L_h$ appears in the  expression of $M_f$ in \eqnref{aux:eq12} with nonzero coefficients.
\end{itemize}
 \end{prop}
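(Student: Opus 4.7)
The plan is to treat Proposition~\ref{prop:can-finite} as a direct bookkeeping consequence of Proposition~\ref{prop:t-poly-finite} together with the explicit formulas for $T_f$ (from Proposition~\ref{prop:can}) and for $M_f$ in terms of $L_h$ (from Corollary~\ref{cor:inv}). No new ingredients should be needed; the real content is already buried in Proposition~\ref{prop:t-poly-finite}, which in turn packages the BKL conjecture together with the positivity $t^{\bf b}_{gf}(q)\in\Z_+[q]$ from Remark~\ref{positive}.

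For part (i), I would start from the defining expansion
\[
 T^{\bf b}_f = M^{\bf b}_f + \sum_{g\prec_{\bf b} f} t^{\bf b}_{gf}(q)\, M^{\bf b}_g
\]
in Proposition~\ref{prop:can}. By Proposition~\ref{prop:t-poly-finite}(i), for our fixed $f$ there are only finitely many $g\in\Z^{m+n}$ with $t^{\bf b}_{gf}(q)\ne 0$. Hence the sum is actually finite, so $T^{\bf b}_f$ lies in the uncompleted Fock space $\mathbb{T}^{\bf b}$.

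For part (ii), I would use Corollary~\ref{cor:inv}, which gives
\[
 M_f = \sum_{h\preceq_{\bf b} f} t_{-f,-h}(q^{-1})\, L_h.
\]
For fixed $f$, the nonzero terms correspond to $h$ with $t_{-f,-h}(q)\ne 0$; applying Proposition~\ref{prop:t-poly-finite}(ii) with the first index $g = -f$ fixed, there are only finitely many such $h$, which gives the finite-sum claim. For the \emph{moreover} statement, fix instead $h\in\Z^{m+n}$ and seek $f$ such that the coefficient $t_{-f,-h}(q^{-1})$ in the above expansion of $M_f$ is nonzero; this is Proposition~\ref{prop:t-poly-finite}(i) applied with the second index $-h$ fixed, which produces only finitely many $-f$, hence finitely many $f$.

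There is essentially no obstacle here: the hard work — reducing the finiteness of the Brundan-Kazhdan-Lusztig polynomial supports to the finiteness of composition series of Verma modules and Verma flags of tilting modules via \eqref{tilt:verma:irred}, and invoking the BKL conjecture proved in \cite{CLW2, BLW} — has already been completed in Proposition~\ref{prop:t-poly-finite}. The only thing one has to be careful about is matching indices: in Corollary~\ref{cor:inv} the polynomials appear in the form $t_{-f,-h}$, so part (ii) of the present proposition uses part (ii) of Proposition~\ref{prop:t-poly-finite}, while its \emph{moreover} clause uses part (i). After that pairing is made, the proof is one line in each case.
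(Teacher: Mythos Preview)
Your proposal is correct and matches the paper's approach exactly: the paper simply states that Proposition~\ref{prop:can-finite} is ``an obvious consequence of the proposition above'' (i.e., Proposition~\ref{prop:t-poly-finite}), and you have spelled out precisely the bookkeeping it has in mind, including the correct index-matching between the two parts of Proposition~\ref{prop:t-poly-finite} and the two claims in part~(ii).
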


 \begin{rem} The formula \eqnref{tilt:verma:irred} and finiteness of  Verma flag  of every integral weight tilting module imply that there are only finitely many Verma modules  containing a given irreducible integral weight module $L_{\bf b}(\la)$  in their composition series.
 \end{rem}

\section{Inversion formula in $\widehat{\mathscr{E}}^{\bf b}$}
In this section, we define a symmetric bilinear form on $\widehat{ \mathscr{E}}^{{\bf b}}$ and showing that the canonical
basis $\{T^{\bf b}_f|\,f\in\mathbb{Z}^{{\bf b},+}\}$ and the dual canonical basis
$\{L^{\bf b}_f|\,f\in\mathbb{Z}^{{\bf b},+}\}$ are dual with respect to the symmetric bilinear form.

Recall that the symmetric  linear form $(\cdot,\cdot)$ on
$\mathbb{T}^{{\bf b}}$ is determined by $(M_f,M_g)=\delta_{f,g}$. For $f,g\in\mathbb{Z}^{{\bf b},+}$, we have
\begin{equation*}
(K_f, K_g)
=-(\prod_{k=1}^d[r_k-r_{k-1}]!)(M_{f\cdot w_0}, M_{g\cdot
w_0}H_0)=-(-q)^{-\ell(w_0)}(\prod_{k=1}^d[r_k-r_{k-1}]!)\delta_{f,g}
\end{equation*}
by Lemma~\ref{lem:H0} (iii, iv) and Lemma~\ref{th3} (i).
Define a symmetric bilinear form $(\cdot,\cdot)_{\mathscr{E}}$ on
$\mathscr{E}^{\bf b}$ by
$$
(K_f,K_g)_{\mathscr{E}}:=\frac{-1}{(-q)^{-\ell(w_0)}(\prod_{k=1}^d[r_k-r_{k-1}]!)}(K_f,K_g).
$$
Then $\{K_f|f\in\mathbb{Z}^{{\bf b},+}\}$ is  an orthonormal basis
of $\mathscr{E}^{\bf b}$.

By Lemma~\ref{th3}(ii) and Lemma~\ref{lem:MfH0}, the antilinear
involution $\sigma$ on $\mathbb{T}^{{\bf b}}$
 defined in \eqref{def:sigma on T} leaves $\mathscr{E}^{{\bf b}}$
invariant and
\begin{equation*}
  \sigma(K_f)=(-q)^{\ell(w_0)}K_{-f\cdot w_0},  \quad \text{for all } f\in\mathbb{Z}^{{\bf b},+}.
\end{equation*}
We  define the symmetric bilinear form $\langle\cdot,\cdot\rangle_{\mathscr{E}}$
on $\widehat{\mathscr{E}}^{{\bf b}}$ by
\begin{equation*}
  \langle u,v\rangle_{\mathscr{E}}:=\frac{-1}{\prod_{k=1}^d[r_k-r_{k-1}]!}\langle u,v\rangle, \quad \text{for all } u,v\in\widehat{ \mathscr{E}}^{{\bf b}}.
\end{equation*}
It is easy to see that the bilinear form $(\cdot,\cdot)_{\mathscr{E}}$ on
$\mathscr{E}^{\bf b}$ can be extended to a bilinear map $(\cdot,\cdot)_{\mathscr{E}}$ on
$\widetilde{\mathbb{T}}^{\bf b}H_0\times \mathscr{E}^{\bf b}$ and
\begin{equation*}
 \langle u,v\rangle_{\mathscr{E}}=(-q)^{-\ell(w_0)}\sum_{g\in \mathbb{Z}^{{\bf b},+}}(u,K_g)_{\mathscr{E}}( \sigma(\overline{v}),K_{g})_{\mathscr{E}},
 \qquad \forall\, u,v\in\widehat{ \mathscr{E}}^{{\bf b}}.
\end{equation*}

The following theorem is a direct generalization of \cite[Theorem
3.13]{Br}. Since the proof is similar, we omit it.

\begin{thm}
  For $f,g\in \mathbb{Z}^{{\bf b},+}$, $\langle L_f,\mc{U}_{-g\cdot
  w_0}\rangle_{\mathscr{E}}=\delta_{f,g}$.
\end{thm}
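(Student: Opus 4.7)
The plan is to reduce the statement to Theorem 3.5, the inversion formula for the ambient space $\widehat{\mathbb T}^{\bf b}$, by transporting everything through the embedding $\widehat{\mathscr E}^{\bf b}=\widehat{\mathbb T}^{\bf b}H_0\hookrightarrow\widehat{\mathbb T}^{\bf b}$ and comparing the two bilinear forms.

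First, I would rewrite the second argument in the pairing. Each longest element $w_{I_k}$ is an involution, so $w_0\in\mathfrak S^{\bf b}$ is an involution as well. Thus, by the proposition identifying $\mc U_f$ with $T_{f\cdot w_0}H_0$, we obtain
\[
\mc U_{-g\cdot w_0}=T_{(-g\cdot w_0)\cdot w_0}H_0=T_{-g}H_0.
\]
Next, I would establish the key symmetry
\[
\langle uH_0,v\rangle=\langle u,vH_0\rangle,\qquad u,v\in\widehat{\mathbb T}^{\bf b}.
\]
Using the definition of $\langle\cdot,\cdot\rangle$ in \eqref{8}, Proposition~\ref{X-bar-H} and Lemma~\ref{th3}(ii) to move the bar and $\sigma$ across the $H_0$-action, together with Lemma~\ref{th3}(i) which says $(uH,w)=(u,w\tau(H))$ for the bilinear form $(\cdot,\cdot)$, each side unfolds into the same double sum once one invokes the crucial fact $\tau(H_0)=H_0$ from Lemma~\ref{lem:H0}(iv) (together with $\overline{H_0}=H_0$ from Lemma~\ref{lem:H0}(ii)).

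Third, since $L_f\in\widehat{\mathscr E}^{\bf b}=\widehat{\mathbb T}^{\bf b}H_0$, write $L_f=u H_0$; then by Lemma~\ref{lem:H0}(iii),
\[
L_fH_0=uH_0^2=-\Bigl(\prod_{k=1}^{d}[r_k-r_{k-1}]!\Bigr)uH_0=-\Bigl(\prod_{k=1}^{d}[r_k-r_{k-1}]!\Bigr)L_f.
\]
Combining these ingredients with the definition $\langle\cdot,\cdot\rangle_{\mathscr E}=-\tfrac{1}{\prod_k[r_k-r_{k-1}]!}\langle\cdot,\cdot\rangle$ and Theorem~\ref{th:symmetric}'s consequence (Theorem 3.5) gives
\[
\langle L_f,\mc U_{-g\cdot w_0}\rangle_{\mathscr E}
=\tfrac{-1}{\prod_k[r_k-r_{k-1}]!}\langle L_f H_0,T_{-g}\rangle
=\langle L_f,T_{-g}\rangle=\delta_{f,g}.
\]

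The one place that requires genuine care is the symmetry in the second paragraph; everything else is bookkeeping. The subtlety is that $\langle\cdot,\cdot\rangle$ is defined via the projections onto the standard monomial basis and an antilinear $\sigma$, so to slide $H_0$ across one has to commute $H_0$ past both $\sigma$ and the bar involution. This is exactly where $\tau(H_0)=\overline{H_0}=H_0$ (i.e., $H_0$ being fixed by both involutions of $\mc H^{\bf b}$) becomes indispensable, and is the reason the analogous argument in \cite[Theorem~3.13]{Br} goes through verbatim.
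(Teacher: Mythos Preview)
Your proposal is correct and follows essentially the same approach the paper indicates (it omits the proof and refers to \cite[Theorem~3.13]{Br}): rewrite $\mc U_{-g\cdot w_0}=T_{-g}H_0$, use the $H_0$-adjointness $\langle uH_0,v\rangle=\langle u,vH_0\rangle$ coming from $\overline{H_0}=\tau(H_0)=H_0$ together with Lemma~\ref{th3}, collapse $L_fH_0$ via Lemma~\ref{lem:H0}(iii), and invoke the ambient duality $\langle L_f,T_{-g}\rangle=\delta_{f,g}$. The only point worth a one-line justification is that the rearrangement of sums in your $H_0$-adjointness step is legitimate; this is immediate since $M_gH_0$ is a finite linear combination of monomials and the pairing $(\cdot,\cdot)$ is symmetric, so $(M_gH_0,M_h)=(M_hH_0,M_g)$.
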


\begin{cor}
 For $f,g\in \mathbb{Z}^{{\bf b},+}$,
\begin{equation*}
\sum_{h\in\mathbb{Z}^{{\bf b},+}}u_{-h\cdot w_0,-f\cdot
w_0}(q)\ell_{h,g}(q^{-1})=\delta_{f,g},\ \ \ \
\sum_{h\in\mathbb{Z}^{{\bf b},+}}\ell_{-h\cdot w_0,f}(q)u_{h,-g\cdot
w_0}(q^{-1})=\delta_{f,g}.
\end{equation*}
\end{cor}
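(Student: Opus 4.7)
The plan is to derive both inversion formulas directly from the preceding theorem $\langle L_f,\mathcal{U}_{-g\cdot w_0}\rangle_{\mathscr{E}}=\delta_{f,g}$ by expanding the canonical and dual canonical bases in the orthonormal basis $\{K_h\,|\,h\in\mathbb{Z}^{{\bf b},+}\}$ of $\mathscr{E}^{\bf b}$ and then reading off the matrix coefficients.

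First I would write
\[
L_f=\sum_{h\in\mathbb{Z}^{{\bf b},+}}\ell_{h,f}(q)\,K_h,\qquad \mathcal{U}_{-g\cdot w_0}=\sum_{h\in\mathbb{Z}^{{\bf b},+}}u_{h,-g\cdot w_0}(q)\,K_h.
\]
Using the bar-invariance of $\mathcal{U}_{-g\cdot w_0}$ together with the identity $\sigma(K_h)=(-q)^{\ell(w_0)}K_{-h\cdot w_0}$ established earlier in this section, a short computation gives
\[
\sigma\bigl(\overline{\mathcal{U}_{-g\cdot w_0}}\bigr)=(-q)^{\ell(w_0)}\sum_{h\in\mathbb{Z}^{{\bf b},+}}u_{h,-g\cdot w_0}(q^{-1})\,K_{-h\cdot w_0}.
\]
Plugging these expansions into the explicit formula for $\langle\cdot,\cdot\rangle_{\mathscr{E}}$, applying orthonormality $(K_h,K_{h'})_{\mathscr{E}}=\delta_{h,h'}$, and re-indexing the resulting sum via $k=-h\cdot w_0$ (using $w_0^{-1}=w_0$), the two factors $(-q)^{\pm\ell(w_0)}$ cancel and I obtain
\[
\sum_{h\in\mathbb{Z}^{{\bf b},+}}\ell_{h,f}(q)\,u_{-h\cdot w_0,-g\cdot w_0}(q^{-1})=\delta_{f,g}
\]
from the theorem. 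Substituting $q\mapsto q^{-1}$ and swapping the roles of $f$ and $g$ then yields the first identity of the corollary.

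The second identity is obtained symmetrically: since $\langle\cdot,\cdot\rangle$ is symmetric by \propref{th:symmetric} and $\langle\cdot,\cdot\rangle_{\mathscr{E}}$ differs from it only by a scalar, we also have $\langle\mathcal{U}_{-g\cdot w_0},L_f\rangle_{\mathscr{E}}=\delta_{f,g}$. Repeating the same expansion with the roles of $L_f$ and $\mathcal{U}_{-g\cdot w_0}$ interchanged produces
\[
\sum_{h\in\mathbb{Z}^{{\bf b},+}}u_{h,-g\cdot w_0}(q)\,\ell_{-h\cdot w_0,f}(q^{-1})=\delta_{f,g},
\]
from which the second identity follows after replacing $q$ by $q^{-1}$. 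The only delicate point is the bookkeeping of the power of $(-q)^{\ell(w_0)}$ and of the reindexing through the involution $w_0$; neither is a genuine obstacle, so this gives the full argument.
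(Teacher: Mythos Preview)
Your proposal is correct and follows exactly the intended route: the paper states this corollary without proof because it is the routine unwinding of the duality $\langle L_f,\mathcal{U}_{-g\cdot w_0}\rangle_{\mathscr{E}}=\delta_{f,g}$ through the orthonormal basis $\{K_h\}$, just as you describe (and in direct analogy with Corollary~\ref{cor:el}). Your bookkeeping of the antilinearity of $\sigma$, the factor $(-q)^{\pm\ell(w_0)}$, and the reindexing via the involution $h\mapsto -h\cdot w_0$ is accurate, and the final substitution $q\mapsto q^{-1}$ is legitimate since each sum has only finitely many nonzero terms by the finite interval property of $\preceq_{\bf b}$.
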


\begin{cor}
   For $f\in \mathbb{Z}^{{\bf b},+}$,
   \begin{equation*}
     K_f=\sum_{h\in \mathbb{Z}^{{\bf b},+}}u_{-f\cdot w_0,-h\cdot
     w_0}(q^{-1})L_h=\sum_{h\in \mathbb{Z}^{{\bf b},+}}\ell_{-f\cdot w_0,-h\cdot
     w_0}(q^{-1})\mc{U}_{h}.
   \end{equation*}
\end{cor}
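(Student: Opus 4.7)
The plan is to mirror the proof of Corollary 3.7 in the parabolic setting of $\widehat{\mathscr{E}}^{\bf b}$. Since $\{L_h \mid h \in \mathbb{Z}^{{\bf b}, +}\}$ is a topological basis of $\widehat{\mathscr{E}}^{\bf b}$, expand $K_f = \sum_h c_h L_h$; the biorthogonality from the preceding theorem, $\langle L_h, \mc{U}_{-g \cdot w_0}\rangle_{\mathscr{E}} = \delta_{h, g}$, then forces
\[
c_h = \langle K_f, \mc{U}_{-h \cdot w_0}\rangle_{\mathscr{E}}.
\]
For the $\mc{U}$-expansion $K_f = \sum_h d_h \mc{U}_h$, I use the symmetry of $\langle\cdot,\cdot\rangle_{\mathscr{E}}$ (inherited from that of $\langle\cdot,\cdot\rangle$ via Proposition~\ref{th:symmetric}) and the substitution $h\mapsto -h\cdot w_0$, which is an involution on $\mathbb{Z}^{{\bf b},+}$, to conclude
\[
d_h = \langle K_f, L_{-h\cdot w_0}\rangle_{\mathscr{E}}.
\]

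Next I compute each pairing explicitly. Using the defining formula for $\langle\cdot,\cdot\rangle_{\mathscr{E}}$ in the excerpt, together with the orthonormality $(K_f, K_g)_{\mathscr{E}} = \delta_{f,g}$ and the bar-invariance of $\mc{U}_{-h\cdot w_0}$, the pairing collapses to
\[
\langle K_f, \mc{U}_{-h\cdot w_0}\rangle_{\mathscr{E}} = (-q)^{-\ell(w_0)}\bigl(\sigma(\mc{U}_{-h\cdot w_0}), K_f\bigr)_{\mathscr{E}}.
\]
I then expand $\mc{U}_{-h\cdot w_0}$ in the $K$-basis, apply the already-established formula $\sigma(K_g) = (-q)^{\ell(w_0)} K_{-g\cdot w_0}$ together with the antilinearity identity $\overline{u_{g,-h\cdot w_0}(q)} = u_{g,-h\cdot w_0}(q^{-1})$, and collect the unique surviving term, which comes from the index $g = -f\cdot w_0$. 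Absorbing the leading $\delta_{f,h}$ contribution via the convention $u_{h,h}(q^{-1})=1$, I obtain $\langle K_f, \mc{U}_{-h\cdot w_0}\rangle_{\mathscr{E}} = u_{-f\cdot w_0,-h\cdot w_0}(q^{-1})$, which yields the first equality. The computation of $\langle K_f, L_{-h\cdot w_0}\rangle_{\mathscr{E}}$ is formally identical, with $u$ replaced by $\ell$, yielding the second equality.

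The technical core is already supplied by the preceding theorem, the orthonormality of $\{K_f\}$, and the explicit formula for $\sigma$ on $K_g$; no new analytical input is needed. The only bookkeeping that requires care is verifying that the substitution $h\mapsto -h\cdot w_0$ preserves $\mathbb{Z}^{{\bf b},+}$, so that all the symbols $\mc{U}_{-h\cdot w_0}$, $L_{-h\cdot w_0}$, and the referenced polynomials $u_{-f\cdot w_0,-h\cdot w_0}$ and $\ell_{-f\cdot w_0,-h\cdot w_0}$ are defined. This holds because reversal within each block $I_k$ combined with negation preserves $\bf b$-dominance: the sign $(-1)^{b_k}$ enters symmetrically in the definition of $\mathbb{Z}^{{\bf b},+}$ and $w_0$ reverses the order within each $I_k$, so the required strict inequalities are restored after negation.
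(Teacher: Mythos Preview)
Your proposal is correct and follows essentially the same approach as the paper. The paper does not write out a proof of this corollary explicitly, but it is the direct $\widehat{\mathscr{E}}^{\bf b}$-analogue of Corollary~\ref{cor:inv}, whose proof is precisely the one you give: expand $K_f$ in the dual canonical (respectively canonical) basis, identify the coefficients as pairings $\langle K_f,\mc U_{-h\cdot w_0}\rangle_{\mathscr{E}}$ (respectively $\langle K_f,L_{-h\cdot w_0}\rangle_{\mathscr{E}}$) via the biorthogonality of Theorem~4.1 and the symmetry of $\langle\cdot,\cdot\rangle_{\mathscr{E}}$, and then evaluate these pairings from the definition using $\sigma(K_g)=(-q)^{\ell(w_0)}K_{-g\cdot w_0}$ and the orthonormality of $\{K_g\}$.
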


\begin{rem}
For a $0^m1^n$-sequence ${\bf b}$, we recall that the tensor space $\mathbb T^{{\bf b},I_k}$ is defined in \eqnref{T:I} for $k\in\lbr d\rbr$.  The results in this section have a counterpart for $q$-wedge subspaces of $\mathbb T^{\bf b}= \mathbb T^{{\bf b},I_1}\otimes
\mathbb T^{{\bf b},I_2}\otimes\cdots\otimes \mathbb T^{{\bf
b},I_d}$ of the following forms:
$$
\wedge^{n_{11}}\VV^{b_1}\otimes\cdots\otimes\wedge^{n_{1{p_1}}}\VV^{b_1}
\otimes\wedge^{n_{21}}\VV^{b_2}\otimes\cdots\otimes\wedge^{n_{2{p_2}}}\VV^{b_2}\otimes\cdots\otimes
\wedge^{n_{d1}}\VV^{b_d}\otimes\cdots\otimes\wedge^{n_{d{p_d}}}\VV^{b_d},
$$
where $\sum_{j=1}^{p_k}n_{kj}=r_k-r_{k-1}$ for $k\in\lbr d\rbr$. We leave the formulations and the proof of analogous results to the reader.
\end{rem}

\section{An algorithm}
In this section we explain a method to compute the elements in
the canonical basis of $\widehat{\mathbb{T}}^{{\bf b}}$ from  knowing the precise
expressions of the elements in the canonical basis of
$\widehat{\mathbb{T}}^{{\bf b}_\text{st}}$ in terms of standard
monomial basis.  Recall that the $0^m1^n$-sequence ${\bf b}_{\mathrm{st}}:=(1,\cdots,1,0,\cdots,0)$ consisting of $n$ ${1}$'s
followed by $m$ ${0}$'s. Combining with the Brundan's algorithm for
computing  the elements in the canonical basis of
$\widehat{\mathbb{T}}^{{\bf b}_{\mathrm{st}}}$, we have an algorithm
computing the elements in the canonical basis of
$\widehat{\mathbb{T}}^{{\bf b}}$ for arbitrary $0^m1^n$-sequence ${\bf b}$.

Recall that the $ \UU_k$-modules $\mathbb{V}_{k}$, $\mathbb{W}_{k}$
and  $\mathbb{T}^{\bf b}_{\leq|k|}$ are defined in
Section~\ref{subsection:Fock space}.
 For $r, k\in \mathbb{N}$, we recall that $\Z^{r}_k$ denotes the subset of $\Z^{r}$ consisting functions with values between $-k$ and $k$, and $\{M^{\bf b}_f \,|\, f\in\Z^{m+n}_k\}$ forms a basis of $\mathbb{T}^{{\bf b}}_{\leq|k|}$. The antilinear involution $\psi^{(k)}=\pi_k\circ\psi$ on $\mathbb{T}^{{\bf b}}_{\leq|k|}$ defined in Section~\ref{subsection:canonical} satisfies
    $$\psi^{(k)}(XM)=\overline{X}\psi^{(k)}(M),\qquad\text{for all}\,\, M\in\mathbb{T}^{{\bf b}}_{\leq|k|}\, \text{and}\, X\in \UU_k.$$
For each element $T^{{\bf b}}_f$ in the canonical basis of
$\widehat{\mathbb{T}}^{{\bf b}}$ with $f\in\Z^{m+n}_k$, we define
$$T^{{\bf b},k}_{f}:=\pi_k( T^{{\bf b}}_f)\in \mathbb{T}^{{\bf b}}_{\leq|k|}.$$
By \cite[Lemma 3.4]{CLW2}, we have
\begin{equation*}
\psi^{(k)}(T^{{\bf b},k}_{f})=T^{{\bf b},k}_{f},\qquad\text{for
all}\,\, f\in\Z^{m+n}_k,
\end{equation*}
and $\{T^{{\bf b},k}_f|f\in \Z^{m+n}_k\}$ forms a basis of $\mathbb{T}^{\bf b}_{\leq|k|}$.

 For
$f\in \Z^{1+1}$ with $f(1)=f(2)$, we define $f^{\downarrow}$ and $f^{\uparrow}$ in $\Z^{1+1}$ by
\begin{align}\label{eq:updown}
\begin{split}
f^{\downarrow}(1)=f^{\downarrow}(2)=f(1)-1,\\
f^{\uparrow}(1)=f^{\uparrow}(2)=f(1)+1.
\end{split}
\end{align}

The space $\widehat{\mathbb{T}}^{(0,1)}$ has the canonical basis
$\{T^{(0,1)}_f\,|\, f\in\Z^{1+1}\}$.   In this case, we have the
following formula \cite[Lemma~5.1]{CLW2}( cf.
\cite[Example~2.19]{Br}) for the elements in the the canonical
basis:
\begin{align*}
T^{(0,1)}_f=\begin{cases}
 M^{(0,1)}_f+q M^{(0,1)}_{f^\downarrow}&\text{ if }f(1)=f(2),
  \\
 M^{(0,1)}_f&\text{ if }f(1)\not=f(2).
\end{cases}
\end{align*}
Similarly, $\widehat{\mathbb{T}}^{(1,0)}$ has the canonical basis
$\{T^{(1,0)}_f\,|\, f\in\Z^{1+1}\}$ such that
\begin{align*}
T^{(1,0)}_f=\begin{cases}
 M^{(1,0)}_f+q M^{(1,0)}_{f^\uparrow}&\text{ if }f(1)=f(2),
  \\
 M^{(1,0)}_f&\text{ if }f(1)\not=f(2).
\end{cases}
\end{align*}
Now we consider the truncated Fock spaces $\mathbb{T}^{(0,
1)}_{\leq|k|}=\mathbb{V}_{k}\otimes \mathbb{W}_{k}$ and
$\mathbb{T}^{(1,0)}_{\leq|k|}=\mathbb{W}_{k}\otimes \mathbb{V}_{k}$.
Let $M_f:=M^{(0,1)}_f=v_{f(1)}\otimes w_{f(2)}$
and  $M'_f:=M^{(1,0)}_f=w_{f(1)}\otimes v_{f(2)}$ for each $f\in\Z^{1+1}$. Define
\begin{align*}
    T^k_{f}&:=T^{(0,1)}_{f},
      \quad \mbox{if}\ (f(1),f(2))\not=(-k,-k),\\
  T'^k_f&:=T^{(1,0)}_{f},    \quad  \mbox{if}\ (f(1),f(2))\not=(k,k).
\end{align*}

  Then $\{M_f\,|\, f\in\Z^{1+1}_k\}$ and $\{T^k_{f}\,|\, f\in\Z^{1+1}_k,   (f(1),f(2))\not=(-k,-k)\}\cup\{M_{(-k,-k)}\}$ are bases for $\mathbb{V}_{k}\otimes \mathbb{W}_{k}$, and  $\{M'_f\,|\, f\in\Z^{1+1}_k\}$ and $\{T'^k_{f}\,|\, f\in\Z^{1+1}_k,  (f(1),f(2))\not=(k,k)\}\cup\{M'_{(k,k)}\}$ are bases for $\mathbb{W}_{k}\otimes \mathbb{V}_{k}$.

Let $ \mathbb{U}_k$ and $\mathbb{U}'_k$ be subspaces of
$\UU_k$-modules $\mathbb{T}^{(0, 1)}_{\leq|k|}$ and $\mathbb{T}^{(1,
0)}_{\leq|k|}$ spanned by $\{T^k_{f}\,|\,f\in\mathbb{Z}^{1+1}_k,\,
(f(1),f(2))\neq(-k,-k)\}$ and
$\{T'^k_{f}\,|\,f\in\mathbb{Z}^{1+1}_k,\, (f(1),f(2))\neq(k,k)\}$,
respectively. It is easy to check $\mathbb{U}_k$ and $\mathbb{U}'_k$
are $\UU_k$-submodules of $\mathbb{T}^{(0, 1)}_{\leq|k|}$ and
$\mathbb{T}^{(1, 0)}_{\leq|k|}$, respectively.

 Summarizing the above, we have the following.

\begin{lem}\label{canonical k}
$\mathbb{U}_k$ and $\mathbb{U}'_k$ are $\UU_k$-submodules of
$\mathbb{T}^{(0, 1)}_{\leq|k|}$ and $\mathbb{T}^{(1, 0)}_{\leq|k|}$,
respectively. $\{T^k_{f}\,|\,f\in\mathbb{Z}^{1+1}_k,\,
(f(1),f(2))\neq(-k,-k)\}$ and
$\{T'^k_{f}\,|\,f\in\mathbb{Z}_k^{1+1},\, (f(1),f(2))\neq(k,k)\}$
are  bases consisting of $\psi^{(k)}$-invariant  elements of
$\mathbb{U}_k$ and $\mathbb{U}'_k$, respectively.
\end{lem}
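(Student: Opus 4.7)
The two claims about $\psi^{(k)}$-invariance and the basis property are essentially recorded just before the lemma. The identity $\psi^{(k)}(T^k_f)=T^k_f$ for all $f\in\Z^{1+1}_k$ is already stated. Linear independence of $\{T^k_f : f\neq(-k,-k)\}$ in $\mathbb{U}_k$ follows from the explicit formulas $T^k_f=M_f$ (when $f(1)\neq f(2)$) and $T^k_{(a,a)}=M_{(a,a)}+qM_{(a-1,a-1)}$ (when $-k<a\leq k$): together with $M_{(-k,-k)}$ these are related to $\{M_f : f\in\Z^{1+1}_k\}$ by a triangular transformation. Hence $\mathbb{U}_k$ has dimension $(2k+1)^2-1$, and the situation for $\mathbb{U}'_k$ is parallel.

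The substantive content is that $\mathbb{U}_k$ is a $\UU_k$-submodule of $\mathbb{T}^{(0,1)}_{\leq|k|}$. I would realize $\mathbb{U}_k$ as the kernel of an explicit $\UU_k$-module homomorphism
\begin{equation*}
\phi:\mathbb{T}^{(0,1)}_{\leq|k|}\longrightarrow\Q(q),\qquad \phi(M_{(a,a)}):=(-q)^{a+k},\qquad \phi(M_f):=0 \text{ if } f(1)\neq f(2),
\end{equation*}
with $\Q(q)$ regarded as the trivial $\UU_k$-module. The verification $\phi(Xu)=\epsilon(X)\phi(u)$ on generators $X=E_a,F_a,K_a^{\pm 1}$ is a finite direct calculation using $\Delta$ and the explicit actions on $\mathbb{V}_k\otimes\mathbb{W}_k$: the $K_a^{\pm 1}$ case is immediate since $K_a^{\pm 1}$ acts as a scalar on each $M_f$; for $X=E_a$, applied to any diagonal $M_{(b,b)}$ the output is a purely off-diagonal basis vector (and hence killed by $\phi$), while the only off-diagonal input producing a diagonal output is $u=M_{(a+1,a)}$, giving $E_a M_{(a+1,a)}=M_{(a+1,a+1)}+qM_{(a,a)}$ with image under $\phi$ equal to $(-q)^{a+1+k}+q(-q)^{a+k}=0$; the $F_a$-case is symmetric.

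The same telescoping identity $(-q)^{a+k}+q(-q)^{(a-1)+k}=0$ shows $\phi(T^k_f)=0$ for every $f\neq(-k,-k)$, so $\mathbb{U}_k\subseteq\ker\phi$. Since $\phi(M_{(-k,-k)})=1$, $\phi$ is surjective, whence $\dim\ker\phi=(2k+1)^2-1=\dim\mathbb{U}_k$ and $\mathbb{U}_k=\ker\phi$ is a $\UU_k$-submodule. The assertion for $\mathbb{U}'_k$ is handled in parallel via $\phi':\mathbb{T}^{(1,0)}_{\leq|k|}\to\Q(q)$ defined by $\phi'(M'_{(a,a)}):=(-q)^{k-a}$ and $\phi'(M'_f):=0$ for $f(1)\neq f(2)$. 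Conceptually, $\phi$ is the projection onto the trivial $\UU_k$-direct summand of $\mathbb{V}_k\otimes\mathbb{W}_k$ spanned by $\sum_{a=-k}^{k}(-q)^{a+k}M_{(a,a)}$, and $\mathbb{U}_k$ is the complementary ``adjoint'' isotypic component; the only real work is checking that $\phi$ is a $\UU_k$-map, which reduces to the handful of cases above.
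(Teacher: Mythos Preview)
Your argument is correct. The paper itself offers no proof of this lemma: the submodule claim is dispatched in the preceding paragraph with ``It is easy to check,'' and the lemma is then stated as a summary. So there is nothing in the paper to compare against beyond the implicit suggestion that one should verify closure under $E_a,F_a,K_a$ directly on the spanning set $\{T^k_f\}$.

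Your route is different and cleaner: rather than checking closure element by element, you identify $\mathbb{U}_k$ as the kernel of the quantum evaluation map $\phi:\mathbb V_k\otimes\mathbb W_k\to\Q(q)$ (the projection onto the trivial summand of $V\otimes V^*$), and reduce the submodule statement to the single verification that $\phi$ is a $\UU_k$-homomorphism. The key computations---$E_a M_{(a+1,a)}=M_{(a+1,a+1)}+qM_{(a,a)}$ and the telescoping $(-q)^{a+1+k}+q(-q)^{a+k}=0$---are correct, and the dimension count $(2k+1)^2-1$ pins down $\mathbb U_k=\ker\phi$ exactly. The parallel map $\phi'$ for $\mathbb U'_k$ works the same way. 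Compared with a direct closure check, your approach has the advantage of explaining \emph{why} the codimension-one subspace is a submodule (it is the complement of the trivial summand), at the cost of one extra definition; the paper's implicit approach would be slightly shorter but purely computational.
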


\begin{prop}\label{lem:Rk}
There is a $\UU_k$-module isomorphism
$\mathcal R^{(k)}:\mathbb{U}'_k\longrightarrow\mathbb{U}_k$
determined by
\begin{equation*}
{\mathcal{R}}^{(k)}(T'^k_{f})=
    \begin{cases}
   T^k_{f^\uparrow} & \mbox{if}\ f(1)=f(2)\neq k,\\
  T^k_{f\cdot\tau}& \mbox{if}\ f(1)\neq f(2),
  \end{cases}
\end{equation*}
where $f\cdot\tau\in \Z^{1+1}$ is defined by $f\cdot\tau(1)=f(2)$
and $f\cdot\tau(2)=f(1)$. Moreover, ${\mathcal{R}}^{(k)}$
commutes with $\psi^{(k)}$.
\end{prop}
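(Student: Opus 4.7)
The plan is to establish three things: $\mathcal R^{(k)}$ is a $\mathbb Q(q)$-linear isomorphism, it intertwines the $\UU_k$-action, and it commutes with $\psi^{(k)}$. For the first, Lemma~\ref{canonical k} identifies the sets $\{T'^k_{f}\mid f\in\mathbb Z^{1+1}_k,\,(f(1),f(2))\neq(k,k)\}$ and $\{T^k_{g}\mid g\in\mathbb Z^{1+1}_k,\,(g(1),g(2))\neq(-k,-k)\}$ as bases of $\mathbb U'_k$ and $\mathbb U_k$ respectively. The assignment $f\mapsto f^\uparrow$ (when $f(1)=f(2)\neq k$) and $f\mapsto f\cdot\tau$ (when $f(1)\neq f(2)$) is a bijection between the two indexing sets, with inverse $g\mapsto g^\downarrow$ (when $g(1)=g(2)\neq -k$) and $g\mapsto g\cdot\tau$ (when $g(1)\neq g(2)$). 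Hence $\mathcal R^{(k)}$ extends uniquely to a $\mathbb Q(q)$-linear isomorphism.

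The $\UU_k$-equivariance is the crux, and I would verify it directly on the generators $K_b^{\pm 1}$ ($-k\leq b\leq k$), $E_a$, $F_a$ ($-k\leq a\leq k-1$). A short weight computation using the explicit formulas for $T^{(0,1)}_f$ and $T^{(1,0)}_f$ shows that $T'^k_{f}$ and $\mathcal R^{(k)}(T'^k_{f})$ carry the same weight, which handles the $K_b$'s at once. For $E_a$ and $F_a$, I would expand both $\mathcal R^{(k)}(X\cdot T'^k_{f})$ and $X\cdot \mathcal R^{(k)}(T'^k_{f})$ in the standard bases $\{M'_h\}$ and $\{M_h\}$ using the coproducts $\Delta(E_a)=1\otimes E_a+E_a\otimes K_{a+1,a}$ and $\Delta(F_a)=F_a\otimes 1+K_{a,a+1}\otimes F_a$ together with the explicit $\UU_k$-action on $\mathbb V_k$ and $\mathbb W_k$. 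The check splits into cases according to whether $f(1)=f(2)$ and to the adjacency of $a,a+1$ to $f(1),f(2)$; in each configuration a routine calculation shows that the two standard-basis expansions coincide.

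The commutation with $\psi^{(k)}$ is then formal: each $T'^k_{f}$ is $\psi^{(k)}$-invariant, and each image $\mathcal R^{(k)}(T'^k_{f})$, being a basis element of $\mathbb U_k$, is also $\psi^{(k)}$-invariant by Lemma~\ref{canonical k}. Writing $u=\sum_f c_f T'^k_{f}\in\mathbb U'_k$, the antilinearity of $\psi^{(k)}$ and the $\mathbb Q(q)$-linearity of $\mathcal R^{(k)}$ give
\begin{equation*}
\mathcal R^{(k)}\bigl(\psi^{(k)}(u)\bigr)=\sum_f \overline{c_f}\,\mathcal R^{(k)}(T'^k_{f})=\psi^{(k)}\bigl(\mathcal R^{(k)}(u)\bigr).
\end{equation*}
The main obstacle is the case analysis in the $E_a$, $F_a$ step: one must carefully track how the correction terms $qM'_{f^\uparrow}$ in $T'^k_{f}$ and $qM_{g^\downarrow}$ in $T^k_{g}$ reorganize under the action, especially near the boundary values $\pm k$ where these terms would otherwise lie outside the truncated spaces. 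A conceptual shortcut would be to observe that both $\mathbb U'_k$ and $\mathbb U_k$ should turn out to be irreducible $\UU_k$-modules of highest weight $\varepsilon_k-\varepsilon_{-k}$, realized respectively by $T'^k_{(-k,k)}$ and $T^k_{(k,-k)}$, which are matched by $\mathcal R^{(k)}$; by Schur's lemma any nonzero $\UU_k$-equivariant map between them is automatically an isomorphism, but this still leaves the equivariance check itself.
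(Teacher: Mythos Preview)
Your approach is correct but takes a different route from the paper. The paper constructs $\mathcal R^{(k)}$ as the restriction to $\mathbb U'_k$ of the R-matrix isomorphism $\Theta^{(k)}\circ\tilde f\circ P:\mathbb W_k\otimes\mathbb V_k\to\mathbb V_k\otimes\mathbb W_k$, where $P$ is the flip and $\tilde f(v_i\otimes w_j)=q^{-\delta_{i,j}}v_i\otimes w_j$; $\UU_k$-equivariance then comes for free from \cite[Theorem~7.3]{Jan}, and the only computation is to check that this known isomorphism sends $T'^k_f$ to $T^k_{f^\uparrow}$ (resp.\ $T^k_{f\cdot\tau}$), which is a two-line calculation using $\psi^{(k)}$-invariance. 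Your plan instead defines the map on bases and verifies equivariance for $E_a,F_a,K_b$ by hand, which is sound (the sample case $f(1)=f(2)$, $a=f(1)$ does work out) but involves a genuine case analysis that the paper avoids entirely. The two arguments agree verbatim on the $\psi^{(k)}$-commutation. Your Schur's-lemma remark is accurate but, as you note, does not shortcut the equivariance: knowing both modules are irreducible of the same highest weight tells you \emph{some} isomorphism exists, not that your particular basis-matching map is one. The paper's approach buys brevity and conceptual clarity; yours is more elementary in that it avoids invoking the quasi-$\mathcal R$-matrix machinery, at the cost of the bookkeeping you flag near the boundary values.
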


\begin{proof}
Let $P: \mathbb{W}_{k}\otimes \mathbb{V}_{k} \longrightarrow
\mathbb{V}_{k}\otimes \mathbb{W}_{k}$ be the linear map defined by
$P(w\otimes v)=v\otimes w$ for $w\in\mathbb{W}_{k}$ and $v\in
\mathbb{V}_{k}$ and let $\tilde{f}: \mathbb{V}_{k}\otimes
\mathbb{W}_{k} \longrightarrow \mathbb{V}_{k}\otimes \mathbb{W}_{k}$
be the linear map defined by $\tilde{f}(v_i\otimes
w_j)=q^{-\delta_{i,j}}v_i\otimes w_j$ for $-k\leq i,j\leq k$. By
\cite[Theorem 7.3]{Jan}, $\Theta^{(k)}\circ \tilde{f}\circ P:
\mathbb{W}_{k}\otimes \mathbb{V}_{k} \longrightarrow
\mathbb{V}_{k}\otimes \mathbb{W}_{k}$ is a $\UU_k$-module
isomorphism. Note that $E_a$, $F_a$ and $q$ are replaced by $F_a$,
$E_a$ and $q^{-1}$ in {\em loc.~cit.}, respectively. For $f\in
\mathbb{Z}_k^{1+1}$ with $f(1)=f(2)\neq k$, we have
$$
\Theta^{(k)}\circ\tilde{f}\circ P(  T'^k_{f})=\Theta^{(k)}(q^{-1}
M_f+M_{f^{\uparrow}})=\psi^{(k)}(q
M_f+M_{f^{\uparrow}})=T^k_{f^{\uparrow}}.
$$
Similarly, we have $\Theta^{(k)}\circ\tilde{f}\circ P(T'^k_{f})=
T^k_{f\cdot\tau}$ for $f\in \mathbb{Z}_k^{1+1}$ with $f(1)\neq
f(2)$. Therefore the restriction  of the $\UU_k$-module isomorphism
$\Theta^{(k)}\circ\tilde{f}\circ P$  to $ \mathbb{U}'_k$ gives the
$\UU_k$-module  isomorphism ${\mathcal{R}}^{(k)}$ satisfying the
required property. ${\mathcal{R}}^{(k)}$ commuting with
$\psi^{(k)}$ follows from Lemma~\ref{canonical k} that $T^k_{f}$'s
and $T'^k_{f}$'s are $\psi^{(k)}$-invariant  elements.
\end{proof}

For a $0^m1^n$-sequence ${\bf b}=(b_1,b_2, \cdots,b_{m+n})$ with $b_\kappa=0$ and
$b_{\kappa+1}=1$, the $0^m1^n$-sequence $s_\kappa{\bf b}$ is defined
by
$$
s_\kappa{\bf b}:=(b_1,b_2, \cdots,b_{\kappa-1},1,0, b_{\kappa+2},\cdots b_{m+n}).
$$
That is  the $0^m1^n$-sequence $s_\kappa{\bf b}$ is obtained from
${\bf b}$ by switching the $\kappa$-coordinate and
$(\kappa+1)$-coordinate of ${\bf b}$.

For the $0^m1^n$ sequence ${\bf b}=({\bf b}^1,0,1,{\bf b}^2)$ wtih
$0^{m_1}1^{n_1}$-sequence ${\bf b}^1$ and $0^{m_2}1^{n_2}$-sequence
${\bf b}^2$ satisfying $m=m_1+m_2+1$ and $n=n_1+n_2+1$,   we define
subspaces
\begin{equation*}
  \mathbb{U}^{{\bf b},\kappa}_{k}:=\mathbb{T}^{{\bf b}^1}_{\leq |k|}\otimes\mathbb{U}_k\otimes
  \mathbb{T}^{{\bf b}^2}_{\leq |k|}\subseteq\mathbb{T}^{{\bf b}}_{\leq |k|}
  \quad \ \mbox{and}\quad
  \mathbb{U}'^{{\bf b},\kappa}_{k}:=\mathbb{T}^{{\bf b}^1}_{\leq |k|}\otimes\mathbb{U}'_k\otimes
  \mathbb{T}^{{\bf b}^2}_{\leq |k|}\subseteq\mathbb{T}^{s_\kappa{\bf b}}_{\leq |k|},
\end{equation*}
where $\kappa:=m_1+n_1+1$. Since $\mathbb{U}_k$ and $\mathbb{U}'_k$
are $\psi^{(k)}$-invariant $\UU_k$-modules, $\mathbb{U}^{{\bf
b},\kappa}_{k}$ and $\mathbb{U}'^{{\bf b},\kappa}_{k}$
 are $\psi^{(k)}$-invariant $\UU_k$-submodules of $\mathbb{T}^{{\bf b}}_{\leq |k|}$ and
 $\mathbb{T}^{s_\kappa{\bf b}}_{\leq |k|}$, respectively. For a $0^m1^n$-sequence  ${\bf b}$ with $b_\kappa=0$ and $b_{\kappa+1}=1$ and $f\in\Z^{m+n}$, we define\begin{align}\label{def:U:ka}
  U^{{\bf b},\kappa}_{f}&:=\texttt{v}^{b_1}_{f(1)}\otimes\cdots\otimes
  \texttt{v}^{b_{\kappa-1}}_{f(\kappa-1)}\otimes
  T_{(f(\kappa),f(\kappa+1))}\otimes
  \texttt{v}_{f(\kappa+2)}^{b_{\kappa+2}}\otimes\cdots\otimes
  \texttt{v}_{f(m+n)}^{b_{m+n}},\\\label{def:U':ka}
U'^{{\bf b},\kappa}_{f}&:=\texttt{v}^{b_1}_{f(1)}\otimes\cdots\otimes
  \texttt{v}^{b_{\kappa-1}}_{f(\kappa-1)}\otimes
  T'_{(f(\kappa),f(\kappa+1))}\otimes
  \texttt{v}_{f(\kappa+2)}^{b_{\kappa+2}}\otimes\cdots\otimes
  \texttt{v}_{f(m+n)}^{b_{m+n}}.
       \end{align}
Recall that $T^{k}_{(f(\kappa),f(\kappa+1))}$  and
$T'^{k}_{(f(\kappa),f(\kappa+1))}$ are $\psi^{(k)}$-invariant
elements in $\mathbb{U}_k$ and $\mathbb{U}'_k$, respectively. Note
that $\{ U^{{\bf b},\kappa}_{f}\,|\,f\in\mathbb{Z}^{m+n}_k,\,
(f(\kappa),f(\kappa+1))\neq(-k,-k)\}$ and $\{ U'^{{\bf
b},\kappa}_{f}\,|\,f\in\mathbb{Z}^{m+n}_k,\,
(f(\kappa),f(\kappa+1))\neq(k,k)\}$ are bases of $\mathbb{U}^{{\bf
b},\kappa}_{k}$ and $\mathbb{U}'^{{\bf b},\kappa}_{k}$,
respectively.

  The following proposition follows from the characterization of canonical basis \cite[Theorem 27.3.2]{Lu} and the $\psi^{(k)}$-invariant property of the $\UU_k$-modules $\mathbb{U}_k$ and $\mathbb{U}'_k$ (cf. \cite[Lemma 5.7, Proposition 5.8]{CLW2}).

\begin{prop}\label{prop:canU} Let ${\bf b}=(b_1,b_2,\ldots,b_{m+n})$ be a $0^m1^n$-sequence with $b_\kappa=0$ and $b_{\kappa+1}=1$.
\begin{itemize}
  \item[(i)] For $f\in\Z^{m+n}_k$ with $(f(\kappa), f(\kappa+1))\not= (-k,-k)$, we have $T_f^{{\bf b},k}\in \mathbb{U}^{{\bf b},\kappa}_{k}$ and
  $$
  T_f^{{\bf b},k}=U^{{\bf b},\kappa}_{f}+\sum_{h\preceq_{\bf b}f,\, h\in\Z^{m+n}_k} \alpha_{hf} U^{{\bf b},\kappa}_{h}, \quad\mbox{where}\,\,\alpha_{hf}\in q\Z[q].
  $$
  In particular, $\{ T^{{\bf b},k}_{f}\,|\,f\in\mathbb{Z}^{m+n}_k,\,(f(\kappa),f(\kappa+1))\neq(-k,-k)\}$
  is basis of $\mathbb{U}^{{\bf b},\kappa}_{k}$.
  \item[(ii)] For $f\in\Z^{m+n}_k$ with $(f(\kappa), f(\kappa+1))\not= (k,k)$, we have $T_f^{s_\kappa{\bf b},k}\in \mathbb{U}'^{{\bf b},\kappa}_{k}$ and
  $$
T_f^{s_\kappa{\bf b},k}=U'^{{\bf b},\kappa}_{f}+\sum_{h\preceq_{s_\kappa{\bf b}}f,\, h\in\Z^{m+n}_k} \alpha'_{hf} U'^{{\bf b},\kappa}_{h}, \quad\mbox{where}\,\,\alpha'_{hf}\in q\Z[q].
  $$
  In particular, $\{ T_f^{s_\kappa{\bf b},k}\,|\,f\in\mathbb{Z}^{m+n}_k,\,(f(\kappa),f(\kappa+1))\neq(k,k)\}$
  is basis of $\mathbb{U}'^{{\bf b},\kappa}_{k}$.
\end{itemize}
 \end{prop}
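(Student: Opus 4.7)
The strategy is to apply Lusztig's characterization of canonical bases \cite[Theorem 27.3.2]{Lu} internally to $\mathbb{U}^{{\bf b},\kappa}_{k}$ and then use the uniqueness of the canonical basis in $\mathbb{T}^{{\bf b}}_{\leq|k|}$ to identify what is produced with the desired $T^{{\bf b},k}_f$. I will concentrate on part (i); part (ii) is entirely parallel, with $T^{(1,0)}$, $f^\uparrow$ of \eqnref{eq:updown}, and the exclusion $(f(\kappa),f(\kappa+1))\neq(k,k)$ playing the corresponding roles. The first task is to check that $\mathbb{U}^{{\bf b},\kappa}_{k}$ is a $\psi^{(k)}$-invariant $\UU_k$-submodule of $\mathbb{T}^{{\bf b}}_{\leq|k|}$. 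Its $\UU_k$-invariance is immediate from \lemref{canonical k}. For the bar-invariance, I would use the general principle that whenever $N\subseteq M$ is a bar-invariant $\UU_k$-submodule of a $\UU_k$-module $M$, then $A\otimes N\otimes B$ is bar-invariant in $A\otimes M\otimes B$ for any $\UU_k$-modules $A,B$; this follows from the associativity of the bar involution on triple tensor products together with the fact that each term in the relevant quasi-$\mc R$-matrix lies in $\UU_k\otimes\UU_k$, whose action preserves $A\otimes N\otimes B$.

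Next, I would make the triangularity of $\{U^{{\bf b},\kappa}_f\}$ with respect to $\{M^{\bf b}_f\}$ precise. Using the explicit formula $T^{(0,1)}_{(a,b)}=M^{(0,1)}_{(a,b)}+\delta_{a,b}\,q\,M^{(0,1)}_{(a-1,a-1)}$ recalled above, expansion of the middle tensor factor in \eqnref{def:U:ka} gives
\begin{equation*}
U^{{\bf b},\kappa}_f \;=\; M^{{\bf b}}_f \;+\; \delta_{f(\kappa),f(\kappa+1)}\,q\,M^{{\bf b}}_{f^{\downarrow\kappa}},
\end{equation*}
where $f^{\downarrow\kappa}$ agrees with $f$ outside $\{\kappa,\kappa+1\}$ and satisfies $f^{\downarrow\kappa}(\kappa)=f^{\downarrow\kappa}(\kappa+1)=f(\kappa)-1$. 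A direct computation with the weight functions $\mathrm{wt}^j_{\bf b}$ of \secref{Bruhat}---the contributions of positions $\kappa$ and $\kappa+1$ to $\mathrm{wt}_{\bf b}$ cancel because $b_\kappa=0$, $b_{\kappa+1}=1$ and both values shift by $-1$, while the step at $j=\kappa+1$ contributes a single positive simple root---verifies $f^{\downarrow\kappa}\prec_{\bf b} f$. Running Lusztig's inductive procedure inside the $\psi^{(k)}$-invariant subspace $\mathbb{U}^{{\bf b},\kappa}_{k}$ using this unitriangular basis then produces, for each admissible $f$, a unique $\psi^{(k)}$-invariant element $\widetilde{T}_f = U^{{\bf b},\kappa}_f + \sum_{h\prec_{\bf b}f}\alpha_{hf}(q)\,U^{{\bf b},\kappa}_h$ with $\alpha_{hf}(q)\in q\mathbb Z[q]$. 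Re-expanding $\widetilde{T}_f$ into the $M^{\bf b}$-basis yields $\widetilde{T}_f = M^{\bf b}_f + \sum_{g\prec_{\bf b}f} c_{gf}(q)M^{\bf b}_g$ with $c_{gf}(q)\in q\mathbb Z[q]$; the uniqueness of the canonical basis of $\mathbb{T}^{{\bf b}}_{\leq|k|}$ then forces $\widetilde{T}_f = T^{{\bf b},k}_f$, proving simultaneously that $T^{{\bf b},k}_f\in \mathbb{U}^{{\bf b},\kappa}_{k}$ and that it admits the claimed $U^{{\bf b},\kappa}$-expansion. A dimension count---the number of admissible $f$ equals $(2k+1)^{m+n}-(2k+1)^{m+n-2}=\dim\mathbb{U}^{{\bf b},\kappa}_{k}$---then confirms that these elements form a basis.

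The most delicate point will be the bar-invariance of $\mathbb{U}^{{\bf b},\kappa}_{k}$, since $\psi^{(k)}$ on $\mathbb{T}^{{\bf b}}_{\leq|k|}$ is emphatically not a tensor product of individual bar involutions but is built from the full quasi-$\mc R$-matrix across all tensor factors; the necessary compatibility is of the same flavor as the computations in \cite[Lemma 5.7, Proposition 5.8]{CLW2}. Once that is settled, the remainder is a formal application of Lusztig's characterization.
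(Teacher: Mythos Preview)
Your proposal is correct and follows essentially the same route the paper indicates: the paper does not give a detailed argument but simply states that the proposition follows from Lusztig's characterization \cite[Theorem 27.3.2]{Lu} together with the $\psi^{(k)}$-invariance of the $\UU_k$-submodules $\mathbb{U}_k$ and $\mathbb{U}'_k$, pointing to \cite[Lemma 5.7, Proposition 5.8]{CLW2} for the analogous computations. Your write-up fleshes out exactly these ingredients---the bar-invariance of $\mathbb{U}^{{\bf b},\kappa}_k$ as a tensor factor, the unitriangularity $U^{{\bf b},\kappa}_f=M^{\bf b}_f+\delta_{f(\kappa),f(\kappa+1)}qM^{\bf b}_{f^{\downarrow\kappa}}$ with $f^{\downarrow\kappa}\prec_{\bf b}f$, the Lusztig-type induction inside the invariant subspace, and the identification with $T^{{\bf b},k}_f$ by uniqueness---so there is no substantive divergence.
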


For $f\in\mathbb{Z}^{m+n}$ and $\kappa\in \lbr m+n-1\rbr$ with
$f(\kappa)=f(\kappa+1)$, we define $f^{\uparrow, \kappa}\in
\mathbb{Z}^{m+n}$ by
  \begin{equation*}
f^{\uparrow, \kappa}(j):=\left\{
                            \begin{array}{ll}
                              f(j), & \hbox{if \,$j\not=\kappa,\kappa+1$,} \\
                                                  f^{\uparrow}(\kappa)+1, & \hbox{if \,$j=\kappa,\kappa+1$.}
                            \end{array}
                          \right.
  \end{equation*}
  For $f\in\mathbb{Z}^{m+n}$ and $\kappa\in \lbr m+n-1\rbr$ with
$f(\kappa)\not=f(\kappa+1)$, we define $f\cdot\tau_\kappa\in \Z^{m+n}$ by
  \begin{equation*}
f\cdot\tau_\kappa(j):=\left\{
                            \begin{array}{ll}
                              f(j), & \hbox{if \,$j\not=\kappa,\kappa+1$,} \\
                                                  f(\kappa+1), & \hbox{if \,$j=\kappa$,}\\
                                                  f(\kappa), & \hbox{if \,$j=\kappa+1$.}
                            \end{array}
                          \right.
  \end{equation*}
For the $0^m1^n$ sequence ${\bf b}=({\bf b}^1,0,1,{\bf b}^2)$  such
that ${\bf b}^1$ and ${\bf b}^2$ are respectively
$0^{m_1}1^{n_1}$-sequence and $0^{m_2}1^{n_2}$-sequence,   we let $
{\mathcal{R}}^{(k)}_\kappa:=1^{{\bf
b}^1}_{k}\otimes{\mathcal{R}}^{(k)}\otimes1^{{\bf b}^2}_{k}$,
where $\kappa:=m_1+n_1+1$, $1^{{\bf b}^1}_{k}$ and $1^{{\bf
b}^2}_{k}$ are identity maps on ${\mathbb{T}^{{\bf b}^1}_{\leq|k|}}$
and ${\mathbb{T}^{{\bf b}^2}_{\leq|k|}}$, respectively. The linear
map
$$
 {\mathcal{R}}^{(k)}_\kappa\, :\, \mathbb{U}'^{{\bf b},\kappa}_{k}\longrightarrow\mathbb{U}^{{\bf b},\kappa}_{k}
$$
 is an isomorphism of  $\UU_k$-modules since $1^{{\bf b}^1}_{k}$, ${\mathcal{R}}^{(k)}$ and $1^{{\bf b}^2}_{k}$ are  isomorphisms of  $\UU_k$-modules. By Proposition~\ref{lem:Rk}, we have
    \begin{equation}\label{U}
{\mathcal{R}}^{(k)}_{\kappa}(U'^{{\bf b},\kappa}_{f})=
  U^{{\bf b},\kappa}_{s_\kappa{f} }\quad \hbox{for $f\in\Z^{m+n}_k$ with $(f(\kappa), f(\kappa+1))\not= (k,k)$,}
\end{equation}
where $s_\kappa : \Z^{m+n}\longrightarrow \Z^{m+n}$ is the bijection defined by
 \begin{equation}\label{sf}
s_\kappa{f}:=
    \begin{cases}
f^{\uparrow, \kappa} & \mbox{if}\ f(\kappa)=f(\kappa+1),\\
f\cdot\tau_\kappa & \mbox{if}\ f(\kappa)\not=f(\kappa+1).
  \end{cases}
\end{equation}
 Since $\mathbb{U}^{{\bf
b},\kappa}_{k}$ and $\mathbb{U}'^{{\bf b},\kappa}_{k}$
 are $\psi^{(k)}$-invariant $\UU_k$-submodules of $\mathbb{T}^{{\bf b}}_{\leq |k|}$ and
 $\mathbb{T}^{s_\kappa{\bf b}}_{\leq |k|}$, and the isomorphisms $1^{{\bf b}^1}_{k}$,
 ${\mathcal{R}}^{(k)}$ and $1^{{\bf b}^2}_{k}$ commute with $\psi^{(k)}$, we have
 ${\mathcal{R}}^{(k)}_\kappa$ commuting with $\psi^{(k)}$.

Summarizing the above, we have the first part of the following.

\begin{prop}  \label{tilde(R)} For a $0^m1^n$-sequence ${\bf b}=(b_1,b_2,\ldots,b_{m+n})$  with $b_\kappa=0$ and $b_{\kappa+1}=1$,
the linear map
$$
 {\mathcal{R}}^{(k)}_\kappa\, :\, \mathbb{U}'^{{\bf b},\kappa}_{k}\longrightarrow\mathbb{U}^{{\bf b},\kappa}_{k}
$$
 is an isomorphism of  $\UU_k$-modules such that
     \begin{equation*}
{\mathcal{R}}^{(k)}_{\kappa}(U'^{{\bf b},\kappa}_{f})=
  U^{{\bf b},\kappa}_{s_\kappa{f} }\quad \hbox{for $f\in\Z^{m+n}_k$ with $(f(\kappa), f(\kappa+1))\not= (k,k)$}.
\end{equation*}
 Also $ {\mathcal{R}}^{(k)}_\kappa$ commutes with $\psi^{(k)}$. Moreover,  we have
\begin{equation}\label{T:k}
{\mathcal{R}}^{(k)}_{\kappa}(T^{s_\kappa{\bf b},k}_{f})=  T^{{\bf b},k}_{s_\kappa f}, \quad \hbox{ for }\, f\in\Z^{m+n}_k\,\, \hbox{ with }\, (f(\kappa),f(\kappa+1))\neq (k,k),
\end{equation}
and
\begin{equation}\label{T}
{\mathcal{R}}^{(k)}_{\kappa}(T^{s_\kappa{\bf b}}_{f})=  T^{{\bf b}}_{s_\kappa f}, \quad \hbox{ if }\, T^{s_\kappa{\bf b}}_{f}\in
 \mathbb{T}^{s_\kappa{\bf b}}_{\leq |k|}.
\end{equation}
\end{prop}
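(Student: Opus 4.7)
The first part of the proposition---the isomorphism, the formula for ${\mathcal{R}}^{(k)}_\kappa(U'^{{\bf b},\kappa}_f)$, and the commutation with $\psi^{(k)}$---is the direct tensor product of Proposition~\ref{lem:Rk} with identity maps, so what remains is to establish \eqref{T:k} and \eqref{T}. My plan is to prove \eqref{T:k} by combining the characterization of the canonical basis in Proposition~\ref{prop:canU} with a combinatorial compatibility of $s_\kappa$ with the Bruhat orderings, and then deduce \eqref{T} from \eqref{T:k} by a stability argument across truncation levels.

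For \eqref{T:k}, I first expand $T^{s_\kappa{\bf b},k}_f$ via Proposition~\ref{prop:canU}(ii) as
$$
T^{s_\kappa{\bf b},k}_f \;=\; U'^{{\bf b},\kappa}_f \;+\; \sum_{h\prec_{s_\kappa{\bf b}}f} \alpha'_{hf}\,U'^{{\bf b},\kappa}_h,\qquad \alpha'_{hf}\in q\Z[q],
$$
apply ${\mathcal{R}}^{(k)}_\kappa$, and use \eqref{U} to get
$$
{\mathcal{R}}^{(k)}_\kappa(T^{s_\kappa{\bf b},k}_f) \;=\; U^{{\bf b},\kappa}_{s_\kappa f} \;+\; \sum_{h\prec_{s_\kappa{\bf b}}f} \alpha'_{hf}\,U^{{\bf b},\kappa}_{s_\kappa h}.
$$
Because ${\mathcal{R}}^{(k)}_\kappa$ commutes with $\psi^{(k)}$ and $T^{s_\kappa{\bf b},k}_f$ is $\psi^{(k)}$-invariant, this image is a $\psi^{(k)}$-invariant element of $\mathbb{U}^{{\bf b},\kappa}_k$ whose $U^{{\bf b},\kappa}$-expansion has leading term $U^{{\bf b},\kappa}_{s_\kappa f}$ and all other coefficients in $q\Z[q]$. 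By the uniqueness part of Proposition~\ref{prop:canU}(i), it then equals $T^{{\bf b},k}_{s_\kappa f}$ as soon as the triangularity $s_\kappa h \preceq_{\bf b} s_\kappa f$ is verified for every $h$ with $h\preceq_{s_\kappa{\bf b}} f$.

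This monotonicity of the bijection $s_\kappa:\Z^{m+n}\to\Z^{m+n}$ relative to the two Bruhat orderings $\preceq_{s_\kappa{\bf b}}$ and $\preceq_{\bf b}$ is the hard part. I would prove it from the counting-function characterization of $\preceq_{\bf b}$ recalled in the proof of Lemma~\ref{weakly dominant}: since ${\bf b}$ and $s_\kappa{\bf b}$ differ only at positions $\kappa,\kappa+1$ and $s_\kappa$ only alters $g$ at those positions, the comparison of $\sharp_{\bf b}(s_\kappa g,a,j)$ with $\sharp_{s_\kappa{\bf b}}(g,a,j)$ reduces to a short case analysis according to whether $h(\kappa)=h(\kappa+1)$ and whether $f(\kappa)=f(\kappa+1)$, with the only nontrivial check occurring at $j=\kappa+1$, where $s_\kappa$ has a genuine effect on both the sequence and the function. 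With this combinatorial lemma in hand, Proposition~\ref{prop:canU}(i) yields \eqref{T:k}.

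Finally, for \eqref{T}, assume $T^{s_\kappa{\bf b}}_f\in\mathbb{T}^{s_\kappa{\bf b}}_{\leq|k|}$, so that $T^{s_\kappa{\bf b},l}_f = T^{s_\kappa{\bf b}}_f$ for every $l\geq k$. Applying \eqref{T:k} at level $l$ gives
$$
{\mathcal{R}}^{(l)}_\kappa(T^{s_\kappa{\bf b}}_f) \;=\; T^{{\bf b},l}_{s_\kappa f} \;=\; \pi_l(T^{{\bf b}}_{s_\kappa f}).
$$
A direct inspection of the formulas for ${\mathcal{R}}^{(l)}$ in Proposition~\ref{lem:Rk} shows that any element of $\mathbb{T}^{s_\kappa{\bf b}}_{\leq|k|}$ is sent by ${\mathcal{R}}^{(l)}_\kappa$ into $\mathbb{T}^{{\bf b}}_{\leq|k|}$, uniformly in $l\geq k$; this is a case-by-case check on the basis elements $T'^l_g$ entering $T^{s_\kappa{\bf b}}_f$. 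Hence $\pi_l(T^{{\bf b}}_{s_\kappa f})\in\mathbb{T}^{{\bf b}}_{\leq|k|}$ for all $l\geq k$, which forces $T^{{\bf b}}_{s_\kappa f}\in\mathbb{T}^{{\bf b}}_{\leq|k|}$; then $\pi_k(T^{{\bf b}}_{s_\kappa f})=T^{{\bf b}}_{s_\kappa f}$, and \eqref{T} follows from \eqref{T:k} at level $k$.
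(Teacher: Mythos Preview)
Your overall architecture is right, but the ``hard part'' you isolate for \eqref{T:k}---the claim that $h\preceq_{s_\kappa{\bf b}} f$ implies $s_\kappa h\preceq_{\bf b} s_\kappa f$---is \emph{false}, so the proposed case analysis via the counting functions $\sharp_{\bf b}$ cannot succeed. Already for $m=n=1$ with $\kappa=1$, take $f=(0,0)$ and $h=(1,1)$: one checks $h\prec_{(1,0)}f$ (since $\text{wt}^2_{(1,0)}(h)=\varepsilon_1\le\varepsilon_0=\text{wt}^2_{(1,0)}(f)$), while $s_1 f=(1,1)$, $s_1 h=(2,2)$ and $\text{wt}^2_{(0,1)}(s_1 h)=-\varepsilon_2\not\le-\varepsilon_1=\text{wt}^2_{(0,1)}(s_1 f)$, so $s_1 h\npreceq_{(0,1)} s_1 f$; in fact the order is reversed. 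Embedding this pair in positions $\kappa,\kappa+1$ of any larger ${\bf b}$ produces counterexamples in general. Thus your appeal to a Bruhat-triangular ``uniqueness'' in Proposition~\ref{prop:canU}(i) does not go through.

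The paper's argument bypasses this entirely. From Proposition~\ref{prop:canU}(i) the transition matrix between $\{U^{{\bf b},\kappa}_g\}$ and $\{T^{{\bf b},k}_g\}$ is the identity modulo $q$ over $\Z[q]$, so after applying ${\mathcal{R}}^{(k)}_\kappa$ you may write ${\mathcal{R}}^{(k)}_\kappa(T^{s_\kappa{\bf b},k}_f)=\sum_g n_g\,T^{{\bf b},k}_g$ with all $n_g\in\Z[q]$, $n_{s_\kappa f}\in 1+q\Z[q]$, and $n_g\in q\Z[q]$ for $g\neq s_\kappa f$. Since the $T^{{\bf b},k}_g$ are $\psi^{(k)}$-invariant and so is the image, each $n_g=\overline{n_g}$ lies in $\Z$; hence $n_{s_\kappa f}=1$ and $n_g=0$ otherwise. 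No comparison of $s_\kappa h$ with $s_\kappa f$ in $\preceq_{\bf b}$ is ever needed. (A posteriori this does show $s_\kappa h\preceq_{\bf b} s_\kappa f$ whenever $\alpha'_{hf}\neq 0$, but that cannot be used as input.)

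Your argument for \eqref{T} is essentially correct and matches the paper's. One phrasing issue: the assertion that ${\mathcal{R}}^{(l)}_\kappa$ sends \emph{every} element of $\mathbb{T}^{s_\kappa{\bf b}}_{\leq|k|}$ into $\mathbb{T}^{{\bf b}}_{\leq|k|}$ is not literally true (e.g.\ $M'_{(k,k)}$ at positions $\kappa,\kappa+1$ is not handled this way). What you actually need, and what the paper uses, is that $T^{s_\kappa{\bf b}}_f=T^{s_\kappa{\bf b},k}_f\in\mathbb{U}'^{{\bf b},\kappa}_k$ by Proposition~\ref{prop:canU}(ii), and that ${\mathcal{R}}^{(l)}_\kappa$ restricted to $\mathbb{U}'^{{\bf b},\kappa}_k$ coincides with ${\mathcal{R}}^{(k)}_\kappa$; then the image lies in $\mathbb{U}^{{\bf b},\kappa}_k\subseteq\mathbb{T}^{{\bf b}}_{\leq|k|}$ uniformly in $l$, and the conclusion follows as you indicate.
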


\begin{proof}
First we show \eqnref{T:k} holds. For $f\in\Z^{m+n}_k$ with $f(\kappa)=f(\kappa+1)\neq k$, we have
\begin{equation}\label{RT}
{\mathcal{R}}^{(k)}_{\kappa}(T^{s_\kappa{\bf b},k}_{f})=U^{{\bf b},\kappa}_{f}+\sum_{ h\in\Z^{m+n}_k} \alpha'_{hf} U^{{\bf b},\kappa}_{h}, \quad\mbox{where}\,\,\alpha'_{hf}\in q\Z[q],
\end{equation}
by Proposition~\ref{prop:canU}. Since $\{ T^{{\bf b},k}_{f}\,|\,f\in\mathbb{Z}^{m+n}_k,\,(f(\kappa),f(\kappa+1))\neq(-k,-k)\}$ is a basis of
$\mathbb{U}^{{\bf b},\kappa}_{k}$,  we can write ${\mathcal{R}}^{(k)}_{\kappa}(T^{s_\kappa{\bf b},k}_{f})=\sum_{g\in \Z^{m+n}_k} n_g  T^{{\bf b},k}_{g}$
 with $n_g\in \Q(q)$. By \eqnref{RT} and  Proposition~\ref{prop:canU}, we have $n_g\in q\Z[q]$ for all $g\in \Z^{m+n}_k$ with $g\not={f^{\uparrow, \kappa} } $
 and $n_{f^{\uparrow, \kappa} }\in 1+q\Z[q] $. By the first part of the proposition, ${\mathcal{R}}^{(k)}_{\kappa}(T^{s_\kappa{\bf b},k}_{f})$ is a $\psi^{(k)}$-invariant element in $\mathbb{U}^{{\bf b},\kappa}_{k}\subseteq \mathbb{T}^{{\bf b}}_{\leq |k|}$. Therefore
 $n_g$ are contained in $\Z$ for all $g\in \Z^{m+n}_k$.
Hence ${\mathcal{R}}^{(k)}_{\kappa}(T^{s_\kappa{\bf b},k}_{f})= T^{{\bf b},k}_{f^{\uparrow, \kappa} }$.
The case for $f\in\Z^{m+n}_k$ with $ f(\kappa)\not=f(\kappa+1)$ is similar.

For $T^{s_\kappa{\bf b}}_{f}\in  \mathbb{T}^{s_\kappa{\bf b}}_{\leq |k|}$, we have $T^{s_\kappa{\bf b}}_{f}=T^{s_\kappa{\bf b},k}_{f}$. By Proposition~\ref{prop:can-finite}, we may assume that $T^{{\bf b}}_{s_\kappa f}=T^{{\bf b},l}_{s_\kappa f}$ for some $l\ge k$. By Proposition~\ref{prop:canU},  we have $T^{{\bf b}}_{s_\kappa f}\in \mathbb{U}^{{\bf b},\kappa}_{k}$ and
$$
({\mathcal{R}}^{(l)}_{\kappa})^{-1}(T^{{\bf b},l}_{s_\kappa f})
=T^{s_\kappa{\bf b},l}_{f}=T^{s_\kappa{\bf b},k}_{f}.
$$
This implies $T^{{\bf b},l}_{s_\kappa f}\in \mathbb{T}^{s_\kappa{\bf b}}_{\leq |k|}$. Hence
${\mathcal{R}}^{(k)}_{\kappa}(T^{s_\kappa{\bf b}}_{f})= T^{{\bf b},l}_{s_\kappa f} =T^{{\bf b}}_{s_\kappa f}$.
 \end{proof}

Now we explain a method to compute the elements in the canonical
basis of $\widehat{\mathbb{T}}^{{\bf b}}$ from  knowing the precise
expressions of the elements in the canonical basis of
$\widehat{\mathbb{T}}^{{\bf b}_\text{st}}$ in terms of standard
monomial basis.
 Recall that ${\bf b}_{\mathrm{st}}$ is the  $0^m1^n$-sequence
$(1,\ldots,1,0,\ldots,0)$. For ${\bf
b}\not={\bf b}_{\mathrm{st}}$, let $
1\leq i_1<i_2<\ldots<i_n\leq m+n $  be integers such that
$b_{i_1}=b_{i_2}=\cdots=b_{i_n}=1$ and let $l$ be the smallest
number in $\lbr n\rbr$ such that $i_l\not=l$. We have
\begin{equation}\label{sb}
 {\bf b}_{\mathrm{st}}=s_{n}s_{n+1}\cdots s_{i_n-1}\cdots
 s_{l+1}s_{l+2}\cdots  s_{i_{l+1}-1}s_{l}s_{l+1}\cdots  s_{i_{l}-1} {\bf b}.
\end{equation}
Now we want to compute $T_f^{{\bf b}}$. Let
$$
g=(s_{i_{l}-1}\cdots s_{l+1}s_{l}\cdots s_{i_{n-1}-1}\cdots
  s_{n}s_{n-1}s_{i_n-1}\cdots s_{n+1}s_{n})^{-1}(f)\in\Z^{m+n}.
$$
Recall that $s_\kappa h$ is defined for any $h\in\Z^{m+n}$ in \eqnref{sf}. By Proposition~\ref{prop:can-finite}, we may assume that
 $T_g^{{\bf  b}_{\mathrm{st}},k}\in  {\mathbb{T}}^{{\bf b}_{\mathrm{st}}}_{|k|}$ for some positive integer $k$. We shall drop the superscripts of ${\mathcal{R}}^{(k)}_{\kappa}$'s. By Proposition~ \ref{tilde(R)}, we have
 $$T_f^{{\bf b}}={\mathcal{R}}_{i_{l}-1}\cdots{\mathcal{R}}_{l+1}{\mathcal{R}}_l
\cdots{\mathcal{R}}_{i_{n-1}-1}\cdots
{\mathcal{R}}_n{\mathcal{R}}_{n-1}{\mathcal{R}}_{i_n-1}\cdots
{\mathcal{R}}_{n+1}{\mathcal{R}}_n
(T^{\bf{b}_{\mathrm{st}}}_g).$$
Therefore we can compute $T_f^{{\bf b}}$ step by step applying ${\mathcal{R}}_\kappa$ for some $\kappa$ to an element, say $T^{{\bf c}}_h$, in the canonical basis obtained from the previous step. The element $T^{{\bf c}}_h$ is contained in $\mathbb{U}'^{s^{-1}_\kappa{\bf c},\kappa}_{k}$ by Proposition~\ref{prop:canU}. Since $T^{{\bf c}}_h$ can be written as a linear combination of the elements of the form $U'^{s^{-1}_\kappa{\bf c},\kappa}_{h}$'s (recall that  $U'^{s^{-1}_\kappa{\bf c},\kappa}_{h}$ is defined in \eqnref{def:U':ka}),  $ T_{s_\kappa h}^{s^{-1}_\kappa{\bf c}}={\mathcal{R}}_\kappa(T_h^{{\bf c}})$ can be computed easily by Proposition~\ref{tilde(R)}. Repeating this processes, we have a procedure to compute $T_f^{{\bf b}}$.

Now we give an  example to  illustrate the  algorithm.
\begin{example} This example is based on the example given by Brundan \cite[Example 2.27]{Br}.
Let  ${\bf b}=(1,0,1,0,1,0)$ and
$f=(0,0,4,2,1,3)\in\mathbb{Z}^{3+3}$. Recall that $f\in\Z^{r}$ is identified with the
$r$-tuple $\left(f(1),f(2),\ldots,f(r)\right)$. We want to compute $T^{\bf b}_f$. By \eqnref{sb},  we have  ${\bf b}_{\mathrm{st}}=s_3s_4s_2{\bf b}$, where ${\bf b}_{\mathrm{st}}=(1,1,1,0,0,0)$.
Let $g=(s_2s_4s_3)^{-1}(f)$. We have
$$
g=s_3^{-1}s_4^{-1}s_2^{-1}(f)=s_3^{-1}s_4^{-1}(0,4,0,2,1,3)=s_3^{-1}(0,4,0,1,2,3)=(0,4,1,0,2,3).
$$
$ T^{{\bf b}_{\mathrm{st}}}_g$ has been computed in  \cite[Example 2.27]{Br} and
\begin{equation*}\begin{split}
  T^{{\bf b}_{\mathrm{st}}}_g=& M^{{\bf b}_{\mathrm{st}}}_{(0,4,1,0,2,3)}+qM^{{\bf b}_{\mathrm{st}}}_{(4,0,1,0,2,3)}+q(M^{{\bf b}_{\mathrm{st}}}_{(1,4,0,0,2,3)}+qM^{{\bf b}_{\mathrm{st}}}_{(1,4,1,1,2,3)})\\
  &+q^2(M^{{\bf b}_{\mathrm{st}}}_{(4,1,0,0,2,3)}+qM^{{\bf b}_{\mathrm{st}}}_{(4,1,1,1,2,3)}).
\end{split}\end{equation*}
Therefore
$ T^{{\bf b}_{\mathrm{st}}}_g\in \mathbb{T}_{\leq |4|}^{{\bf b}_{\mathrm{st}}}$ and $T^{\bf
b}_f={\mathcal{R}}^{(4)}_2{\mathcal{R}}^{(4)}_4{\mathcal{R}}^{(4)}_3(T^{{\bf b}_{\mathrm{st}}}_g)$. Let $\bf{c}=s^{-1}_3{\bf b}_{\mathrm{st}}=(1,1,0,1,0,0)$. Note that $ T^{{\bf b}_{\mathrm{st}}}_g$ has been written as  a linear combination of the elements of the form $U'^{{\bf c},3}_{h}$'s. Then
\begin{equation*}\begin{split}
T^{\bf c}_{s_3 g}=&{\mathcal{R}}^{(4)}_3(T^{{\bf b}_{\mathrm{st}}}_g)\\
=& M^{{\bf c}}_{(0,4,0,1,2,3)}+qM^{{\bf c}}_{(4,0,0,1,2,3)}+q(qM^{{\bf c}}_{(1,4,0,0,2,3)}+M^{{\bf c}}_{(1,4,1,1,2,3)})\\
  &+q^2(qM^{{\bf c}}_{(4,1,0,0,2,3)}+M^{{\bf c}}_{(4,1,1,1,2,3)})\\
=& M^{{\bf c}}_{(0,4,0,1,2,3)}+qM^{{\bf c}}_{(4,0,0,1,2,3)}+q^2M^{{\bf c}}_{(1,4,0,0,2,3)}+qM^{{\bf c}}_{(1,4,1,1,2,3)}\\
  &+q^3M^{{\bf c}}_{(4,1,0,0,2,3)}+q^2M^{{\bf c}}_{(4,1,1,1,2,3)} .
\end{split}\end{equation*}
Now we let ${\bf c}_1=s_4^{-1}{\bf c}=(1,1,0,0,1,0)$. Note that $T^{\bf c}_{s_3 g}$ has been written as  a linear combination of the elements of the form $U'^{{\bf c}_1,4}_{h}$'s. We have
\begin{equation*}\begin{split}
T^{{\bf c}_1}_{s_4 s_3 g}=&{\mathcal{R}}^{(4)}_4{\mathcal{R}}^{(4)}_3(T^{{\bf b}_{\mathrm{st}}}_g)\\
=& M^{{\bf c}_1}_{(0,4,0,2,1,3)}+qM^{{\bf c}_1}_{(4,0,0,2,1,3)}+q^2M^{{\bf c}_1}_{(1,4,0,2,0,3)}+qM^{{\bf c}_1}_{(1,4,1,2,1,3)}\\
  &+q^3M^{{\bf c}_1}_{(4,1,0,2,0,3)}+q^2M^{{\bf c}_1}_{(4,1,1,2,1,3)} \\
  =& M^{{\bf c}_1}_{(0,4,0,2,1,3)}+q^2M^{{\bf c}_1}_{(1,4,0,2,0,3)}+qM^{{\bf c}_1}_{(1,4,1,2,1,3)}+q^3M^{{\bf c}_1}_{(4,1,0,2,0,3)}\\
  &+q(M^{{\bf c}_1}_{(4,0,0,2,1,3)}+qM^{{\bf c}_1}_{(4,1,1,2,1,3)} ).
\end{split}\end{equation*}
Similarly, we have
\begin{equation*}\begin{split}
T^{{\bf b}}_f=&{\mathcal{R}}^{(4)}_2{\mathcal{R}}^{(4)}_4{\mathcal{R}}^{(4)}_3(T^{{\bf b}_{\mathrm{st}}}_g)\\
  =& M^{{\bf b}}_{(0,0,4,2,1,3)}+q^2M^{{\bf b}}_{(1,0,4,2,0,3)}+qM^{{\bf b}}_{(1,1,4,2,1,3)}+q^3M^{{\bf b}}_{(4,0,1,2,0,3)}\\
  &+q(qM^{{\bf b}}_{(4,0,0,2,1,3)}+M^{{\bf b}}_{(4,1,1,2,1,3)} ).
\end{split}\end{equation*}
\end{example}

{\bf Acknowledgments:} The authors thank Shun-Jen Cheng for discussions.
The first author also thanks the Institute of Mathematics of Academia Sinica in Taiwan for the hospitality and support.

\end{document}